\documentclass[12pt]{amsart}

   \topmargin = 0 in
   \headsep = .1 in
   \textwidth = 6.2 in
   \textheight = 8.9 in
   \baselineskip = .16666 in
   \oddsidemargin = 0 in
   \evensidemargin = 0 in

\newtheorem{theorem}{Theorem}[section] 
\newtheorem{corollary}[theorem]{Corollary} 
 
\newtheorem{lemma}[theorem]{Lemma} 
\newtheorem{proposition}[theorem]{Proposition}

\theoremstyle{definition}
\newtheorem{definition}{Definition}

\theoremstyle{remark}

%
  {\end{list}}


\usepackage{amssymb} 
\usepackage{verbatim}

\newcommand{\ord}{\operatorname{ord}}

\renewcommand{\deg}{{\operatorname{deg}}}

\newcommand{\GL}{\operatorname{GL}}
\newcommand{\PGL}{\operatorname{PGL}}

\newcommand{\Res}{\operatorname{Res}}


\newcommand{\Ann}{\operatorname{Ann}}

\newcommand{\diam}{\operatorname{diam}}

\newcommand{\Lip}{\operatorname{Lip}}
\newcommand{\BPP}{\operatorname{{\bf P}}}
\newcommand{\BHH}{\operatorname{{\bf H}}}
\newcommand{\BAA}{\operatorname{{\bf A}}}

\def\ord{{\mathop{\rm ord}}}
\def\ordRes{{\mathop{\rm ordRes}}}

\def\deg{{\mathop{\rm deg}}}

\def\GL{{\mathop{\rm GL}}}
\def\PGL{{\mathop{\rm PGL}}}

\def\Berk{{\mathop{\rm Berk}}}
\def\Res{{\mathop{\rm Res}}}

\def\GPR{{\mathop{\rm GPR}}}
\def\RP{{\mathop{\rm RP}}}
\def\GIR{{\mathop{\rm GIR}}}
\def\Rat{{\mathop{\rm Rat}}}

\def\vv{{\vec{v}}}
\def\vw{{\vec{w}}}

\def\v1{{\vec{1}}}
\def\vbb1{\vec{{\mathbf 1}}}

\def\BF1{{\mathbf 1}}

\def\AA{{\mathbb A}}

\def\CC{{\mathbb C}}

\def\HH{{\mathbb H}}

\def\PP{{\mathbb P}}

\def\RR{{\mathbb R}}

\def\ZZ{{\mathbb Z}}

\def\cB{{\mathcal B}}

\def\cD{{\mathcal D}}

\def\cI{{\mathcal I}}

\def\cO{{\mathcal O}}

\def\hbar{{\overline{h}}}

\def\fM{{\mathfrak m}}

\def\tk{{\widetilde{k}}}

\def\tA{{\widetilde{A}}}
\def\tB{{\widetilde{B}}}

\def\tF{{\widetilde{F}}}
\def\tG{{\widetilde{G}}}

\def\tX{{\widetilde{X}}}
\def\tY{{\widetilde{Y}}}

\def\tphi{{\widetilde{\varphi}}}


\def\Berk{{\rm Berk}}


\def\<{{\langle }}
\def\>{{\rangle }}

\def\<<{{\langle \! \langle}}
\def\>>{{\rangle \! \rangle}} 

\def\({(\!(}
\def\){)\!)}

\def\[{[\![}
\def\]{]\!]}





\DeclareMathSymbol{\varnothing} {\mathord}{AMSb}{"3F} 
 
\theoremstyle{definition} 
\theoremstyle{remark} 

\begin{document}
\title{The Lipschitz Constant of a Nonarchimedean Rational Function}

\author{Robert Rumely}
\address{Robert Rumely\\ 
Department of Mathematics\\
University of Georgia\\
Athens, Georgia 30602\\
USA}
\email{rr@math.uga.edu}

\author{Stephen Winburn}
\address{Stephen Winburn\\ 
Ally Corporation\\
440 S. Church Street\\
Charlotte, N.C. 28202\\
USA}
\email{Stephen.Winburn@ally.com}

\date{December 3, 2015}
\subjclass[2010]{Primary  37P50, 11S82; 
Secondary  37P05} 
\keywords{Lipschitz Constant, Strong metric} 

\begin{abstract}
Let $K$ be a complete, algebraically closed nonarchimedean valued field, 
and let $\varphi(z) \in K(z)$ have degree $d \ge 1$.  
We provide explicit bounds for the Lipschitz constants $\Lip_\Berk(\varphi)$, $\Lip_{\PP^1(K)}(\varphi)$,  
in terms of algebraic and geometric invariants of $\varphi$.
\end{abstract} 

\maketitle

Let $K$ be a complete, algebraically closed nonarchimedean valued field with absolute value $| \cdot |$
and associated valuation $\ord(\cdot)$. 
Write $\cO$ for the ring of integers of $K$, $\cO^\times$ for its group of units, 
$\fM$ for its maximal ideal, and $\tk$ for its residue field.

Let $\varphi(z)  \in K(z)$  be a rational function with $\deg(\varphi) = d \ge 1$.  
The action of $\varphi$ on $\PP^1(K)$ 
extends canonically to an action on Berkovich projective line $\BPP^1_K$.   
In \cite{F-RL2} Favre and Rivera-Letelier define a metric $d(x,y)$ on $\BPP^1_K$, which induces the 
strong topology on $\BPP^1_K$;  if $\|x,y\|$ denotes the spherical metric on $\PP^1(K)$, then 
$d(x,y)$ restricts to $2 \|x,y\|$ on $\PP^1(K)$.  
The Lipschitz constant of $\varphi$ with respect to $d(x,y)$ is  
\begin{equation*}
\Lip_\Berk(\varphi) \ := \ \sup_{\substack{ x,y \in\, \BPP^1_K \\ x \ne y }} \frac{d(\varphi(x),\varphi(y))}{d(x,y)} \ .
\end{equation*}
It is the only inexplicit term in Favre and Rivera-Letelier's quantitative equidistribution theorem 
for dynamical small points (\cite{F-RL2}, Theorem 7). 
One may also be interested in the Lipschitz constant of $\varphi$ on classical points,  
\begin{equation*}
\Lip_{\PP^1(K)}(\varphi) 
\ := \ \sup_{\substack{ x,y \in \PP^1(K) \\ x \ne y }} \frac{\|\varphi(x),\varphi(y)\|}{\|x,y\|} \ .
\end{equation*} 

The purpose of this paper is to bound $\Lip_{\PP^1(K)}(\varphi)$ and $\Lip_\Berk(\varphi)$ 
using algebraic and geometric $\GL_2(\cO)$-invariants of $\varphi$. Let $(F,G)$
be a {\em normalized representation} for $\varphi$, a pair of homogeneous polynomials $F(X,Y), G(X,Y) \in \cO[X,Y]$ 
of degree $d$, with at least one coefficient of $F$ or $G$ in $\cO^{\times}$, 
such that $[F:G]$ gives the action of $\varphi$ on $\PP^1(K)$.  
The absolute value of the resultant $|\Res(\varphi)| := |\Res(F,G)|$
is independent of the choice of normalized representation.  

\begin{theorem} \label{FirstCor} 
Let $K$ be a complete, algebraically closed nonarchimedean field,  
and let $\varphi(z) \in K(z)$ have degree $d \ge 1$.  Then
\begin{equation} \label{FirstCorBound} 
\Lip_{\PP^1(K)}(\varphi)  \le  \frac{1}{|\Res(\varphi)|} \ , \quad 
\Lip_\Berk(\varphi)  \le  \max\Big( \frac{d}{|\Res(\varphi)|} \, , \frac{1}{|\Res(\varphi)|^d} \, \Big) 
\ .                               
\end{equation} 
\end{theorem}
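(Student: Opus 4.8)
The plan is to bound the spherical constant $\Lip_{\PP^1(K)}(\varphi)$ first, by a direct computation in homogeneous coordinates with the resultant, and then to deduce the Berkovich bound from it together with the tree structure of the metric $d(x,y)$ on $\BPP^1_K$. Throughout, fix a normalized representation $(F,G)$ and put $R=\Res(F,G)$.

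\emph{The spherical bound.} Represent points of $\PP^1(K)$ by primitive vectors $(x,y)\in\cO^2$, i.e. with $\max(|x|,|y|)=1$. For $P_i=[x_i:y_i]$ one has $\|P_1,P_2\|=|x_1y_2-x_2y_1|$ and, with $F_i=F(x_i,y_i)$, $G_i=G(x_i,y_i)$, $M_i=\max(|F_i|,|G_i|)$,
\[
\|\varphi(P_1),\varphi(P_2)\|=\frac{|F_1G_2-F_2G_1|}{M_1M_2}.
\]
Two inputs drive the estimate. First, the Bézout identities expressing $R\,X^{2d-1}$ and $R\,Y^{2d-1}$ as $\cO[X,Y]$-combinations of $F$ and $G$ give $|R|\le M_i\le 1$ for every primitive vector. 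Second, the biform $H(X_1,Y_1,X_2,Y_2)=F(X_1,Y_1)G(X_2,Y_2)-F(X_2,Y_2)G(X_1,Y_1)$ vanishes when $[X_1{:}Y_1]=[X_2{:}Y_2]$, hence is divisible by $L=X_1Y_2-X_2Y_1$; since $L$ is primitive, Gauss's lemma yields $H=L\cdot Q$ with $Q\in\cO[X_1,Y_1,X_2,Y_2]$ of bidegree $(d-1,d-1)$. Specializing $(X_2,Y_2)$ to a primitive $(x_2,y_2)$, the binary form $H(X,Y,x_2,y_2)=G_2F(X,Y)-F_2G(X,Y)$ has Gauss norm $\le M_2$, and dividing by the primitive form $Xy_2-x_2Y$ (Gauss again) gives $\|Q(X,Y,x_2,y_2)\|\le M_2$, so $|Q(x_1,y_1,x_2,y_2)|\le M_2$. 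Since $|L(x_1,y_1,x_2,y_2)|=\|P_1,P_2\|$,
\[
\|\varphi(P_1),\varphi(P_2)\|=\frac{|L(x_1,y_1,x_2,y_2)|\,|Q(x_1,y_1,x_2,y_2)|}{M_1M_2}\ \le\ \frac{|L(x_1,y_1,x_2,y_2)|}{M_1}\ \le\ \frac{\|P_1,P_2\|}{|R|},
\]
which is the first inequality. (For $d=1$ one has $Q=R$ and this is classical.)

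\emph{The Berkovich bound.} Here I would use that $(\BPP^1_K,d)$ is a compact $\RR$-tree on which $d$ is the geodesic (arclength) metric for which the Gauss point $\zeta_G$ has $\diam(\zeta_G)=1$, $\diam$ decreases from $1$ to $0$ along the paths emanating from $\zeta_G$, and arclength is the total variation of $\diam$. Since $d$ is geodesic and $\varphi$ is continuous, $d(\varphi(x),\varphi(y))$ is at most the $d$-length of $\varphi([x,y])$, and at a point $\zeta$ of type II or III, in a tangent direction $\vec v$, the infinitesimal $d$-stretch of $\varphi$ equals $m_\varphi(\zeta,\vec v)\cdot\diam(\varphi(\zeta))/\diam(\zeta)$, where $m_\varphi(\zeta,\vec v)$ is the local multiplicity. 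Hence $\Lip_\Berk(\varphi)=\sup_{\zeta,\vec v}m_\varphi(\zeta,\vec v)\,\diam(\varphi(\zeta))/\diam(\zeta)$, with $m_\varphi(\zeta,\vec v)\le d$ always. It remains to bound $\diam(\varphi(\zeta))/\diam(\zeta)$. If the disk $B$ underlying $\zeta$ contains no pole of $\varphi$, then $\varphi$ carries $B$ onto a disk with $\varphi(\zeta)$ as boundary point, so $\diam(\varphi(\zeta))\le\sup_{x,y\in B}\|\varphi(x),\varphi(y)\|\le\Lip_{\PP^1(K)}(\varphi)\cdot\sup_{x,y\in B}\|x,y\|=\Lip_{\PP^1(K)}(\varphi)\cdot\diam(\zeta)\le\diam(\zeta)/|R|$, and the local stretch is $\le d/|R|$. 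If $B$ does contain poles (so $\varphi$ "wraps" $B$ and $\varphi(B)$ need not be a disk), I would instead normalize $B$ to the unit disk, track the effect on $R$ of this normalization — which lies in $\GL_2(K)$ but not in $\GL_2(\cO)$ — and apply the resultant lower bound for $\max(|F|,|G|)$ on the normalized disk; the worst case, a cluster of poles in a small ball, yields only the weaker bound $1/|R|^{d}$ for the local stretch. Taking the maximum of the two regimes gives $\Lip_\Berk(\varphi)\le\max\!\big(d/|R|,\,1/|R|^{d}\big)$.

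\emph{Main obstacle.} The first inequality is essentially routine once the factorization $H=L\cdot Q$ over $\cO$ is recorded. The genuine work is the pole case of the Berkovich estimate: bounding $\diam(\varphi(\zeta))$ from below \emph{purely in terms of} $|R|$ when $\varphi$ collapses a large part of $\BPP^1_K$ into a small ball. This requires a careful study of how $\varphi$ distorts balls near its poles and of the behaviour of the resultant under normalizations outside $\GL_2(\cO)$, and it is exactly this configuration that is responsible for the exponent $d$ in $1/|R|^{d}$.
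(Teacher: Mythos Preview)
Your proof of the spherical bound $\Lip_{\PP^1(K)}(\varphi)\le 1/|\Res(\varphi)|$ is correct and is genuinely different from the paper's approach. The paper proves the exact formula $\Lip_{\PP^1(K)}(\varphi)=1/\GPR(\varphi)$ and then establishes $\GPR(\varphi)\ge|\Res(\varphi)|$ via a geometric argument on the tree together with a product formula for $|\Res(\varphi)|$ (Proposition~\ref{ResultantProp}). Your route---the B\'ezout identities for $\Res$ giving $\max(|F_i|,|G_i|)\ge|\Res|$, and the factorization $F_1G_2-F_2G_1=(X_1Y_2-X_2Y_1)\cdot Q$ over $\cO$ via Gauss's lemma---is the classical algebraic argument and is more elementary; it loses the exact formula but reaches the resultant bound in one step.

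For the Berkovich bound, your reduction to the pointwise estimate $m_\varphi(\zeta,\vv)\cdot\diam_G(\varphi(\zeta))/\diam_G(\zeta)$ is correct (this is essentially Corollary~\ref{DerivCor} combined with the chain rule, as in Proposition~\ref{Case5}). Two minor issues: $(\BPP^1_K,d)$ is not compact (the strong topology is not compact), though this is irrelevant to your argument; and the stretch formula holds only along radial segments, so one must first invoke a decomposition into such segments (Lemma~\ref{DecompositionLemma}).

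The genuine gap is your ``pole case'': the sentence beginning ``I would instead normalize $B$ to the unit disk, track the effect on $R$\ldots'' is not a proof, and it is not clear how the proposed normalization would produce exactly $1/|\Res|^{d}$. The paper does \emph{not} proceed this way at all: it proves Theorem~\ref{MainThm0}, the bound in terms of $\GIR(\varphi)$ and $B_0(\varphi)$, by a five-case analysis of the function $r\mapsto\diam_G(\varphi(\zeta_{0,r}))$ (Propositions~\ref{Case1}--\ref{Case5}), and only afterwards specializes to $|\Res(\varphi)|$ via Corollary~\ref{ResultantBoundCor}.

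In fact your dichotomy is unnecessary. At a type~II point $\zeta\ne\zeta_G$ there are infinitely many tangent directions $\vv\ne\vv_{\zeta_G}$, and at most $d$ of them contain a pole; choose any pole-free one. Then $\varphi(\cB_\zeta(\vv)^-)$ is a ball with boundary point $\varphi(\zeta)$, its spherical diameter is $\ge\diam_G(\varphi(\zeta))$, and the spherical diameter of $\cB_\zeta(\vv)^-$ is exactly $\diam_G(\zeta)$. Your own spherical bound then gives $\diam_G(\varphi(\zeta))\le\diam_G(\zeta)/|\Res(\varphi)|$ \emph{for every} type~II $\zeta$, hence the local stretch is $\le d/|\Res(\varphi)|$ everywhere. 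This closes the gap and actually yields $\Lip_\Berk(\varphi)\le d/|\Res(\varphi)|$, which is stronger than the stated bound in (\ref{FirstCorBound}): the term $1/|\Res(\varphi)|^{d}$ is an artifact of the paper's indirect passage through $\GIR(\varphi)$ and $B_0(\varphi)$, not a feature of the problem.
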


\noindent{In particular,} $\Lip_{\PP^1(K)}(\varphi)$ and $\Lip_\Berk(\varphi)$ are uniformly bounded
in terms of the proximity of $\varphi$ to the boundary of the parameter space $\Rat_d$.

\smallskip
We prove Theorem \ref{FirstCor} by first bounding $\Lip_\Berk(\varphi)$ and $\Lip_{\PP^1(K)}(\varphi)$ 
in terms of geometric invariants of $\varphi$. Suppose 
\begin{equation} \label{varphi_Factorization}
\varphi(z) \ = \ C \cdot \frac{\prod_{i=1}^N (z-\alpha_i)}{\prod_{j=1}^M (z-\beta_j)} \ , 
\end{equation} 
where $d = \deg(\varphi) = \max(M,N)$.  

If the $\alpha_i, \beta_j$ are fixed but $|C| \rightarrow \infty$, then 
$\Lip_{\PP^1(K)}(\varphi), \Lip_\Berk(\varphi) \rightarrow \infty$.  
We introduce the {\em Gauss Image Radius} $\GIR(\varphi)$ 
as a geometric replacement for $C$:  given $x \in \BPP^1_K$, let $0 \le \diam_G(x) \le 1$ 
be its diameter with respect to the Gauss point $\zeta_G$  (see \S1),  
and put  
\begin{equation*}
\GIR(\varphi) \ := \ \diam_G(\varphi(\zeta_G)) \ .
\end{equation*} 
Likewise, if $C$ is fixed, but a root approaches a pole, then 
$\Lip_{\PP^1(K)}(\varphi), \Lip_\Berk(\varphi) \rightarrow \infty$. 
Let the Root-Pole number $\RP(\varphi)$ be the minimal spherical distance between a zero and a pole of $\varphi$. 
We introduce the {\em Ball-Mapping radius} $B_0(\varphi)$ as a geometric replacement for $\RP(\varphi)$:  
for each $a \in \PP^1(K)$ and each $0 < r \le 1$, let $B(a,r)^- = \{z \in \PP^1(K) : \|z,a\| < r\}$,
and let $\cB(a,r)^-$ be the smallest open connected subset of $\BPP^1_K$ containing $B(a,r)^-$.  It is known that
$\varphi(\cB(a,r)^-)$ is either an open ball $\cB_Q(\vv)^-$, or is all of $\BPP^1_K$.  Define 
\begin{equation*}
B_0(\varphi) 
\ := \ \sup \{ \ 0 < r \le 1 \ : \text{ for all $a \in \PP^1(K)$, $\varphi(\cB(a,r)^-)$ is a ball } \} \ .
\end{equation*}

The number $\GIR(\varphi)$ can be readily computed from the coefficients of $\varphi$
(see Proposition \ref{GIRProp}).   We do not know how to determine  $B_0(\varphi)$ in general;
this seems an interesting problem.  
By Proposition \ref{GaussInequalitiesProp} and Corollary \ref{ResultantBoundCor}
one has $B_0(\varphi) \ge \RP(\varphi) \ge |\Res(\varphi)|$.
In Proposition \ref{B_0RationalityProp} 
we show that $B_0(\varphi) \in |K^{\times}|$ 
and is achieved by some ball $\cB(a,r)^-$.  

Our main result is 

\begin{theorem}  \label{MainThm0}  Let $K$ be a complete, algebraically closed nonarchimedean field,  
and let $\varphi(z) \in K(z)$ have degree $d \ge 1$.  Then
\begin{equation} \label{MainBound0} 
\Lip_\Berk(\varphi) \ \le \ \max\Big( \frac{1}{\GIR(\varphi) \cdot B_0(\varphi)^d} \, , 
                                  \frac{d}{\GIR(\varphi)^{1/d} \cdot B_0(\varphi)} \Big) \ .
\end{equation} 
\end{theorem}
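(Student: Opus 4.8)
The plan is to reduce $\Lip_\Berk(\varphi)$ to a supremum of infinitesimal dilatation factors along the geodesics of $\BPP^1_K$, and then to bound those factors by combining the local behaviour of $\varphi$ at a point of type II with the ball‑mapping property that defines $B_0(\varphi)$. To begin, I would invoke the description of the metric recalled in \S1: $(\BPP^1_K,d)$ is a geodesic space on which $\PGL_2(\cO)$ acts by isometries, along any geodesic segment inside $\BPP^1_K\setminus\PP^1(K)$ the increment of $d$ equals the increment of the Gauss diameter $\diam_G$, and $\varphi$ maps $\BPP^1_K\setminus\PP^1(K)$ to itself, carrying a geodesic segment onto a geodesic segment traversed possibly with folding. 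Since folding can only shorten the image, one obtains
\[
\Lip_\Berk(\varphi)\ =\ \sup_{\xi}\ \max_{\vec v}\ \big|\,\lambda_{\vec v}(\xi)\,\big|,
\]
where $\xi$ runs over type II points, $\vec v$ over tangent directions at $\xi$, and $\lambda_{\vec v}(\xi)$ is the one‑sided derivative at $\xi$ of $\diam_G\circ\varphi$ with respect to the $\diam_G$‑parameter along the ray in direction $\vec v$; the type I points are then absorbed by continuity, again using that $d$ is geodesic. Checking that the global supremum over pairs collapses to this infinitesimal one is routine for a geodesic metric, but should be isolated as a lemma.

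Next I would fix a type II point $\xi$ and a direction $\vec v$ and normalize. Replacing $\varphi$ by $\gamma_2\circ\varphi\circ\gamma_1$ with $\gamma_1,\gamma_2\in\PGL_2(\cO)$ leaves $\Lip_\Berk$, $\GIR$ and $B_0$ unchanged, so one may put the center of $\xi$ and the direction $\vec v$ in a convenient standard position. By the local description of $\varphi$ at a point of type II (Rivera--Letelier), $\varphi$ sends $\vec v$ to a direction at $\varphi(\xi)$ with a directional multiplicity $m=m_{\vec v}(\varphi)\in\{1,\dots,d\}$, and after rescaling the sub‑ball cut out by $\vec v$ at $\xi$ and the sub‑ball cut out at $\varphi(\xi)$ so that both become unit disks, $\varphi$ is modelled near $\xi$ by $u\mapsto c\,u^{m}+(\text{higher order})$ with $|c|\le 1$. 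From this one reads off that $|\lambda_{\vec v}(\xi)|$ is at most $m\cdot\diam_G(\varphi(\xi))/\diam_G(\xi)$ near $\xi$ and is controlled by $|c|$ further along the geodesic; the case where $\vec v$ points toward $\zeta_G$ is handled by the same computation after the change of coordinates $z\mapsto 1/z$.

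The substance of the proof is then to bound these slopes uniformly in $\xi$ by the two terms of the asserted maximum, and this is where the global invariants do their work. The value $\GIR(\varphi)=\diam_G(\varphi(\zeta_G))$ fixes the scale of $\varphi$ at the top of the tree, while the defining property of $B_0(\varphi)$ — that $\varphi(\cB(a,r)^-)$ is a genuine ball, not all of $\BPP^1_K$, whenever $r\le B_0(\varphi)$ — says that $\varphi$ does not wrap around below scale $B_0(\varphi)$. Invoking Proposition~\ref{B_0RationalityProp}, so that the extremal radius is attained by an actual ball, I would propagate the scale estimate from $\zeta_G$ along geodesics, keeping track of how directional multiplicities compose and of how the ball‑or‑everything dichotomy bounds $\diam_G(\varphi(\xi))$ from below; an extremal analysis over the local degree $m\in\{1,\dots,d\}$ and over the position of $\xi$ relative to the $B_0$‑threshold should then produce exactly the two terms, with $1/(\GIR(\varphi)\cdot B_0(\varphi)^{d})$ arising from nearly unramified directions deep in the tree and $d/(\GIR(\varphi)^{1/d}\cdot B_0(\varphi))$ from fully ramified ($m=d$) ones.

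The main obstacle is precisely this last step: turning the heuristic ``$\varphi$ has overall scale $\GIR(\varphi)$ and wraps only above scale $B_0(\varphi)$'' into uniform quantitative bounds, and in particular getting the exponents on $\GIR(\varphi)$ and $B_0(\varphi)$ exactly right, which forces one to control the compositional behaviour of directional multiplicities along an arbitrary geodesic simultaneously with the ball‑mapping constraint. (The sharper bound on $\Lip_{\PP^1(K)}(\varphi)$ needed afterwards for Theorem~\ref{FirstCor} should come out of the same analysis specialized to type I endpoints, where only the estimate near a classical point intervenes.)
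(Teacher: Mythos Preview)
Your reduction to infinitesimal dilatation along geodesics and normalization by $\GL_2(\cO)$ is correct and matches the paper's setup (Lemma~\ref{DecompositionLemma} and Corollaries~\ref{SupCor}, \ref{DerivCor}). But the gap you flag as the ``main obstacle'' is the entire content of the proof, and your sketch of how to fill it---propagating the scale from $\zeta_G$ and tracking compositional multiplicities along an arbitrary geodesic---is not what the paper does and would be hard to make precise as stated.

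The paper's argument is not a global propagation along geodesics; it exploits the $B_0$-threshold to split the tree into a \emph{central} region $\{\diam_G \ge B_0\}$ and \emph{peripheral} balls $\cB(\alpha,B_0)^-$, handled separately. On the central region the elementary bound $|\lambda_{\vec v}(\xi)| \le d\cdot\diam_G(\varphi(\xi))/\diam_G(\xi) \le d/B_0$ already suffices (Proposition~\ref{Case5}). On a peripheral ball, after moving it to $\cD(0,B_0)^-$ and arranging that $\Phi$ has no poles there, one expands $\Phi$ by Weierstrass preparation and computes $f_\Phi(r)=\diam_\infty(\Phi(\zeta_{0,r}))$ via Newton polygons. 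The decisive quantitative input is Lemma~\ref{GrowthBound2}:
\[
f_\Phi(B_0)\ \le\ \frac{B_0^{\,n-m}}{\GIR(\Phi)},
\]
where $n$ counts zeros in $D(0,B_0)^-$ and $m$ counts poles in the annulus $D(0,1)^-\setminus D(0,B_0)^-$. This comes from evaluating $|\Phi(z)|$ via the product over roots and poles at $|z|\to B_0^-$ and at $|z|\to 1^-$ and dividing; your local inequality $|\lambda_{\vec v}(\xi)|\le m\cdot\diam_G(\varphi(\xi))/\diam_G(\xi)$ cannot produce it by itself. With this lemma, the derivative $|F_\Phi'(r)|$ on $[0,B_0]$ is bounded by $k/r_0$ at the crossover $f_\Phi(r_0)=1$, hence by $k\,B_0^{-1}(B_0^{n-m}/\GIR)^{1/k}$; a one-variable convexity maximization over $k\in[1,n]$ and then over $(n,m)\in[1,d]\times[0,d]$ (Lemma~\ref{CalculusLemma}) yields exactly the two terms of~(\ref{MainBound0}).

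So the exponents you are after do not come from ``compositional behaviour of directional multiplicities'' along a geodesic; they come from a zero/pole count on a single disc of radius $B_0$ (this is what the definition of $B_0$ buys), fed into an explicit calculus optimization. Your heuristic for which configurations realize the two terms is roughly right but not quite: the first term arises at $k=1$, $n=1$, $m=d$, and the second at $k=n=d$, $m=d$.
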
 


Combined with the inequality $B_0(\varphi) \ge \RP(\varphi)$, Theorem \ref{MainThm0} implies the following bounds, 
which may be useful when a factorization of $\varphi$ in the form 
(\ref{varphi_Factorization}) is known:
\begin{equation} \label{SecondCorBound} 
\Lip_\Berk(\varphi) \ \le \ \max\Big( \frac{1}{\GIR(\varphi) \cdot \RP(\varphi)^d} \, , 
                                  \frac{d}{\GIR(\varphi)^{1/d} \cdot \RP(\varphi)} \Big) 
                           \ \le \ \frac{d}{\GIR(\varphi) \cdot \RP(\varphi)^d} \ .
\end{equation}  

The bound in Theorem \ref{MainThm0} 
is sharp when $d=1$, that is, when $\varphi(z)$ is a linear fractional transformation (see Theorem \ref{MobiusCase} below).  
When $d \ge 2$, 
for each triple $\big(d,\GIR(\varphi),B_0(\varphi)\big)$ 
we give examples where $\Lip_\Berk(\varphi)$ is within a factor $(d-1)/d$  of the right side of (\ref{MainBound0})
(see \S\ref{ExamplesSection}). 

\smallskip
Trivially $\Lip_{\PP^1(K)}(\varphi) \le \Lip_\Berk(\varphi)$, however, there is an explicit formula
for $\Lip_{\PP^1(K)}(\varphi)$ which yields a better bound.  
The set $\varphi^{-1}(\{\zeta_G\}) \subset \BPP^1_K \backslash \PP^1(K)$ is finite;     
define the {\em Gauss Pre-Image radius} 
\begin{equation*}
\GPR(\varphi) \ = \ \min_{\varphi(\xi) = \zeta_G}  \diam_G(\xi) \ .
\end{equation*}

\begin{theorem} \label{Classical_Lip} 
Let $K$ be a complete, algebraically closed nonarchimedean field,  
and let $\varphi(z) \in K(z)$ have degree $d \ge 1$. Then
\begin{equation*}
\Lip_{\PP^1(K)}(\varphi) \ = \ \frac{1}{\GPR(\varphi)} \ . 
\end{equation*} 
\end{theorem}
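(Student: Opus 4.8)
The plan is to transfer the problem to the tree $\BPP^1_K$ and exploit the retraction toward $\zeta_G$. For distinct $x,y\in\PP^1(K)$ write $x\wedge y$ for the point of the arc $[x,y]$ nearest $\zeta_G$ (the branch point of $x$, $y$, $\zeta_G$); a basic fact, recorded in \S1, is $\|x,y\|=\diam_G(x\wedge y)$, and $\diam_G$ is strictly decreasing along any path issuing from $\zeta_G$, so it is maximized on $[x,y]$ at $w:=x\wedge y$. I would also use the path metric $\rho$ on $\BPP^1_K\setminus\PP^1(K)$ normalized by $\rho(\zeta,\zeta_G)=-\log\diam_G(\zeta)$, the fact that $\varphi$ has local degree $\ge1$ at every point of $\BPP^1_K\setminus\PP^1(K)$ and expands $\rho$-arclength by that local degree along any arc on which it is injective, and the standard monotone-lifting property of geodesics through $\varphi$. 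Granting these, fix $x\ne y$ with $\varphi(x)\ne\varphi(y)$ (otherwise the quotient is $0$). Since $\varphi([x,y])$ is connected and contains $\varphi(x),\varphi(y)$, it contains $[\varphi(x),\varphi(y)]$, hence the point $w^\ast:=\varphi(x)\wedge\varphi(y)$, so $w^\ast=\varphi(w')$ for some $w'\in[x,y]$. Then $\|\varphi(x),\varphi(y)\|=\diam_G(\varphi(w'))$ while $\diam_G(w')\le\diam_G(w)=\|x,y\|$, so the bound $\Lip_{\PP^1(K)}(\varphi)\le1/\GPR(\varphi)$ reduces to the pointwise estimate
\begin{equation*}
\diam_G(\varphi(\zeta))\ \le\ \diam_G(\zeta)/\GPR(\varphi)\qquad(\zeta\in\BPP^1_K\setminus\PP^1(K)),
\end{equation*}
which is the natural pointwise companion of Theorem~\ref{MainThm0} and should in any case be available from the earlier sections.

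To prove this estimate, lift the geodesic $[\varphi(\zeta),\zeta_G]$ through $\varphi$ starting at $\zeta$: one obtains an arc $\gamma=[\zeta,\xi]$ on which $\varphi$ is injective and which $\varphi$ maps monotonically onto $[\varphi(\zeta),\zeta_G]$, so the far endpoint satisfies $\varphi(\xi)=\zeta_G$, i.e. $\xi\in\varphi^{-1}(\zeta_G)$. Injectivity of $\varphi$ on $\gamma$ gives $\rho(\varphi(\zeta),\zeta_G)=\rho(\varphi(\zeta),\varphi(\xi))\ge\rho(\zeta,\xi)$, and combining with the triangle inequality $\rho(\zeta,\zeta_G)\le\rho(\zeta,\xi)+\rho(\xi,\zeta_G)$ and the bound $\rho(\xi,\zeta_G)=-\log\diam_G(\xi)\le-\log\GPR(\varphi)$ yields $\rho(\varphi(\zeta),\zeta_G)\ge\rho(\zeta,\zeta_G)+\log\GPR(\varphi)$, which is the estimate.

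For the reverse inequality, fix $\xi_0\in\varphi^{-1}(\zeta_G)$ with $\diam_G(\xi_0)=\GPR(\varphi)$. The map induced by $\varphi$ on tangent directions at $\xi_0$ is a nonconstant rational map $\widetilde\varphi_{\xi_0}$ of the residue curve $\PP^1(\tk)$, with target the residue directions at $\varphi(\xi_0)=\zeta_G$. Since $\varphi^{-1}(\zeta_G)$ is finite, all but finitely many directions $\vv$ at $\xi_0$ have the property that the component $\cB_\vv$ of $\BPP^1_K\setminus\{\xi_0\}$ in direction $\vv$ contains neither $\zeta_G$ nor any point of $\varphi^{-1}(\zeta_G)$; among these, nonconstancy of $\widetilde\varphi_{\xi_0}$ supplies $\vv_1,\vv_2$ with $\widetilde\varphi_{\xi_0}(\vv_1)\ne\widetilde\varphi_{\xi_0}(\vv_2)$. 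Choose classical points $x\in\cB_{\vv_1}$, $y\in\cB_{\vv_2}$. Then $x\wedge y=\xi_0$, so $\|x,y\|=\GPR(\varphi)$; and since $\cB_{\vv_i}$ contains no preimage of $\zeta_G$, the connected set $\varphi(\cB_{\vv_i})$ avoids $\zeta_G$ and hence lies in the residue disc attached to $\widetilde\varphi_{\xi_0}(\vv_i)$. Thus $\varphi(x)$ and $\varphi(y)$ lie in distinct residue discs, $\|\varphi(x),\varphi(y)\|=1$, and the quotient equals $1/\GPR(\varphi)$. The crux is the pointwise estimate, and within it the two geometric inputs used above — monotone lifting of geodesics under $\varphi$ and the expansion of $\rho$ by the local degree — which package the branched-covering structure of $\varphi$ on $\BPP^1_K$ and are where the earlier sections' analysis of the fibre $\varphi^{-1}(\zeta_G)$ is really being used; the lower bound is comparatively soft, needing only that the residue map at $\xi_0$ is nonconstant.
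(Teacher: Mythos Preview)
Your proof is correct, and for the upper bound it takes a genuinely different route from the paper. The paper argues directly with the pair $x,y$: when $\|x,y\|\ge\GPR(\varphi)$ the bound is trivial, and when $\|x,y\|<\GPR(\varphi)$ one normalizes so that $y=0$, $\varphi(y)=0$, observes that $\varphi(\cD(0,\GPR(\varphi))^-)$ omits $\zeta_G$ and hence sits inside $\cD(0,1)^-$, and then reads off $|c_i|\le\GPR(\varphi)^{-i}$ from the power-series expansion to get $|\varphi(x)|\le|x|/\GPR(\varphi)$. Your argument instead isolates the clean pointwise inequality $\diam_G(\varphi(\zeta))\le\diam_G(\zeta)/\GPR(\varphi)$ on $\BHH^1_K$ and proves it by lifting the geodesic $[\varphi(\zeta),\zeta_G]$ through $\varphi$, using that $\varphi$ is $\rho$-nondecreasing on arcs and the triangle inequality. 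This is more geometric and yields a statement of independent interest; the paper's approach is more self-contained, since path-lifting, while standard (and implicit in the Rivera-Letelier structure results cited), is not explicitly recorded here. One small point worth making explicit in your reduction: the preimage $w'\in[x,y]$ of $w^\ast=\varphi(x)\wedge\varphi(y)$ lies in $\BHH^1_K$ (so $\diam_G(w')>0$) because $w^\ast$ is of type II or III and $\varphi$ preserves type.

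For the lower bound the two arguments are essentially the same: both pick $\xi_0\in\varphi^{-1}(\zeta_G)$ realizing $\GPR(\varphi)$ and choose two tangent directions at $\xi_0$, away from $\zeta_G$, with distinct images under $\varphi_*$. The paper excludes directions $\vv$ with $\varphi(\cB_{\xi_0}(\vv)^-)=\BPP^1_K$; you instead exclude directions whose ball contains a point of $\varphi^{-1}(\zeta_G)$. Either exclusion is a finite set and forces $\varphi(\cB_{\xi_0}(\vv_i)^-)$ into the residue ball $\cB_{\zeta_G}(\varphi_*(\vv_i))^-$, giving $\|\varphi(x),\varphi(y)\|=1$.
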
 

In Corollary \ref{ResultantBoundCor}, 
we show that $\GIR(\varphi)^d \cdot B_0(\varphi) \ge |\Res(\varphi)|$, and that $\GPR(\varphi) \ge |\Res(\varphi)|$. 
Combining this with Theorems \ref{MainThm0} and \ref{Classical_Lip} yields Theorem \ref{FirstCor}.

\smallskip
For a M\"obius transformation, 
the bounds in Theorems $\ref{FirstCor}$, $\ref{MainThm0}$, and $\ref{Classical_Lip}$ are sharp,
and can be made much more explicit:

\begin{theorem} \label{MobiusCase}
Let $K$ be a complete, algebraically closed nonarchimedean field,  
and let $\varphi(z) = (az+b)/(cz+d) \in K(z)$ have degree $1$. Then $B_0(\varphi) = 1$, and 
\begin{eqnarray*}
\Lip_\Berk(\varphi) & = & \Lip_{\PP^1(K)}(\varphi) \ = \ \frac{1}{\GIR(\varphi)} \ = \ \frac{1}{\GPR(\varphi)} \\
                    & = & \frac{1}{|\Res(\varphi)|} \ = \ \frac{\max(\,|a|,|b|,|c|,|d|\,)}{|ad-bc|} \ .
\end{eqnarray*}
\end{theorem}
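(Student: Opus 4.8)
The plan is to treat $\varphi(z)=(az+b)/(cz+d)$ with $ad-bc\ne 0$ and reduce everything to the action of $\GL_2(\cO)$ on $\BPP^1_K$. First, by replacing $(F,G)=(aX+bY,\,cX+dY)$ with a scalar multiple, I may assume $(F,G)$ is a normalized representation, so that $\max(|a|,|b|,|c|,|d|)=1$ and $|\Res(\varphi)|=|ad-bc|$. The key structural fact is that a normalized degree-$1$ pair is exactly the datum of a matrix $M=\begin{pmatrix} a & b\\ c& d\end{pmatrix}$ that is primitive over $\cO$ (some entry a unit); the number $|ad-bc|=|\det M|$ measures how far $M$ is from lying in $\GL_2(\cO)$. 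I will use the standard normal form: by multiplying $M$ on the left and right by elements of $\GL_2(\cO)$ (which act as isometries of $d(\cdot,\cdot)$ on $\BPP^1_K$ and of $\|\cdot,\cdot\|$ on $\PP^1(K)$, hence do not change any Lipschitz constant), I can bring $M$ to diagonal form $\diag(1,\pi^k)$ with $|\pi^k|=|\det M|=|\Res(\varphi)|$. So it suffices to compute all the invariants for the map $\psi(z)=\pi^k z$ (equivalently $z\mapsto \pi^{-k}z$), i.e.\ a pure scaling.

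Next I compute each quantity for the scaling map. For $\diam_G$: the Gauss point $\zeta_G$ corresponds to the disk $B(0,1)^-$ closure, and $\diam_G(\zeta_G)=1$; for a type-II point $\zeta_{a,r}$ with $r\le 1$ one has $\diam_G(\zeta_{a,r})=r$. Under $\psi(z)=\pi^k z$ (WLOG $k\ge 0$, else invert), $\psi(\zeta_G)=\zeta_{0,|\pi^k|}$, so $\GIR(\psi)=\diam_G(\psi(\zeta_G))=|\pi^k|=|\Res(\varphi)|$. For $\GPR$: the preimage $\psi^{-1}(\zeta_G)$ is the single point $\zeta_{0,|\pi^{-k}|}=\zeta_{0,|\pi|^{-k}}$, but since this has $\diam_G>1$ when $k>0$ I must instead take the version where $k\le 0$, or more cleanly argue symmetrically using $\varphi^{-1}$; in either orientation one finds the unique preimage point has $\diam_G$ equal to $|\det M|$, so $\GPR(\varphi)=|\Res(\varphi)|$. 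For $B_0$: a scaling maps every ball $B(a,r)^-$ to a ball (scalings are injective on $\PP^1(K)$ and send disks to disks), and likewise for the Berkovich ball $\cB(a,r)^-$; hence no ball fails to map to a ball, giving $B_0(\varphi)=1$. Finally the Lipschitz constants: $\psi(z)=\pi^k z$ scales the chordal metric near $0$ by $|\pi^k|$ and near $\infty$ by $|\pi^{-k}|=1/|\pi^k|$, so $\Lip_{\PP^1(K)}(\psi)=\max(|\pi^k|,|\pi^k|^{-1})=1/|\Res(\varphi)|$ (using $|\Res|\le 1$), and the same computation on $\BPP^1_K$ using the explicit formula for $d(\cdot,\cdot)$ under an affine map gives $\Lip_\Berk(\psi)=1/|\Res(\varphi)|$ as well. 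This already gives all the claimed equalities, and I cross-check consistency with Theorems \ref{MainThm0} and \ref{Classical_Lip}: with $d=1$, $\GIR\cdot B_0^d=\GIR$ and $\GIR^{1/d}\cdot B_0=\GIR$, so the right side of (\ref{MainBound0}) is $1/\GIR=1/|\Res(\varphi)|$, matching; and Theorem \ref{Classical_Lip} gives $1/\GPR=1/|\Res(\varphi)|$, matching.

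The main obstacle is bookkeeping around orientation and the constraint $0\le\diam_G\le 1$: a bare scaling $z\mapsto \pi^k z$ moves $\zeta_G$ either inward or outward depending on the sign of $k$, and $\GIR$ only sees the image diameter while $\GPR$ only sees preimage diameters, so I need to be careful to apply the reduction to normal form in a way that keeps the relevant points inside the unit disk — equivalently, to observe that $\GPR(\varphi)$ for the map $z\mapsto \pi^k z$ equals $\GIR$ of its inverse $z\mapsto\pi^{-k}z$, and both equal $|\pi|^{|k|}=|\det M|$. Once that symmetry is in hand, the rest is the routine verification that $\GL_2(\cO)$ acts by isometries in both metrics (so all six quantities are $\GL_2(\cO)\times\GL_2(\cO)$-invariant and may be computed on the diagonal normal form), which follows from the explicit description of $d(x,y)$ and $\|x,y\|$ in \S1.
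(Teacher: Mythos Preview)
Your approach is essentially the same as the paper's: normalize so that $\max(|a|,|b|,|c|,|d|)=1$, use left and right multiplication by $\GL_2(\cO)$ to reduce to a diagonal matrix $\diag(1,D)$ with $|D|=|\Res(\varphi)|\le 1$, and then compute all invariants for the resulting scaling map. The paper carries this out in Theorem~\ref{LinearThm}.

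There are two execution issues in your write-up. First, your $\GPR$ paragraph contains a genuine error: you write that $\psi^{-1}(\zeta_G)=\zeta_{0,|\pi|^{-k}}$ ``has $\diam_G>1$ when $k>0$,'' but $\diam_G$ is always in $[0,1]$. The correct statement, using formula~(\ref{diam_G1}), is that $\diam_G(\zeta_{0,r})=1/r$ when $r\ge 1$, so $\diam_G(\zeta_{0,|\pi|^{-k}})=|\pi|^{k}=|\Res(\varphi)|$ directly, with no need to pass to $\varphi^{-1}$ or invoke a symmetry argument. (Relatedly, be careful that $\diag(1,\pi^k)$ acts on $z=X/Y$ by $z\mapsto \pi^{-k}z$, not $\pi^k z$; your parenthetical ``equivalently $z\mapsto\pi^{-k}z$'' is not an equivalence.)

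Second, and more substantively, your Lipschitz computation only supplies a \emph{lower} bound. Observing that the scaling dilates the spherical metric by $|\pi|^{-k}$ near $0$ shows $\Lip_{\PP^1(K)}\ge 1/|\Res(\varphi)|$, but the upper bound $\Lip_\Berk(\psi)\le 1/|\Res(\varphi)|$ requires checking \emph{all} pairs, not just those near $0$ or $\infty$. The paper does this by a case-by-case verification along radial paths $[a,\infty]$, splitting according to the position of $|a|$ and $r$ relative to $|D|$ and $1$, and confirming that the ratio $d(\Phi(\zeta_{a,r}),\Phi(\zeta_{a,s}))/d(\zeta_{a,r},\zeta_{a,s})$ never exceeds $1/|D|$. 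Your sentence ``the same computation on $\BPP^1_K$ using the explicit formula for $d(\cdot,\cdot)$\ldots'' asserts this without doing it; since the map $r\mapsto \diam_G(\psi(\zeta_{a,r}))$ is only piecewise of the form $Cr^{\pm 1}$ (with breakpoints at $|D|$, $|a|$, and $1$), some case analysis is genuinely needed here.
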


The plan of the paper is as follows.  In $\S1$ we recall facts and notation concerning the Berkovich projective line.
In $\S2$ we establish some preliminary lemmas, showing that to bound $\Lip_\Berk(\varphi)$ it suffices to bound it on 
a restricted class of segments $[x,y]$.  In $\S3$ we prove Theorem \ref{MobiusCase}.  
In $\S4$ we study the constants $\GIR(\varphi)$, $B_0(\varphi)$, $\RP(\varphi)$, 
and $\GPR(\varphi)$, and we give a formula for $|\Res(\varphi)|$ 
which may be of independent interest.  
In $\S5$ we prove Theorems \ref{FirstCor}, \ref{MainThm0} and \ref{Classical_Lip}.  
Finally, in $\S6$ we provide examples showing that Theorem \ref{MainThm0} cannot be significantly improved.

We thank Xander Faber and Kenneth Jacobs for useful discussions. In particular we thank Jacobs 
for pointing out that Theorem \ref{MainThm0} 
could yield bounds of the form (\ref{FirstCorBound}).

\section{The Berkovich Projective Line} \label{PreliminariesSection}

The Berkovich projective line over $K$ is a locally ringed space, functorially constructed from $\PP^1/K$, 
whose sheaf of rings comes from rigid analysis  
and whose underlying point set is gotten by gluing the Gel'fand spectra of those rings (see \cite{Ber}). 
We will write $\BPP^1_K$ for its point set, which 
is a uniquely path-connected Hausdorff space containing $\PP^1(K)$.
For proofs of the properties of $\BPP^1_K$ discussed below, see (\cite{B-R}, Chapters 1 and 2); 
for additional facts about $\BPP^1_K$, see   
(\cite{BIJL}, \cite{Ber}, \cite{Fab}, \cite{F-RL2}, \cite{FRLErgodic}, \cite{R-L1}). 

Berkovich's classification theorem (see (\cite{Ber}, p.18), or (\cite{B-R}, p.5)) provides an  
elementary model for\, $\BPP^1_K$:
its points correspond to discs $D(a,r) = \{z \in K : |z-a| \le r\}$, where $a \in K$ and $0 \le r \in \RR$, 
or to cofinal equivalence classes of sequences of nested discs with empty intersection, or to the point $\infty \in \PP^1(K)$. 
There are four kinds of points.  
Type I points, which are the points of $\PP^1(K)$, correspond to degenerate discs of radius $0$ in $K$ 
and the point $\infty \in \PP^1(K)$. 
Type II points correspond to discs $D(a,r)$ with $r$ in the value group $|K^{\times}|$,    
and type III points correspond to discs $D(a,r)$ with $r \notin |K^{\times}|$. 
Type IV points correspond to (cofinal equivalence classes of)
sequences of nested discs with empty intersection; they serve to complete $\BPP^1_K$ but rarely need to be 
dealt with explicitly: they are usually handled by continuity arguments. 

We call $\BAA^1_K = \BPP^1_K \backslash \{\infty\}$ the Berkovich Affine Line.

We write $\zeta_{a,r}$ for the point corresponding to $D(a,r)$. 
The point $\zeta_G := \zeta_{0,1}$ corresponding to $D(0,1)$ is called the {\em Gauss point},
and plays a particularly important role.   

Paths in $\BPP^1_K$ correspond to ascending or descending chains of discs, 
or concatenations of such chains sharing an endpoint.  
For example the path from $0$ to $1$
in $\BPP^1_K$ corresponds to the concatenation of the chains 
$\{D(0,r) : 0 \le r \le 1\}$ and $\{D(1,r) : 0 \le r \le 1\}$;  here $D(0,1) = D(1,1)$. 
The point $\infty$, and type IV points, can also be endpoints of chains.
Topologically, $\BPP^1_K$ is a tree:  for any two points $x, y \in \BPP^1_K$,    
there is a unique path $[x,y]$ between $x$ and $y$.  We will write $(x,y)$, $[x,y)$, and $(x,y]$
for the corresponding open or half-open paths. 

Fix a point $\xi \in \BPP^1_K$. The fact that $\BPP^1_K$ is uniquely path connected 
means that for any two points $x, y \in \BPP^1_K$, we can define the join $x \vee_\xi y$ 
to be the first point where the paths $[x,\xi]$ and $[y,\xi]$ meet.  We will be particularly interested
in the cases where $\xi = \infty$ and $\xi = \zeta_G$;  we denote the corresponding joins by $x \vee_\infty y$
and $x \vee_G y$.  

The fact that $\BPP^1_K$ is uniquely path-connected also means that for each $Q \in \BPP^1_K$, 
the path-components of $\BPP^1_K \backslash \{Q\}$ have the property that for any 
$P_1, P_2$ in the same component, the paths $[Q,P_1]$ and $[Q,P_2]$ share an initial segment.  
Because of this, the components are in $1-1$ correspondence with germs of paths emanating from $Q$, 
which we call {\em tangent vectors} $\vv$ at $Q$.  We write $T_Q$ for the set of tangent vectors at $Q$. 
If $Q$ is of type I or type IV, then $T_Q$ has one element;  if $Q$ is of type III, $T_Q$ has two elements.
If $Q$ is of type II, then $T_Q$ is in $1-1$ correspondence with $\PP^1(\tk)$, for the residue field $\tk = \cO/\fM$. 

\begin{definition} \label{BallDef}    
For each $\vv \in T_Q$, we write $\cB_Q(\vv)^-$ for the component of $\BPP^1_K \backslash \{Q\}$ 
containing points for which $[Q,P]$ prolongs $\vv$. If $P \in \cB_Q(\vv)^-$, we say that $P$
is in the direction $\vv$ at $Q$.  Given $P \ne Q, $
we write $\vv_P$ for the direction in $T_Q$ such that $P \in B_Q(\vv_P)^-$. 
\end{definition} 

Let $\ord(x)$ be the additive valuation on $K$ corresponding to $|x|$; 
there is a unique base $q > 1$ for which $\ord(x) = -\log_q(|x|)$.  We will write $\log(z)$ for $\log_q(z)$.
The set $\BHH^1_K = \BPP^1_K \backslash \PP^1(K)$ is called the {\em Berkovich upper halfspace}; 
it carries a metric $\rho(x,y)$ called the {\em logarithmic path distance}, for which the length of the 
path corresponding to the chain of discs $\{D(a,r) : R_1 \le r \le R_2\}$ is $\log(R_2/R_1)$.

There are two natural topologies on $\BPP^1_K$, called the {\em weak}\, and {\em strong} topologies. 
The basic open sets for the weak topology are the path-components 
of $\BPP^1_K \backslash \{P_1, \ldots, P_n\}$
as $\{P_1, \ldots, P_n\}$ ranges over finite subsets of $\BHH^1_K$. 
Under the weak topology,  $\BPP^1_K$ is compact, and $\PP^1(K)$ is dense in it.  
The weak topology is not in general defined by a metric.
The basic open sets for the strong topology 
are the $\rho(x,y)$-balls 
\begin{equation*}
\cB_\rho(x,r)^- \ = \  \{z \in \BHH^1_K : \rho(x,z) < r\} 
\end{equation*} 
for $x \in \BHH^1_K$ and $r > 0$, together with the basic open sets from the weak topology. 
Under the strong topology, $\BPP^1_K$ is complete but not compact. 
The strong topology is induced by the Favre-Rivera Letelier metric $d(x,y)$, defined at (\ref{FRLdMetricDef}) below.     
Type II points are dense in $\BPP^1_K$ for both topologies.

For each $\varphi(z) \in K(z)$, the action of $\varphi(z)$ on $\PP^1(K)$ extends functorially to an action 
on $\BPP^1_K$ which  is continuous for both the weak and strong topologies.  
If $\varphi$ is nonconstant, the action is open, surjective, and preserves the type of each point.  
The action of $\varphi(z) \in K(z)$ on type II points 
corresponds to its `generic' action on punctured discs, 
in the following sense.  Let $D(a,r)^- = \{z \in K: |z-a| < r\}$. 
One has $\varphi(\zeta_{a,r}) = \zeta_{b,R}$ if and only if there are finitely many points $a_1, \ldots, a_m \in D(a,r)$
and finitely many points $b_1, \ldots, b_m \in D(b,R)$ such that  
\begin{equation*}
\varphi\big(D(a,r) \backslash \bigcup_{i=1}^m D(a_i,r)^-\big) \ = \ D(b,R) \backslash \bigcup_{j=1}^n D(b_j,R)^-   
\end{equation*}    
(see \cite{B-R}, Proposition 2.8).  For example, the inversion map $\iota(z) = 1/z$ satisfies 
\begin{equation*}
\left\{ \begin{array}{ll} 
            \iota(D(0,r) \backslash D(0,r)^-) \ = \ D(0,1/r) \backslash D(0,1/r)^- \ , & \text{ }\\
            \iota(D(a,r) \backslash D(a,r)^-) \ = \ D(1/a,r/|a|^2) \backslash D(1/a,r/|a|^2)^- 
                                      & \text{if $|a| > r$ \ .}
                                 \end{array} \right.
\end{equation*} 
Since a disc can be written as $D(0,r)$ if and only if $|a| \le r$, one has  
\begin{equation*}
\iota(\zeta_{a,r}) \ = \ \left\{ \begin{array}{ll} 
                                \zeta_{0,1/r} & \text{if $|a| \le r$ \ , } \\
                                \zeta_{1/a,r/|a|^2} & \text{if $|a| > r$ \ . }
                                 \end{array} \right.
\end{equation*}

In particular, the action of  $\GL_2(K)$ on $\PP^1(K)$ though linear fractional transformations extends 
to an action on $\BPP^1_K$, which is transitive on type I points and type II points.   
The action of $\GL_2(K)$ preserves the logarithmic path distance:  
$\rho(\gamma(x),\gamma(y)) = \rho(x,y)$ for all $x, y \in \HH^1_K$ and $\gamma \in \GL_2(K)$.   
The stabilizer of $\zeta_G$ in $\GL_2(K)$ is $K^{\times} \cdot \GL_2(\cO)$. 

\smallskip
If we identify $\PP^1(K)$ with $K \cup \{\infty\}$ and make the usual conventions for arithmetic operations 
involving $\infty$, the spherical metric on $\PP^1(K)$ is given by 
\begin{equation*}
\|x,y\| \ = \ \left\{ \begin{array}{ll} |x-y|     & \text{if $|x|, |y| \le 1$ \ ,} \\
                                        |1/x-1/y| & \text{if $|x|, |y| > 1$ \ ,} \\
                                        1         & \text{otherwise \ .}            \end{array}  \right.
\end{equation*}
For $x,y \ne \infty$, one has  $\|x,y\| = \ |x-y|/\big(\max(1,|x|) \max(1,|y|)\big)$.  
For all $x,y \in \PP^1(K)$, and all $\gamma \in \GL_2(\cO)$, 
one has $\|\gamma(x),\gamma(y)\| = \|x,y\|$.

\smallskip
We will use two ``diameter'' functions on $\BPP^1_K$.  
For $x \in \BPP^1_K$, the diameter of $x$ with respect to the point $\infty \in \PP^1(K)$ is given by 
\begin{equation}
\diam_\infty(x) \ = \ \left\{ \begin{array}{ll} 
                                 0 & \text{if $x \in K$\ , } \\
                                 r & \text{if $x = \zeta_{a,r}$ is of type II or III \ ,} \\
                                 \inf\{ r : \zeta_{a,r} \in (x,\infty) \} & \text{if $x$ is of type IV \ ,} \\ 
                                 \infty & \text{if $x = \infty \in \PP^1(K)$\ . } 
                              \end{array} \right.  \label{diam_infty} 
\end{equation}
The function $\diam_\infty(x)$ is preserved by translations:  for any $b \in K$, if $\gamma(z) = z + b$,
then  $\diam_\infty(\gamma(x)) = \diam_\infty(x)$.
 
 The diameter with respect to the Gauss point $\zeta_G$ is defined by 
\begin{equation} \label{diam_G} 
\diam_G(x) \ = \ q^{-\rho(\zeta_G,x)} \ . 
\end{equation} 
If $x = \zeta_{a,r}$ is a point of type II or III, one has 
\begin{equation}
\diam_G(x) \ = \ \left\{ \begin{array}{ll} r     & \text{if $|a|, |r| \le 1$} \\
                                           r/|a|^2  & \text{if $|a| > 1$ and $r < |a|$} \\
                                           1/r      & \text{if $r > 1$ and $|a| \le r$} \end{array}  \right. 
                                                         \label{diam_G1}\\
\end{equation}
Evidently $0 \le \diam_G(x) \le 1$, with $\diam_G(x) < 1$ if $x \ne \zeta_G$,   
and $\diam_G(x) = 0$ if and only $x \in \PP^1(K)$.  
For each $\gamma \in \GL_2(\cO)$, one has $\diam_G(\gamma(x)) = \diam_G(x)$.
Moreover, $\GL_2(\cO)$ acts transitively on type I points, and, for a given $r \in |K^\times|$, 
on type II points with  $\diam_G(x) = r$.  If $a, b \in \PP^1(K)$, then   
\begin{equation*}  
\diam_G( a \vee_G b) \ = \ \|a,b\| \ . 
\end{equation*}

\smallskip
The Favre-Rivera Letelier metric $d(x,y)$ is defined by 
\begin{equation} \label{FRLdMetricDef}
d(x,y) \ = \ \big(\diam_G(x \vee_G y) - \diam_G(x)\big) + \big(\diam_G(x \vee_G y) - \diam_G(y)\big) \ .
\end{equation}
One has $0 \le d(x,y) \le 2$ for all $x, y$.
To see that $d(x,y)$ is a metric, note that it is positive if $x \ne y$, and is  
 clearly symmetric. It satisfies the triangle inequality $d(x,z) \le d(x,y) + d(y,z)$ because it is additive on paths:
if $Q$ is any point in $[x,y]$, then $d(x,y) = d(x,Q) + d(Q,y)$.    

\begin{proposition} \label{FRLd_Properties}
The metric $d(x,y)$ has the following properties:  

$(1)$ $d(\gamma(x),\gamma(y)) = d(x,y)$ for each $\gamma \in \GL_2(\cO);$    

$(2)$ $d(a,b) = 2 \|a,b\|$ for all $a,b \in \PP^1(K)$.
\end{proposition}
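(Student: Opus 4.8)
The plan is to verify the two properties directly from the definition (\ref{FRLdMetricDef}) of $d(x,y)$, exploiting the facts already collected in \S1 about $\diam_G$ and the join operation $\vee_G$.

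\smallskip
For part $(1)$, fix $\gamma \in \GL_2(\cO)$. First I would observe that $\gamma$ fixes $\zeta_G$ (since the stabilizer of $\zeta_G$ in $\GL_2(K)$ is $K^\times \cdot \GL_2(\cO)$, and scalars act trivially), and that $\gamma$ acts as a homeomorphism of $\BPP^1_K$. Because $\BPP^1_K$ is uniquely path-connected and $\gamma$ is a homeomorphism fixing $\zeta_G$, it carries the path $[x,\zeta_G]$ to the path $[\gamma(x),\zeta_G]$ and likewise for $y$; hence it carries the first meeting point of $[x,\zeta_G]$ and $[y,\zeta_G]$ to the first meeting point of $[\gamma(x),\zeta_G]$ and $[\gamma(y),\zeta_G]$, i.e. $\gamma(x \vee_G y) = \gamma(x) \vee_G \gamma(y)$. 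Then I would invoke the already-stated invariance $\diam_G(\gamma(z)) = \diam_G(z)$ for all $z \in \BPP^1_K$ and $\gamma \in \GL_2(\cO)$. Applying this to each of the three arguments $x \vee_G y$, $x$, $y$ appearing in (\ref{FRLdMetricDef}) gives $d(\gamma(x),\gamma(y)) = d(x,y)$ term by term.

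\smallskip
For part $(2)$, let $a, b \in \PP^1(K)$. Since $\diam_G(z) = 0$ precisely when $z \in \PP^1(K)$, the terms $\diam_G(a)$ and $\diam_G(b)$ in (\ref{FRLdMetricDef}) both vanish, so $d(a,b) = 2\,\diam_G(a \vee_G b)$. Now I would invoke the identity $\diam_G(a \vee_G b) = \|a,b\|$, which is stated in \S1 immediately after the discussion of the transitivity of $\GL_2(\cO)$; combining the two equalities yields $d(a,b) = 2\|a,b\|$. (If one prefers to keep the argument self-contained, one can also reduce to the case $a = \zeta_G \vee_G \infty$-type configurations using the $\GL_2(\cO)$-invariance from part $(1)$ together with the $\GL_2(\cO)$-invariance of $\|\cdot,\cdot\|$, but citing the stated identity is cleaner.)

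\smallskip
Neither part presents a genuine obstacle: the content is entirely bookkeeping against the properties recorded in \S1. The only point requiring a word of care is the claim $\gamma(x \vee_G y) = \gamma(x) \vee_G \gamma(y)$ in part $(1)$, which is where the homeomorphism property and the fixing of $\zeta_G$ are both used; once that is noted, the termwise invariance of $\diam_G$ finishes everything.
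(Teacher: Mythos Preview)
Your proposal is correct and follows essentially the same approach as the paper's proof. The paper is more terse---it invokes the $\GL_2(\cO)$-invariance of $\rho(x,y)$ together with the stabilizer fact (which via $\diam_G(x) = q^{-\rho(\zeta_G,x)}$ yields the $\diam_G$-invariance you cite directly), and for part~(2) it simply points to $\diam_G(a \vee_G b) = \|a,b\|$ exactly as you do; your version just spells out the join-preservation step more explicitly.
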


\begin{proof}  Assertion (1) follows from the definition of $d(x,y)$ and the fact that $K^\times \cdot \GL_2(\cO)$ 
stabilizes $\zeta_G$ and preserves the metric $\rho(x,y)$.  Assertion (2) follows from the fact that 
$\diam_G(a \vee_G b) = \|a,b\|$.
\end{proof} 

\smallskip
In addition to the balls $\cB_Q(\vv)^-$ and $\cB_\rho(x,r)^-$ introduced above,
we will use several other kinds of balls and discs.      
In naming them, we make the convention that {\em Roman letters} 
will be used for sets in $\AA^1(K)$ or $\PP^1(K)$,
and {\em script letters} for ones in $\BAA^1_K$ or $\BPP^1_K$.  
Also, we  speak of {\em discs} in $\AA^1(K)$ and $\BAA^1_K$, and {\em balls} in $\PP^1(K)$ and $\BPP^1_K$. 
 
For each $a \in K$ and $0 < r < \infty$ we have the classical discs   
\begin{equation*}
D(a,r)^- = \{z \in \AA^1(K) : |z-a| < r \}, \quad D(a,r) = \{z \in \AA^1(K) : |z-a| \le r \}. 
\end{equation*}
The associated Berkovich discs are 
\begin{equation*}
\cD(a,r)^- = \{x \in \BAA^1_K : \zeta_{a,r} \in (x,\infty] \}, \quad 
\cD(a,r)\  = \{x \in \BAA^1_K : \zeta_{a,r} \in [x,\infty] \} .
\end{equation*} 
Note that $\cD(a,r)^-$ is the path-component of $\BPP^1_K \backslash \{\zeta_{a,r}\}$ containing $D(a,r)^-$,
and $\cD(a,r)$ is the union of $\{\zeta_{a,r}\}$ and the path components of $\BPP^1_K \backslash \{\zeta_{a,r}\}$ 
which do not contain $\infty$.   If $\vv_a \in T_{\zeta_{a,r}}$ points  
towards $a$, and $\vv_\infty \in T_{\zeta_{a,r}}$ points towards $\infty$, then 
\begin{equation*}
\cD(a,r)^- \ = \ \cB_{\zeta_{a,r}}(\vv_a)^- \ , \quad \cD(a,r) = \BPP^1_K \backslash \cB_{Q_{a,r}}(\vv_\infty)^- \ .
\end{equation*} 
For either the weak or strong topology, 
$\cD(a,r)^-$ is open, and $\cD(a,r)$ is closed.

Given $a \in \BPP^1_K$ and $0 < r < 1$, we write
\begin{equation*}
B(a,r)^- = \{z \in \PP^1(K) : \|z,a\| < r \} \ , \quad B(a,r) = \{z \in \PP^1(K) : \|z,a\| \le r \}. 
\end{equation*}
There is a unique point $Q_{a,r} \in [a,\zeta_G]$ for which $\diam_G(Q_{a,r}) = r$.
The associated Berkovich balls are   
\begin{equation*}
\cB(a,r)^- = \{x \in \BPP^1_K : Q_{a,r} \in (x,\zeta_G] \} , \quad
\cB(a,r) =  \{z \in \BPP^1_K : Q_{a,r} \in [x,\zeta_G] \}. 
\end{equation*}
When $r = 1$, we define $B(a,1)^- = \bigcup_{r < 1} B(a,r)^-$ and $\cB(a,1)^- = \bigcup_{r < 1} \cB(a,r)^-$. 
Note that $\cB(a,r)^-$ is the path-component of $\BPP^1_K \backslash \{Q_{a,r}\}$ containing $B(a,r)^-$,
and $\cB(a,r)$ is the union of $\{Q_{a,r}\}$ and the path components of $\BPP^1_K \backslash \{Q_{a,r}\}$ 
which do not contain $\zeta_G$.  
For either the weak or strong topology, 
$\cB(a,r)^-$ is open, and $\cB(a,r)$ is closed.  If $\vv_a \in T_{Q_{a,r}}$ is the tangent vector pointing 
towards $a$, one has 
\begin{equation*}
\cB(a,r)^- \ = \ \cB_{Q_{a,r}}(\vv_a)^- \ , \quad \cB(a,r) = \BPP^1_K \backslash \cB_{Q_{a,r}}(\vv_{\zeta_G})^- \ .
\end{equation*} 
For any $\gamma \in \GL_2(\cO)$, one has $\gamma(\cB(a,r)^-) = \cB(\gamma(a),r)^-$ and 
$\gamma(\cB(a,r)) = \cB(\gamma(a),r)$.

\smallskip
Given a nonconstant function $\varphi(z) \in K(z)$, we define its Berkovich Lipschitz constant (relative to the 
Favre-Rivera-Letelier metric $d(x,y)$), to be 
\begin{equation} \label{LipschitzConstDef} 
\Lip_\Berk(\varphi) \ = \ \sup_{\substack{x, y \in \BPP^1_K \\ x \ne y}} \frac{d(\varphi(x), \varphi(y))}{d(x,y)} \ . 
\end{equation}  

\begin{proposition}  \label{LipInvariance} Let $\varphi(z) \in K(z)$ have degree $d \ge 1$.  
Then for any $\gamma_1, \gamma_2 \in \GL_2(\cO)$, one has $\Lip_\Berk(\gamma_1 \circ \varphi \circ \gamma_2) = \Lip_\Berk(\varphi)$.
\end{proposition}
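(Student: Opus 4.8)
The plan is to treat the statement as a change-of-variables identity for the supremum defining $\Lip_\Berk$, using only the $\GL_2(\cO)$-invariance of the metric $d(\cdot,\cdot)$ recorded in Proposition~\ref{FRLd_Properties}(1), together with the fact that each element of $\GL_2(\cO)$ acts as a bijection on $\BPP^1_K$. Set $\psi = \gamma_1 \circ \varphi \circ \gamma_2$; since $\deg(\gamma_1) = \deg(\gamma_2) = 1$ we have $\deg(\psi) = d \ge 1$, so $\psi$ is nonconstant and $\Lip_\Berk(\psi)$ is defined.

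First I would fix an ordered pair of distinct points $x, y \in \BPP^1_K$ and rewrite the difference quotient for $\psi$ at $(x,y)$. Applying Proposition~\ref{FRLd_Properties}(1) to $\gamma_1$ gives
\begin{equation*}
d(\psi(x),\psi(y)) \ = \ d\big(\gamma_1(\varphi(\gamma_2(x))),\,\gamma_1(\varphi(\gamma_2(y)))\big) \ = \ d\big(\varphi(\gamma_2(x)),\,\varphi(\gamma_2(y))\big) \ ,
\end{equation*}
while applying it to $\gamma_2$ gives $d(x,y) = d(\gamma_2(x),\gamma_2(y))$. Writing $x' = \gamma_2(x)$ and $y' = \gamma_2(y)$, these combine to
\begin{equation*}
\frac{d(\psi(x),\psi(y))}{d(x,y)} \ = \ \frac{d(\varphi(x'),\varphi(y'))}{d(x',y')} \ .
\end{equation*}

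Since $\gamma_2 \in \GL_2(\cO) \subset \GL_2(K)$ acts as a bijection of $\BPP^1_K$ (its inverse being the action of $\gamma_2^{-1}$), the assignment $(x,y) \mapsto (x',y')$ is a bijection of the set of ordered pairs of distinct points of $\BPP^1_K$ onto itself; in particular $x' \ne y'$ precisely when $x \ne y$. Taking the supremum over all such pairs on both sides of the last display then yields $\Lip_\Berk(\psi) = \Lip_\Berk(\varphi)$, which is the claim. There is no genuine obstacle here; the only point worth flagging is that one needs the action of $\GL_2(\cO)$ on the \emph{entire} Berkovich line, not merely on $\PP^1(K)$, which is available by the functoriality of the Berkovich construction.
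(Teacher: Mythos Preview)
Your proof is correct and follows exactly the same approach as the paper, which simply notes that the claim follows from the definition of $\Lip_\Berk(\varphi)$ together with the $\GL_2(\cO)$-invariance of $d(x,y)$. You have merely spelled out the change-of-variables argument in more detail than the paper does.
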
 

\begin{proof}  This follows from the definition of $\Lip_\Berk(\varphi)$
and the fact that $\GL_2(\cO)$ preserves $d(x,y)$. 
\end{proof}   

\section{Preliminary Lemmas} \label{PreliminaryLabelSection}

In this section we prove some lemmas which reduce bounding $\Lip_\Berk(\varphi)$ on $\BPP^1_K$ to bounding it 
on a restricted class of segments $[x,y]$.  

\begin{definition} \label{Radial-Limited}  Fix $0 < B_0 \le 1$.  
A segment $[b,c] \subset \BPP^1_K$ will be called {\em radial} if it is contained in a segment $[\xi,\zeta_G]$,
and it will be called {\em $B_0$-limited} if it is either contained in a segment $[\alpha,\xi]$ 
where  $\alpha \in \PP^1(K)$ and $\diam_G(\xi) = B_0$, 
or in a segment $[\xi,\zeta_G]$, where $\diam_G(\xi) = B_0$.
\end{definition}  

\begin{lemma} \label{DecompositionLemma}  Let $\varphi(z) \in K(z)$ have degree $d \ge 1$, put $B_0 = B_0(\varphi)$, 
and let $[b,c] \in \BPP^1_K$ be a segment.  Then there is a finite partition $\{a_1, \ldots, a_{n+1}\}$ of $[b,c]$ 
such that $a_1 = b$,  $a_{n+1} = c$, and each of $a_2, \ldots, a_n$ is of type {\rm II}, 
such that for each $i = 1, \ldots, n$, 

\begin{enumerate}
\item $\varphi$ maps the segment $[a_i,a_{i+1}]$ homeomorphically onto $[\varphi(a_i),\varphi(a_{i+1})];$

\item $[a_i,a_{i+1}]$ and $[\varphi(a_i),\varphi(a_{i+1})]$ are both radial, 
and $[a_i,a_{i+1}]$ is $B_0$-limited$;$ 

\item there is an integer $1 \le \delta_i \le d$ such that $\deg_\varphi(x) = \delta_i$ for each $x \in (a_i,a_{i+1});$ 

\item $\rho(\varphi(x),\varphi(y)) = \delta_i \cdot \rho(x,y)$ for all $x,y \in [a_i,a_{i+1}];$ and  

\item there are a constant $C_i > 0$ and an integer $k_i = \pm \delta_i$ such that for each $x \in [a_i,a_{i+1}]$,
if we put $r = \diam_G(x)$ and $R = \diam_G(\varphi(x))$, then $R = C_i \cdot r^{k_i}$. 
\end{enumerate}  
\end{lemma}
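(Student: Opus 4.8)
The plan is to build the partition by successive refinement, separating the purely tree-geometric requirements (``radial'' and ``$B_0$-limited'') from the dynamical ones, for which I would invoke the standard theory of the action of $\varphi$ on $\BHH^1_K$ --- local degrees $\deg_\varphi$, directional multiplicities $m_\varphi(\cdot,\vv)$, and the fact that $\varphi$ is ``piecewise monomial'' along segments (see \cite{B-R}, \cite{R-L1}).

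First I would reduce to radial segments. Along any geodesic $[b,c]$ the function $x\mapsto\diam_G(x)=q^{-\rho(x,\zeta_G)}$ is unimodal, attaining its maximum at $\beta:=b\vee_G c$; hence $[b,\beta]\subseteq[b,\zeta_G]$ and $[\beta,c]\subseteq[c,\zeta_G]$ are radial, and if $\beta$ is interior to $[b,c]$ the three directions at $\beta$ towards $b$, $c$, $\zeta_G$ are distinct, so $\beta$ is of type~II. So I cut at $\beta$. On each radial segment $S\subseteq[\eta,\zeta_G]$ so obtained I then cut at the unique point of $\diam_G$-level $B_0:=B_0(\varphi)$ lying in the interior of $S$; it is of type~II, since $B_0\in|K^{\times}|$ by Proposition~\ref{B_0RationalityProp}. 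Prolonging the ray below $\eta$ to a point of type~I (type~IV being handled by continuity), the sub-piece below the level-$B_0$ point lies in $[\alpha,\xi]$ with $\alpha\in\PP^1(K)$ and $\diam_G(\xi)=B_0$, and the sub-piece above it lies in $[\xi,\zeta_G]$; thus all pieces are now radial and $B_0$-limited.

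Next I would invoke the structure theory: along any segment of $\BHH^1_K$ the functions $x\mapsto\deg_\varphi(x)$ and $x\mapsto m_\varphi(x,\vv)$ (the directional multiplicity in the direction of increasing parameter) are piecewise constant with finitely many jumps, all of type~II; on each sub-segment between jumps one has $\diam_G(\varphi(x))=C\,\diam_G(x)^{k}$ for some $C>0$ and $k\in\ZZ\setminus\{0\}$ with $|k|$ equal to that directional multiplicity; $\varphi$ is locally injective on $[b,c]$ away from finitely many ``folding'' points; and fibres of $\varphi$ over type~II points are finite. I add to the partition all these finitely many breakpoints and folding points, together with finitely many further points ensuring (via finiteness of $\varphi^{-1}(\varphi([b,c]))$) that at each interior point $x$ of a piece every point of $\varphi^{-1}(\varphi([b,c]))$ lies in one of the two directions along $[b,c]$ at $x$. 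Refining through type~II points yields finitely many pieces $[a_i,a_{i+1}]$, all interior division points being of type~II.

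Finally I would verify $(1)$--$(5)$ on a piece $[a_i,a_{i+1}]$, which is radial and $B_0$-limited. Having cut at all folding points and monomial breakpoints, $\varphi$ is injective and monomial on the piece, so $\varphi([a_i,a_{i+1}])=[\varphi(a_i),\varphi(a_{i+1})]$ (giving $(1)$) and $\diam_G(\varphi(\cdot))$ is monotone along it, so the image is radial (giving $(2)$). As $\deg_\varphi$ is constant on $(a_i,a_{i+1})$, put $\delta_i:=\deg_\varphi|_{(a_i,a_{i+1})}$ (this is $(3)$); the monomial relation $R=C_i\,r^{k_i}$ is $(5)$; and since $\rho(x,y)=|\log\diam_G(x)-\log\diam_G(y)|$ on a radial segment, taking logarithms gives $\rho(\varphi(x),\varphi(y))=|k_i|\,\rho(x,y)$, which is $(4)$ once one knows $\delta_i=|k_i|$. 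This identity --- equivalently, that the direction $\vv^{+}$ along the piece towards $\zeta_G$ is the only $\varphi_*$-preimage of $\varphi_*(\vv^{+})$ at each interior type~II point $x$, so $m_\varphi(x,\vv^{+})=\deg_\varphi(x)$ --- is where $B_0$-limitedness is used. For a first-kind piece every $\vv\neq\vv^{+}$ at $x$ satisfies $\cB_x(\vv)^-=\cB(a',\diam_G(x))^-$ for a classical $a'$ with $\diam_G(x)\le B_0$, so $\varphi(\cB_x(\vv)^-)$ is a proper ball $\cB_{\varphi(x)}(\varphi_*\vv)^-$; were $\varphi_*(\vv)=\varphi_*(\vv^{+})$, this ball would equal $\cB_{\varphi(x)}(\varphi_*\vv^{+})^-$, which contains $\varphi(a_{i+1})$, forcing $\cB_x(\vv)^-$ to meet the fibre of $\varphi$ over $\varphi(a_{i+1})$ --- so a point of that fibre would lie in the lateral direction $\vv$ at $x$, which our last round of cuts excluded. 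For a piece abutting $\zeta_G$, where the lateral balls at an interior point exceed radius $B_0$, I would argue separately, using the ball property at the level-$B_0$ boundary point together with radiality of the image to see that the relevant reduction of $\varphi$ is conjugate to a polynomial. The hard part will be exactly this last identity, $\deg_\varphi=|k_i|$ on the interior of each piece: that $\varphi$ cannot fold a lateral direction onto the direction towards $\zeta_G$, so that the latter carries the full local degree --- which is precisely what the ball-mapping property defining $B_0(\varphi)$ is designed to prevent.
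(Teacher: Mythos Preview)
Your strategy---make pieces radial and $B_0$-limited first, then invoke the piecewise-monomial structure of $\varphi$---can be made to work, but the paper's proof is far shorter, and the step you single out as ``the hard part'' rests on a misconception about where the difficulty lies.

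The paper simply \emph{cites} Rivera-Letelier (\cite{R-L2}, Corollaries~4.7, 4.8; alternatively \cite{B-R}, Theorem~9.33) for a partition already satisfying (1), (3) and (4).  It then refines to obtain (2) by three transparent adjunctions: add $\zeta_G$ and the points $a_i\vee_G a_{i+1}$ to make source pieces radial; add the level-$B_0$ point on each piece to make them $B_0$-limited; add the finitely many preimages of $\zeta_G$ lying on $[b,c]$ to make image pieces radial.  Condition (5) then drops out of (2) and (4): once both $[a_i,a_{i+1}]$ and its image are radial, $\rho(\zeta_G,\cdot)=-\log_q\diam_G(\cdot)$ along each, and exponentiating the relation in (4) gives $R=C_i\,r^{\pm\delta_i}$.

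The identity $|k_i|=\delta_i$ that you flag as ``the hard part'' is \emph{already contained in the cited theorem} and has nothing to do with $B_0$-limitedness.  Once $\deg_\varphi$ is constant on the open piece $(a_i,a_{i+1})$---which is precisely what Rivera-Letelier's partition arranges---the $\rho$-scaling factor equals that constant automatically: at the dense set of type~III interior points there are only two tangent directions, so the directional multiplicity there \emph{is} the local degree, and both functions are constant on the piece.  The $B_0$-limitedness clause in (2) is present only because Corollary~\ref{SupCor} needs it downstream; it plays no role in this lemma.  Your attempt to force $|k_i|=\delta_i$ via the ball-mapping property is therefore unnecessary, and as written it also has gaps: the ``last round of cuts'' appeals to ``finiteness of $\varphi^{-1}(\varphi([b,c]))$,'' but that set is an infinite graph (you presumably mean the finitely many points where its other branches attach to $[b,c]$); and for pieces contained in $[\xi,\zeta_G]$ you offer only ``I would argue separately,'' which is not a proof.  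Drop the entire $B_0$-based argument for $|k_i|=\delta_i$ and either cite Rivera-Letelier directly, as the paper does, or observe the type~III density argument above.
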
 

\begin{proof} The existence of a partition $\{a_1, \ldots, a_{n+1}\}$ satisfying conditions $(1)$, $(3)$ and $(4)$ 
is due to Rivera-Letelier (see \cite{R-L2}, Corollaries 4.7 and 4.8, or \cite{B-R}, Theorem 9.33).  To refine the partition  
so that it satisfies (2), successively carry out the following adjunctions:
\begin{itemize}
\item[(A)]  To assure that each segment $[a_i,a_{i+1}]$ is radial, adjoin $\zeta_G$ to the partition if $\zeta_G \in [b,c]$, 
and for each $[a_i,a_{i+1}]$ which is now not radial, let $t_i = a_i \wedge_G a_{i+1}$ 
be the nearest point in $[a_i,a_{i+1}]$ to $\zeta_G$, and adjoint it to the partition. 
\item[(B)] To assure that each each segment $[a_i,a_{i+1}]$ is $B_0$-limited, 
for each $[a_i,a_{i+1}]$ which is not $B_0$-limited, let $\xi_i \in [a_i,a_{i+1}]$ be the unique point
with $\diam_G(\xi_i) = B_0$, and adjoin it to the partition;  
\item[(C)]  To assure that each segment $[\varphi(a_i),\varphi(a_{i+1})]$ is radial, 
consider each of the finitely many pre-images of $\zeta_G$ under $\varphi$, and if it belongs to $[b,c]$,
then adjoint it to the partition.  
\end{itemize} 
Assertion (5) is now immediate.  
There is a base $q > 1$ such that for each $x \in \HH^1_K$ one has $\rho(\zeta_G,x) = -\log_q(\diam_G(x))$.  
Hence $\rho(\zeta_G,x) = -\log_q(r)$ and $\rho(\zeta_G,\varphi(x)) = -\log_q(R)$.  By (2) and (4), 
for an appropriate choice of $k_i = \pm \delta_i$, for each  $x \in [a_i,a_{i+1}]$, 
\begin{equation*}
\rho(\zeta_G,\varphi(x))-\rho(\zeta_G,\varphi(a_i)) \ = \ k_i \cdot \big(\rho(\zeta_G,x)-\rho(\zeta_G,a_i)\big) 
\end{equation*}  
and (5) follows by exponentiating this.
\end{proof} 

\begin{corollary} \label{SupCor}  Let $\varphi(z) \in K(z)$ have degree $d \ge 1$, and put $B_0 = B_0(\varphi)$.
Let $\cI(B_0)$ be the collection of all radial segments of the form $[\alpha,\xi]$ or $[\xi,\zeta_G]$, 
where $\alpha \in \PP^1(K)$ and $\diam_G(\xi) = B_0$.  
If $L \in \RR$ is an upper bound for $\{\Lip_\Berk(\varphi\vert_I)  : I \in \cI(B_0)\}$, 
then $L$ is an upper bound for $\Lip_\Berk(\varphi)$.   
\end{corollary}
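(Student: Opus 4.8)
The plan is to reduce the supremum defining $\Lip_\Berk(\varphi)$ to a supremum over the special segments produced by Lemma~\ref{DecompositionLemma}, and then to quote the hypothesis on $\cI(B_0)$.

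First I would fix $x \neq y$ in $\BPP^1_K$ and apply Lemma~\ref{DecompositionLemma} to the segment $[x,y]$, obtaining a partition $x = a_1, a_2, \ldots, a_{n+1} = y$ along $[x,y]$ in which every $[a_i,a_{i+1}]$ is radial and $B_0$-limited. Since the $a_i$ occur in order along the path and $d(\cdot,\cdot)$ is additive on paths, $d(x,y) = \sum_{i=1}^{n} d(a_i,a_{i+1})$ with every summand positive; and the triangle inequality alone --- no injectivity of $\varphi$ on $[x,y]$ is needed, since $\varphi$ may fold $[x,y]$ --- gives $d(\varphi(x),\varphi(y)) \leq \sum_{i=1}^{n} d(\varphi(a_i),\varphi(a_{i+1}))$. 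Combining these with the elementary inequality $(\sum p_i)/(\sum q_i) \leq \max_i (p_i/q_i)$, valid for $p_i \geq 0$ and $q_i > 0$, yields
\begin{equation*}
\frac{d(\varphi(x),\varphi(y))}{d(x,y)} \ \leq \ \max_{1 \leq i \leq n}\ \frac{d(\varphi(a_i),\varphi(a_{i+1}))}{d(a_i,a_{i+1})}\ .
\end{equation*}

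Next I would show each term on the right is $\leq L$. Relabel so that $a_{i+1}$ is the endpoint of $[a_i,a_{i+1}]$ nearer $\zeta_G$; then $\diam_G$ is monotone along $[a_i,a_{i+1}]$, and unwinding Definition~\ref{Radial-Limited} one is in one of two cases. If $\diam_G \leq B_0$ throughout the segment, then $[a_i,a_{i+1}] \subseteq [\alpha,\xi]$, where $\alpha \in \PP^1(K)$ is chosen with $a_i \in [\alpha,\zeta_G]$ and $\xi$ is the point of $[\alpha,\zeta_G]$ with $\diam_G(\xi) = B_0$. If $\diam_G \geq B_0$ throughout, then $[a_i,a_{i+1}] \subseteq [\xi,\zeta_G]$, where $\xi$ is a point with $\diam_G(\xi) = B_0$ lying below $a_i$. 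In either case the enclosing segment is a member $I$ of $\cI(B_0)$, so $d(\varphi(a_i),\varphi(a_{i+1}))/d(a_i,a_{i+1})$ is among the ratios whose supremum is $\Lip_\Berk(\varphi\vert_I)$, hence is $\leq \Lip_\Berk(\varphi\vert_I) \leq L$. A point $\alpha$ (resp.\ $\xi$) as above exists whenever $a_i$ is of type {\rm II}, which is the case for $2 \leq i \leq n$; type {\rm IV} endpoints, which can occur only on the two extreme pieces, are handled by approximating the piece from within by sub-segments with type {\rm II} endpoints and passing to the limit using the continuity of $\varphi$ and of $d$.

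Combining the two steps gives $d(\varphi(x),\varphi(y))/d(x,y) \leq L$ for all $x \neq y$, i.e.\ $\Lip_\Berk(\varphi) \leq L$. There is no genuine obstacle here beyond Lemma~\ref{DecompositionLemma} itself: the argument is the additivity of $d$ along paths against its subadditivity after pushing forward by $\varphi$, followed by the mediant inequality. The only point that wants a little care is the bookkeeping that a radial $B_0$-limited segment really lies inside a single member of $\cI(B_0)$, which amounts to reconciling the definitions of ``radial'', ``$B_0$-limited'' and $\cI(B_0)$, using that every point of $\BHH^1_K$ has points of $\PP^1(K)$ below it, plus the continuity argument for type {\rm IV} endpoints.
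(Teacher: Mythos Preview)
Your proposal is correct and follows essentially the same approach as the paper: apply Lemma~\ref{DecompositionLemma}, use additivity of $d$ on paths together with the triangle inequality for the images, observe that each $B_0$-limited radial piece lies in some $I \in \cI(B_0)$, and handle non--type~II endpoints by a limiting argument. The only cosmetic difference is that the paper first treats the case where both endpoints are of type~II and then passes to general endpoints by exhausting $(b,c)$ with type~II subsegments, whereas you fold the approximation for type~IV extreme endpoints into the main argument.
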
 

\begin{proof}  We must show that $d(\varphi(b),\varphi(c)) \le L \cdot d(b,c)$ for all $b, c \in \BPP^1_K$.
This is trivial if $b = c$, so we can assume $b \ne c$.

First suppose $b$ and $c$ are of type II, and take 
a partition $\{a_1, \ldots, a_{n+1}\}$ 
of $[b,c]$ satisfying the conditions of Lemma \ref{DecompositionLemma}.
Each subsegment $[a_i,a_{i+1}]$ is contained in some $I \in \cI(B_0)$, 
so $\Lip_\Berk(\varphi\vert_{[a_i,a_{i+1}]}) \le \Lip_\Berk(\varphi\vert_I) \le L$. 
Furthermore $d(b,c) = \sum_{i=1}^n d(a_i,a_{i+1})$, so 
\begin{eqnarray*} 
d(\varphi(b),\varphi(c)) & \le & \sum_{i=1}^n d\big(\varphi(a_i),\varphi(a_{i+1})\big) \ \le \ 
\sum_{i=1}^n \Lip_\Berk(\varphi\vert_{[a_i,a_{i+1}]}) \cdot d(a_i,a_{i+1}) \\
                         & \le & L \cdot \sum_{i=1}^n d(a_i,a_{i+1}) \ = \ L \cdot d(b,c) \ . 
\end{eqnarray*}  

Now let $b \ne c$ in $\BPP^1_K$ be arbitrary.  Choose an exhaustion of $(b,c)$ by segments 
\begin{equation*}
[b^{(1)},c^{(1)}] \ \subset \ [b^{(2)},c^{(2)}] \ \subset \ \cdots \subset \ [b^{(j)},c^{(j)}]
\ \subset \ \cdots \subset \ (b,c) \ .
\end{equation*} 
with type II endpoints.  Then 
\begin{equation*}
d\big(\varphi(b),\varphi(c)\big) \ = \ \lim_{j \rightarrow \infty} d\big(\varphi(b^{(j)}),\varphi(c^{(j)})\big)
              \ \le \ \lim_{j \rightarrow \infty} L \cdot d\big(b^{(j)},c^{(j)}\big) \ = \ L \cdot d(b,c) \ .
\end{equation*}
Letting $b$ and $c$ range over $\BPP^1_K$, we see that $\Lip_\Berk(\varphi) \le L$.  
\end{proof} 

\begin{corollary} \label{DerivCor}  Let $\varphi(z) \in K(z)$ have degree $d \ge 1$, and put $B_0 = B_0(\varphi)$.
Let $I = [b,c]$ be a segment in $\BPP^1_K$ with $b \ne c$, and let $\{a_1, \ldots, a_{n+1}\}$ be a partition 
of $[b,c]$ with the properties in Lemma {\rm \ref{DecompositionLemma}}.  For each $i = 1, \ldots, n$, 
put $r_i = \min(\diam_G(a_i),\diam_G(a_{i+1}))$, $s_i = \max(\diam_G(a_i),\diam_G(a_{i+1}))$, 
and define $F_{\varphi,i} : [r_i,s_i] \rightarrow \RR$ by $F_{\varphi,i}(r) = C_i \cdot r^{k_i}$.  Then 
\begin{equation*} 
\Lip_\Berk(\varphi\vert_I) \ = \ \textstyle{\max_{1 \le i \le n} 
\Big(\sup_{r \in (r_i,s_i)} \big|F_{\varphi,i}^{\prime}(r)\big| \Big)} \ .
\end{equation*} 
\end{corollary}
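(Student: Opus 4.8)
The plan is to exploit the fact that on a radial segment the Favre--Rivera-Letelier metric $d(x,y)$ coincides with the Euclidean metric in the coordinate $\diam_G$, which reduces the statement to the one-variable mean value theorem applied to the monomials $F_{\varphi,i}(r)=C_i r^{k_i}$. First I would record the following: if $[u,w]$ is a radial segment, say $[u,w]\subseteq[\xi,\zeta_G]$, then $d(x,y)=|\diam_G(x)-\diam_G(y)|$ for all $x,y\in[u,w]$. Indeed, $\diam_G$ increases strictly toward $\zeta_G$ along $[\xi,\zeta_G]$, so if $\diam_G(x)\le\diam_G(y)$ then $y$ lies on the path $[x,\zeta_G]$, whence $x\vee_G y=y$; substituting this into the definition $(\ref{FRLdMetricDef})$ gives $d(x,y)=\diam_G(y)-\diam_G(x)$.

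Next, fix $i$. By Lemma~\ref{DecompositionLemma}, $\varphi$ maps $[a_i,a_{i+1}]$ homeomorphically onto $[\varphi(a_i),\varphi(a_{i+1})]$, both segments are radial, and $\diam_G(\varphi(x))=F_{\varphi,i}(\diam_G(x))$ for $x\in[a_i,a_{i+1}]$; moreover $\diam_G$ restricts to a homeomorphism of $[a_i,a_{i+1}]$ onto $[r_i,s_i]$. Combining these with the observation above, for all $x\ne y$ in $[a_i,a_{i+1}]$ the ratio $d(\varphi(x),\varphi(y))/d(x,y)$ equals $|F_{\varphi,i}(\alpha)-F_{\varphi,i}(\beta)|/|\alpha-\beta|$, where $\alpha=\diam_G(x)$ and $\beta=\diam_G(y)$ run over all distinct pairs in $[r_i,s_i]$. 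Since $F_{\varphi,i}$ is $C^1$ on $[r_i,s_i]$ --- here one notes $r_i>0$ whenever $k_i<0$, because by property~$(5)$ a partition endpoint of Gauss-diameter $0$ would have image of Gauss-diameter $C_i\cdot 0^{k_i}$, which is impossible when $k_i<0$ since $\diam_G\le1$ --- the elementary fact that the Lipschitz constant of a $C^1$ function on an interval equals the supremum of the absolute value of its derivative (the mean value theorem for one inequality, passing to the limit in the difference quotient for the other) gives that the supremum of the ratio over $x\ne y$ in $[a_i,a_{i+1}]$ is exactly $L_i:=\sup_{r\in(r_i,s_i)}|F_{\varphi,i}'(r)|$. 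In particular $\Lip_\Berk(\varphi\vert_I)\ge\max_{1\le i\le n}L_i=:L$.

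For the reverse inequality I would take arbitrary $x\ne y$ in $I=[b,c]$, say $x\in[a_i,a_{i+1}]$ and $y\in[a_j,a_{j+1}]$ with $i\le j$. Since $[x,y]\subseteq[b,c]$ passes through $a_{i+1},\dots,a_j$ in order, the additivity of $d$ on paths gives $d(x,y)=d(x,a_{i+1})+\sum_{k=i+1}^{j-1}d(a_k,a_{k+1})+d(a_j,y)$ (read appropriately when $i=j$). Each of $[x,a_{i+1}]$, $[a_k,a_{k+1}]$, $[a_j,y]$ lies in a single subsegment of the partition, so by the previous paragraph $d(\varphi(u),\varphi(w))\le L\cdot d(u,w)$ for all $u,w$ in it; summing these estimates and applying the triangle inequality for $d$ yields $d(\varphi(x),\varphi(y))\le L\cdot d(x,y)$, hence $\Lip_\Berk(\varphi\vert_I)\le L$. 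Together with the previous inequality this gives the asserted equality. I do not anticipate a real obstacle: the only places calling for care are the equivalence between difference quotients of $F_{\varphi,i}$ and $\sup|F_{\varphi,i}'|$, and the bookkeeping for pairs $x,y$ lying in different subsegments --- both routine once the radial-segment observation has linearized the metric.
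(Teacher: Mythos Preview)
Your proof is correct and follows essentially the same approach as the paper's: reduce to a single subsegment, identify $d(x,y)$ with $|\diam_G(x)-\diam_G(y)|$ on radial segments, and apply the Mean Value Theorem together with continuity of $F_{\varphi,i}'$ to get both inequalities. You are in fact more explicit than the paper on two points: you spell out why $d$ linearizes to $|\Delta\diam_G|$ on radial segments, and you supply the additivity-plus-triangle-inequality argument for pairs $x,y$ lying in different subsegments, which the paper absorbs into the phrase ``It suffices to show that for each $i$\ldots''.
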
 

\begin{proof}  It suffices to show that for each $i$, 
$\Lip_\Berk(\varphi\vert_{[a_i,a_{i+1}]}) = \sup_{r \in (r_i,s_i)} |F_{\varphi,i}^{\prime}(r)|$.  

Take $x \ne y$ in $[a_i,a_{i+1}]$, and put $u = \diam_G(x)$, $v = \diam_G(y)$.  Without loss we can assume that $r < s$.  
By the Mean Value Theorem there is an $r_* \in (u,v)$ such that
\begin{equation} \label{MVT} 
F_{\varphi,i}(v) - F_{\varphi,i}(u) \ = \ F_{\varphi,i}^{\prime}(r_*) \cdot (v-u) \ ,
\end{equation} 
so $d(\varphi(x),\varphi(y)) = |F_{\varphi,i}^{\prime}(r_*)| \cdot d(x,y)$.  Hence 
$\Lip_\Berk(\varphi\vert_{[a_i,a_{i+1}]}) \le \sup_{r \in (r_i,s_i)} |F_{\varphi,i}^{\prime}(r)|$. 

The opposite inequality 
follows from the fact that $F_{\varphi,i}^{\prime}(r)$ is continuous:  for each $r_\# \in (r_i,s_i)$ and each $\varepsilon > 0$,
there is a $\delta > 0$ such that $[r_\#-\delta,r_\#+\delta] \subset (r_i,s_i)$, and 
$|F_{\varphi,i}^\prime(t) - F_{\varphi,i}^\prime(r_\#)| < \varepsilon$ for all $t \in [r_\#-\delta,r_\#+\delta]$.
Take $x, y \in [a_i,a_{i+1}]$ with $r_\#-\delta < \diam_G(x) < \diam_G(y) < r_\#+\delta$, and let $r_*$ be as in (\ref{MVT}) 
for this choice of $x, y$.  Then $|F_{\varphi,i}^\prime(r_*) - F_{\varphi,i}^\prime(r_\#)| < \varepsilon$.  
It follows that 
$\Lip_\Berk(\varphi\vert_{[a_i,a_{i+1}]}) \ge \sup_{r \in (r_i,s_i)} |F_{\varphi,i}^{\prime}(r)|$.  
\end{proof}

\begin{lemma} \label{fprimeMonoLemma}
Let $\Phi(z) \in K(z)$ have degree $d \ge 1$; write $B_0 = B_0(\Phi)$, put $c_0 = \Phi(0)$, and assume $\Phi(D(0,B_0)^-) = D(c_0,R)^-$.  Expand 
\begin{equation*}
\Phi(z) \ = \ c_0 + (c_1 z + c_2 z^2 + \cdots c_n z^n) \cdot U(z) \ , 
\end{equation*}
on $D(0,B_0)^-$, where $U(z)$ is a unit power series.      
Then we can partition $[0,B_0]$ into finitely many subintervals $[r_i,r_{i+1}]$,
where $0 = r_1 < \cdots < r_{\ell+1} = B_0$, such that on $[r_i,r_{i+1}]$ we have  
\begin{equation*}
f_\Phi(r) \ = \ f_i(r) \ := \ |c_{k(i)}| \cdot r^{k(i)} 
\end{equation*} 
for a suitable index $k(i)$.  Let $f_\Phi^\prime(r) = \lim_{h \rightarrow 0^+} (f_\Phi(r+h)-f_\Phi(r))/h$ 
be the right-derivative of $f$ on $[0,B_0)$.  Then $f_\Phi^\prime(r)$ is non-decreasing on $[0,B_0)$,
and for each $i = 1, \ldots, \ell-1$ we have $k(i) \le k(i+1)$.   
\end{lemma}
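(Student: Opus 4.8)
The plan is to deduce the whole lemma from a single identity: on all of $[0,B_0]$ one has $f_\Phi(r) = \max_{1 \le k \le n} |c_k|\, r^k$. Once this is in place, the partition, the inequality $k(i) \le k(i+1)$, and the monotonicity of $f_\Phi'$ are all elementary facts about a maximum of finitely many monomials.

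\emph{Establishing the monomial formula.} Since $\Phi(D(0,B_0)^-) = D(c_0,R)^-$ is a disc rather than all of $\PP^1$, $\Phi$ has no pole on $D(0,B_0)^-$ and is given there by the convergent series $c_0 + (c_1 z + \cdots + c_n z^n)\,U(z)$. Because $U$ is a unit power series it has no zero on $D(0,B_0)^-$ and $|U|$ is constant there; normalizing $U(0) = 1$, this constant is $1$. Hence for $0 \le r < B_0$ the image $\Phi(\zeta_{0,r})$ is the point $\zeta_{c_0,f_\Phi(r)}$ whose radius $f_\Phi(r)$ is the Gauss norm of $\Phi(z) - c_0$ on the disc of radius $r$; using multiplicativity of the Gauss norm and $|U| \equiv 1$, this equals $\max_{1 \le k \le n} |c_k|\, r^k$. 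The endpoint $r = B_0$ is handled by left-continuity of $f_\Phi$ (equivalently, from $R = \max_k |c_k|\,B_0^k$). Note that every exponent that occurs is $\ge 1$ because the constant term $c_0$ has been stripped off.

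\emph{The partition and the derivative.} For each $k$ with $c_k \ne 0$ let $W_k = \{r \in [0,B_0] : |c_k|\, r^k \ge |c_j|\, r^j \text{ for all } j\}$ be the set where the $k$-th monomial realizes the maximum. For $j < k$, the condition $|c_k|\, r^k \ge |c_j|\, r^j$ is either vacuous or equivalent to a lower bound $r \ge (|c_j|/|c_k|)^{1/(k-j)}$; for $j > k$ it is either vacuous or an upper bound on $r$. Thus each $W_k$ is a closed subinterval of $[0,B_0]$, the $W_k$ cover $[0,B_0]$ (a finite maximum is attained), and --- crucially, using $B_0 \le 1$, so that of two monomials the one of smaller exponent dominates for smaller $r$ --- $W_j$ lies entirely to the left of $W_k$ whenever $j < k$. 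Listing the nondegenerate $W_k$ from left to right yields the partition $0 = r_1 < \cdots < r_{\ell+1} = B_0$ with winning exponents $k(1) < k(2) < \cdots < k(\ell)$, each $\ge 1$, on which $f_\Phi = f_i$; in particular $k(i) \le k(i+1)$. For the right-derivative: on the interior of $[r_i,r_{i+1}]$ one has $f_\Phi'(r) = k(i)\,|c_{k(i)}|\, r^{k(i)-1}$, which is nondecreasing since $k(i) - 1 \ge 0$; and at a breakpoint $r_{i+1}$, continuity of $f_\Phi$ forces $|c_{k(i)}|\, r_{i+1}^{k(i)} = |c_{k(i+1)}|\, r_{i+1}^{k(i+1)}$, so dividing by $r_{i+1}$ the left and right one-sided derivatives at $r_{i+1}$ are $k(i)\,m$ and $k(i+1)\,m$ for the common value $m = |c_{k(i)}|\, r_{i+1}^{k(i)-1} = |c_{k(i+1)}|\, r_{i+1}^{k(i+1)-1}$, hence the derivative jumps upward because $k(i) \le k(i+1)$. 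Combining these, $f_\Phi'$ is nondecreasing on $[0,B_0)$.

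\emph{Main obstacle.} The only substantive point is the monomial formula: one must check it remains valid up to and including $r = B_0$, and that ``unit power series'' really does force $|U|$ to be constant over the whole range $[0,B_0]$ --- which is precisely where the defining property of $B_0(\Phi)$ (the threshold below which $\Phi$ carries balls to balls) is used. Granting that, the combinatorial step and the derivative computation above are routine.
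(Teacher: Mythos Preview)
Your proof is correct and takes essentially the same approach as the paper: both rest on the identity $f_\Phi(r)=\max_{1\le k\le n}|c_k|\,r^k$ and its piecewise-monomial structure. The paper's argument is terser and runs in the reverse order—it takes the partition for granted, first asserts $f_{i-1}'(r_i)\le f_i'(r_i)$ at each break point, and then deduces $k(i-1)\le k(i)$ by dividing through by $f_\Phi(r_i)/r_i$; your construction via the intervals $W_k$ makes the same point more transparently. One small remark: your invocation of $B_0\le 1$ to order the $W_k$ is unnecessary—the fact that the winning exponent is nondecreasing in $r$ is just the convexity of $\log f_\Phi$ as a function of $\log r$, and holds for any range of $r>0$.
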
 

\noindent{\bf Remark.} There is a minimal partition with the properties in Lemma \ref{fprimeMonoLemma}, 
which has the additional property that $k(i) < k(i+1)$ for $i = 1, \ldots, \ell-1$.  However, in the applications
the partition we use may not be minimal, so we only assume that  $k(i) \le k(i+1)$.  

\begin{proof}
We will regard each $f_i(r) = \ |c_{k(i)}| \cdot r^{k(i)}$ as defined for all $r \ge 0$.  
Clearly $f_i(r)$ is continuous and monotone increasing, 
and $f_i^\prime(r) = k(i) |c_{k(i)}| r^{k(i)-1}$ is continuous and nondecreasing. 
   
Since $f_\Phi(r)$ is continuous and monotone increasing,  
at each break point $r_i$ we must have $f_{i-1}^\prime(r_i) \le f_i^\prime(r_i)$.  Thus $f_\Phi^{\prime}(r)$ is non-decreasing.
Furthermore at each such $r_i$
\begin{equation*} 
k(i-1) \cdot \frac{f_{i-1}(r_i)}{r_i} \ = \ f_{i-1}^\prime(r_i) 
\ \le \ f_i^{\prime}(r_i) \ = \ k(i) \cdot \frac{f_i(r_i)}{r_i} \ ,
\end{equation*}
so since $f_{i-1}(r_i) = f_i(r_i)$ we must have $k(i-1) \le k(i)$.
\end{proof} 

\section{Lipschitz constants for Linear Fractional Transformations} \label{LinearBoundSection}

When $\varphi \in \PGL_2(K)$, one can find its Lipschitz constants exactly:

\begin{theorem} \label{LinearThm} Let $K$ be a complete, algebraically closed nonarchimedean field,  
and let $\varphi(z) = (az+b)/(cz+d) \in K(z)$ have degree $d =1$. Then $B_0(\varphi) = 1$, and 
\begin{eqnarray*}
\Lip_\Berk(\varphi) & = & \Lip_{\PP^1(K)}(\varphi) \ = \ \frac{1}{\GIR(\varphi)} \ = \ \frac{1}{\GPR(\varphi)} \\
                    & = & \frac{1}{|\Res(\varphi)|} \ = \ \frac{\max(\,|a|,|b|,|c|,|d|\,)}{|ad-bc|} \ .
\end{eqnarray*}
\end{theorem}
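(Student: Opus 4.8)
The plan is to conjugate $\varphi$ into the normal form $\psi(z) = \lambda z$ with $0 < |\lambda| \le 1$, where all six quantities can be computed by hand, and then transport the result back. Each of $\Lip_\Berk$, $\Lip_{\PP^1(K)}$, $\GIR$, $\GPR$, $B_0$ and $|\Res|$ depends only on $\varphi$ as a self-map of $\BPP^1_K$ and is invariant under pre- and post-composition by $\GL_2(\cO)$: for $\Lip_\Berk$ this is Proposition \ref{LipInvariance}, for $\Lip_{\PP^1(K)}$ it follows from the $\GL_2(\cO)$-invariance of $\|\cdot,\cdot\|$, for $\GIR$, $\GPR$ and $B_0$ it follows since $\GL_2(\cO)$ fixes $\zeta_G$, preserves $\diam_G$, and sends balls $\cB(a,r)^-$ to balls, and for $|\Res|$ it is recorded in \S4. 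Writing $M = \left(\begin{smallmatrix}a&b\\c&d\end{smallmatrix}\right)$, elementary-divisor theory over the valuation ring $\cO$ gives $M = \gamma_1\,\diag(\lambda_1,\lambda_2)\,\gamma_2$ with $\gamma_1,\gamma_2 \in \GL_2(\cO)$ and $|\lambda_1| \ge |\lambda_2| > 0$; since $\GL_2(\cO)$ preserves the maximum norm of a matrix and $|\det\gamma_i| = 1$, one has $|\lambda_1| = \max(|a|,|b|,|c|,|d|)$ and $|\lambda_1\lambda_2| = |ad-bc|$. The diagonal factor represents $z \mapsto (\lambda_1/\lambda_2)z$; composing if necessary with the inversion $\iota(z) = 1/z$, which lies in $\GL_2(\cO)$ and inverts the multiplier, I reduce to $\psi(z) = \lambda z$ with $\lambda = \lambda_2/\lambda_1$ and $|\lambda| = |ad-bc|/\max(|a|,|b|,|c|,|d|)^2 \le 1$. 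It then suffices to prove $B_0(\psi) = 1$ and $\Lip_\Berk(\psi) = \Lip_{\PP^1(K)}(\psi) = 1/\GIR(\psi) = 1/\GPR(\psi) = 1/|\Res(\psi)| = 1/|\lambda|$; taking the original representation $(az+b,cz+d)$ normalized, so $\max(|a|,|b|,|c|,|d|) = 1$, then identifies $1/|\lambda|$ with $\max(|a|,|b|,|c|,|d|)/|ad-bc|$.

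For $\psi(z) = \lambda z$ one has $\psi(\zeta_{a,r}) = \zeta_{\lambda a,\,|\lambda| r}$, since $z \mapsto \lambda z$ carries $D(a,r)$ to $D(\lambda a,|\lambda| r)$. As $\psi$ is a homeomorphism of $\BPP^1_K$, it carries each open ball $\cB(a,r)^-$ — a connected component of $\BPP^1_K\setminus\{Q_{a,r}\}$ — onto a component of $\BPP^1_K\setminus\{\psi(Q_{a,r})\}$, hence onto a ball; so $B_0(\psi) = 1$. From $\psi(\zeta_G) = \zeta_{0,|\lambda|}$, and $\psi^{-1}(\zeta_G) = \zeta_{0,\,1/|\lambda|}$ (as $\psi^{-1}(z) = \lambda^{-1}z$), formula (\ref{diam_G1}) gives $\GIR(\psi) = \diam_G(\zeta_{0,|\lambda|}) = |\lambda|$ and $\GPR(\psi) = \diam_G(\zeta_{0,\,1/|\lambda|}) = |\lambda|$; also $(\lambda X, Y)$ is a normalized representation of $\psi$ with resultant $\lambda$, so $|\Res(\psi)| = |\lambda|$. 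For the Lipschitz constants, the lower bound comes from $x = 1/\lambda$, $y = \infty$: here $\psi(1/\lambda) = 1$, $\psi(\infty) = \infty$, and a direct computation with the spherical metric gives $\|1,\infty\|/\|1/\lambda,\infty\| = 1/|\lambda|$, so $\Lip_{\PP^1(K)}(\psi) \ge 1/|\lambda|$; moreover $\Lip_{\PP^1(K)}(\psi) \le \Lip_\Berk(\psi)$ by Proposition \ref{FRLd_Properties}(2).

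It remains to bound $\Lip_\Berk(\psi)$ above by $1/|\lambda|$, and the key ingredient I would establish first is the pointwise estimate
\begin{equation*}
\diam_G(\psi(x)) \ \le \ |\lambda|^{-1}\,\diam_G(x) \qquad \text{for all } x \in \BHH^1_K,
\end{equation*}
which I would prove by a short case analysis: writing $x = \zeta_{a,r}$ and using $\psi(\zeta_{a,r}) = \zeta_{\lambda a,\,|\lambda| r}$, one evaluates the ratio $\diam_G(\psi(x))/\diam_G(x)$ via (\ref{diam_G1}) according to the positions of $\zeta_{a,r}$ and its image relative to $D(0,1)$, and finds it always lies in $[\,|\lambda|,\,1/|\lambda|\,]$ (type IV points then follow by continuity). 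Granting this: since $B_0(\psi) = 1$, Corollary \ref{SupCor} reduces the bound on $\Lip_\Berk(\psi)$ to a bound on $\Lip_\Berk(\psi\vert_I)$ over the radial segments $I = [\alpha,\zeta_G]$, $\alpha \in \PP^1(K)$, and on such a segment Corollary \ref{DerivCor} gives $\Lip_\Berk(\psi\vert_I) = \max_i \sup_{r\in(r_i,s_i)} |F_{\psi,i}'(r)|$, where $F_{\psi,i}(r) = C_i r^{k_i}$ with $k_i = \pm 1$ (because $d = 1$). For such a function $|F_{\psi,i}'(r)| = C_i|k_i|\,r^{k_i-1} = F_{\psi,i}(r)/r$; and if $x \in (a_i,a_{i+1})$ is the point with $\diam_G(x) = r$, then $F_{\psi,i}(r) = \diam_G(\psi(x))$, so $|F_{\psi,i}'(r)| = \diam_G(\psi(x))/\diam_G(x) \le 1/|\lambda|$. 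Hence $\Lip_\Berk(\psi\vert_I) \le 1/|\lambda|$ for every $I$, so $\Lip_\Berk(\psi) \le 1/|\lambda|$; with the lower bound this yields $1/|\lambda| \le \Lip_{\PP^1(K)}(\psi) \le \Lip_\Berk(\psi) \le 1/|\lambda|$, so both constants equal $1/|\lambda|$. The only step that is more than routine bookkeeping is the pointwise estimate above; the one other point to watch is the role of a normalized representation in the last displayed identity of the theorem.
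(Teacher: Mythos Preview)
Your argument is correct and follows essentially the same route as the paper: reduce via $\GL_2(\cO)$ row/column operations to a diagonal map (the paper uses $z\mapsto z/D$ with $|D|\le 1$, you use $z\mapsto\lambda z$ with $|\lambda|\le 1$), compute $\GIR$, $\GPR$, $B_0$, and $|\Res|$ directly, and obtain the lower bound on $\Lip_{\PP^1(K)}$ by evaluating at a pair of points.

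The only real difference is in how the upper bound on $\Lip_\Berk$ is organized. The paper computes the ratios $d(\Phi(\zeta_{a,r}),\Phi(\zeta_{a,s}))/d(\zeta_{a,r},\zeta_{a,s})$ by a direct six-case verification along radial segments. You instead package the same case analysis into the pointwise estimate $\diam_G(\psi(x))/\diam_G(x)\le 1/|\lambda|$ and then feed it through Corollaries~\ref{SupCor} and~\ref{DerivCor}, using the observation that when $d=1$ the exponent $k_i=\pm1$ forces $|F_{\psi,i}'(r)|=F_{\psi,i}(r)/r$. This is a slightly cleaner way to see why the Lipschitz constant on each radial segment is controlled by the single-point ratio, and it reuses the general machinery of \S\ref{PreliminaryLabelSection} rather than doing an ad hoc computation; but the underlying case split via (\ref{diam_G1}) is the same work either way. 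Two small remarks: the ``inversion'' you invoke to flip $\lambda_1/\lambda_2$ to $\lambda_2/\lambda_1$ must be applied on \emph{both} sides (equivalently, swap both rows and columns), and the Smith-normal-form step over $\cO$ goes through because the valuation ring is a B\'ezout domain, which is exactly what the paper's ``elementary row and column operations'' are implicitly using.
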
 

\begin{proof} Theorem \ref{LinearThm} is a restatement of Theorem \ref{MobiusCase} in the Introduction.
Write $[\varphi]$ for the matrix 
\begin{equation*}
\left[\begin{array}{cc} a & b \\ c & d \end{array} \right] \ . 
\end{equation*}
Since $\varphi$ is unchanged when $[\varphi]$ is scaled by an element of $K^{\times}$, 
we can assume that $[\varphi] \in M_2(\cO)$ and $\max(|a|,|b|,|c|,|d|) = 1$.  
Since $d(x,y)$ and $\|x,y\|$ are preservied by $\GL_2(\cO)$,
we can pre- and post-compose $\varphi$ with elements of $\GL_2(\cO)$ 
without changing $\Lip_\Berk(\varphi)$ and $\Lip_{\PP^1(K)}(\varphi)$;  
such compositions also preserve $\GIR(\varphi)$, $\GPR(\varphi)$, 
the value of $|\Res(\varphi)| = |ad-bc|$, 
and the fact that $\max(|a|,|b|,|c|,|d|) = 1$.
Choosing $\gamma_1, \gamma_2 \in \GL_2(\cO)$ to
carry out appropriate combinations of elementary row and column operations, 
and setting $\Phi  = \gamma_1 \circ \varphi \circ \gamma_2$, we can arrange that  
\begin{equation*}
[\Phi] \ = \ \left[\begin{array}{cc} 1 & 0 \\ 0 & D \end{array} \right]
\end{equation*} 
where $D \in \cO \backslash \{0\}$.  
Note that  $\Phi(\zeta_G) = \zeta_{0,1/|D|}$, so $\GIR(\varphi) = \GIR(\Phi) = |D| = |\Res(\varphi)|$.  
Similarly $\Phi(\zeta_{0,|D|}) = \zeta_G$, so $\GPR(\varphi) = \GPR(\Phi) = |D|$.  
Trivially $B_0(\varphi) = B_0(\Phi) = 1$.  

We will now show that $\Lip_\Berk(\Phi) = 1/|D|$. Recall that  
\begin{equation*}
\diam_G(\zeta_{a,r}) \ = \ \left\{ \begin{array}{ll} r & \text{ if $|a|, r \le 1$ \ ,} \\
													r/|a|^2 & \text{ if $|a| > 1$ and $r < |a|$ \ ,} \\
													1/r & \text{ if $|a| \le 1$ and $r \ge 1$, or if $1 < |a| \le r$ \ .} 
													\end{array} \right.
\end{equation*} 
For future use, note that the three formulas on the right can be combined the a single expression  
\begin{equation} \label{diamGformula} 
\diam_G(\zeta_{a,r}) \ = \ \frac{r}{\max(1,|a|,r)^2} \ .
\end{equation} 
Given a point $\zeta_{a,r} \in \AA^1_\Berk$, we have $\Phi(\zeta_{a,r}) = \zeta_{a/D,r/|D|}$.   
It follows that 
\begin{equation*}
\diam_G(\Phi(\zeta_{a,r})) \ = \ \left\{ \begin{array}{ll} r/|D| & \text{ if $|a|, r \le |D|$ \ ,} \\
									r |D| / |a|^2 & \text{ if $|a| > |D|$ and $r < |a|$ \ ,} \\
									|D|/r & \text{ if $|a| \le |D|$ and $r \ge |D|$, or if $|D| < |a| \le r$ \ .} \\
													 \end{array} \right.
\end{equation*} 
 
Since points of type II and III are dense in $\BPP^1_K$ for the strong topology, 
it suffices to bound $\Lip_\Berk(\varphi)$ on paths $[a,\infty]$ where $a \in K$.  The remainder of 
the argument is a case by case verification.

Fix $a \in K$, and consider a point $\zeta_{a,r}$. If $|a| \le |D|$ and $s < r \le |D|$ then 
\begin{equation*}
\frac{d(\Phi(\zeta_{a,r}),\Phi(\zeta_{a,s}))}{d(\zeta_{a,r},\zeta_{a,s})}
 \ = \ \frac{r/|D| - s/|D|}{r-s} \ = \ \frac{1}{|D|} \ . 
\end{equation*}
If $|a| \le |D|$ and $|D| \le s < r$, then $|a/D| \le 1$ while $ 1 \le s/|D| < r/|D|$, so 
\begin{equation*}
\frac{d(\Phi(\zeta_{a,r}),\Phi(\zeta_{a,s}))}{d(\zeta_{a,r},\zeta_{a,s})}
 \ = \ \frac{|D|/s - |D|/r}{r-s} \ = \ \frac{|D|}{rs} \ < \ \frac{1}{|D|}\ . 
\end{equation*}
If $|D| < |a| \le 1$ and $0 < s < r \le |a|$, then  
\begin{equation*}
\frac{d(\Phi(\zeta_{a,r}),\Phi(\zeta_{a,s}))}{d(\zeta_{a,r},\zeta_{a,s})}
\ = \ \frac{(r/|D|)/(|a/D|)^2 - (s/|D|)/(|a/D|)^2)}{r-s} \ = \ \frac{|D|}{|a|^2} \ < \ \frac{1}{|D|}\ . 
\end{equation*}
If $|D| < |a| \le 1$ and $|a| \le s < r$, then $1 < |a/D| \le s/|D| < r/|D|$ so again 
\begin{equation*}
\frac{d(\Phi(\zeta_{a,r}),\Phi(\zeta_{a,s}))}{d(\zeta_{a,r},\zeta_{a,s})}
 \ = \ \frac{|D|/s - |D|/r}{r-s} \ = \ \frac{|D|}{rs}  \ < \ \frac{1}{|D|}\ . 
\end{equation*}
If $|a| > 1$ and $0 < s < r \le |a|$, then  
\begin{equation*}
\frac{d(\Phi(\zeta_{a,r}),\Phi(\zeta_{a,s}))}{d(\zeta_{a,r},\zeta_{a,s})}
 \ = \ \frac{(r/|D|)/(|a|/|D|)^2 - (s/|D|)/(|a|/|D|)^2)}{r/|a|^2-s/|a|^2} \ = \ |D| \ \le \ \frac{1}{|D|}\ . 
\end{equation*}
Finally, if $|a| > 1$ and $|a| \le s < r$, then $\zeta_{a,s} = \zeta_{0,s}$ and $\zeta_{a,r} = \zeta_{0,r}$ so 
\begin{equation*}
\frac{d(\Phi(\zeta_{a,r}),\Phi(\zeta_{a,s}))}{d(\zeta_{a,r},\zeta_{a,s})}
 \ = \ \frac{|D|/s - |D|/r}{1/s-1/r} \ = \ |D| \ \le \ \frac{1}{|D|}\ . 
\end{equation*}
Thus $\Lip_\Berk(\varphi) = \Lip_\Berk(\Phi) = 1/|D|$.

Clearly $\Lip_{\PP^1(K)}(\Phi) \le \Lip_\Berk(\Phi) = 1/|D|$.  
To prove that $\Lip_{\PP^1(K)}(\Phi) = 1/|D|$, it suffices to show that $\Lip_{\PP^1(K)}(\Phi) \ge 1/|D|$. 
This is trivial, since if $x = 0$ and $y = D$, then  $\|x,y\|=\|0,D\|=|D|$ and $\|\Phi(x),\Phi(y)\|=\|0,1\|=1$.  
\end{proof} 

\section{Some Auxiliary Constants} \label{AuxSection}

In this section, we study the four constants associated to $\varphi$ in the Introduction: the Gauss Pre-Image radius, 
the Root-Pole number, the Ball-Mapping radius, and the Gauss Image radius.

\begin{definition} \label{FourDefs}
Let $\varphi(z) \in K(z)$ have degree $d \ge 1$.   

$(A)$ The Gauss Image radius of $\varphi$ is $\GIR(\varphi) = \diam_G(\varphi(\zeta_G))$.
\end{definition} 

$(B)$ The Root-Pole number of $\varphi$ is
\begin{equation*}
\RP(\varphi) \ = \ \min \{\|\alpha, \beta\| : \alpha, \beta \in \PP^1(K), \varphi(\alpha) = 0, \varphi(\beta) = \infty \} \ .
\end{equation*} 

$(C)$ The Ball-Mapping radius of $\varphi$ is  
\begin{equation*}
B_0(\varphi) = \sup \{0 < r \le 1 : \text{for all $a \in \PP^1(K)$, $\varphi(\cB(a,r)^-) \ne \BPP^1_K$}\} \ .
\end{equation*}

$(D)$ The Gauss Pre-Image radius of $\varphi$ is  
\begin{equation*}
\GPR(\varphi) \ = \ \min \{ \diam_G(x) : x \in \BPP^1_K, \varphi(x) = \zeta_G \} \ .  
\end{equation*}

Clearly the Ball-Mapping radius, the Gauss Image radius, and the Gauss Pre-Image radius are invariant under pre- and post- 
composition of $\varphi$ with elements of $\GL_2(\cO)$; the Ball-Mapping radius is also invariant under post-composition 
of $\varphi$ with elements of $\GL_2(K)$. 
The Root-Pole number is not invariant under either pre- or post- 
composition by $\GL_2(\cO)$, but it lies between the Gauss Pre-Image radius and the Ball-Mapping radius:  

\begin{proposition} \label{GaussInequalitiesProp} Let $\varphi(z) \in K(z)$ have degree $d \ge 1$.  Then 
\begin{equation*}
0 \ < \ \GPR(\varphi) \ \le \ \RP(\varphi) \ \le \ B_0(\varphi) \ \le \ 1 \ .
\end{equation*}
\end{proposition}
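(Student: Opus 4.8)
The plan is to establish the chain $0 < \GPR(\varphi) \le \RP(\varphi) \le B_0(\varphi) \le 1$ by proving the two nontrivial middle inequalities, the outer ones being essentially definitional. Positivity of $\GPR(\varphi)$ holds because $\varphi^{-1}(\{\zeta_G\})$ is a finite set of points in $\BHH^1_K$, each with strictly positive $\diam_G$; the upper bound $B_0(\varphi) \le 1$ is immediate from the definition of $B_0(\varphi)$ as a supremum over $0 < r \le 1$. So the real content is $\GPR(\varphi) \le \RP(\varphi)$ and $\RP(\varphi) \le B_0(\varphi)$.

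For $\GPR(\varphi) \le \RP(\varphi)$: let $\alpha, \beta \in \PP^1(K)$ be a zero and a pole of $\varphi$ achieving $\RP(\varphi) = \|\alpha,\beta\|$, and consider the join point $\xi = \alpha \vee_G \beta$, which satisfies $\diam_G(\xi) = \|\alpha,\beta\| = \RP(\varphi)$. I claim $\varphi(\xi) = \zeta_G$, which forces $\GPR(\varphi) \le \diam_G(\xi) = \RP(\varphi)$. To see the claim, note $\xi$ is of type II (since $\RP(\varphi) \in |K^\times|$, which follows from $\alpha,\beta$ being type I points) and lies on both $[\alpha,\zeta_G]$ and $[\beta,\zeta_G]$; the two tangent directions at $\xi$ toward $\alpha$ and toward $\beta$ are distinct. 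Since $\varphi(\alpha) = 0$ and $\varphi(\beta) = \infty$, the images $\varphi(\cB_\xi(\vv_\alpha)^-)$ and $\varphi(\cB_\xi(\vv_\beta)^-)$ are disjoint (they contain $0$ and $\infty$ respectively, or their closures meet only at $\varphi(\xi)$); a type II point whose image has at least two distinct directions going to different components must map to a point where $\varphi$ has "branching" — and the only way $\varphi(\xi) \ne \zeta_G$ is compatible with having two tangent directions mapping to the directions toward $0$ and toward $\infty$ at $\varphi(\xi)$ is to directly check that in fact $\varphi(\xi)=\zeta_G$. More carefully: one reduces via $\GL_2(\cO)$-invariance of $\GPR$, $\RP$ and $\diam_G$ to the case where $\xi = \zeta_{0,r}$ with $r = \RP(\varphi)$, $\alpha \in D(0,r)$, $\beta \notin D(0,r)$; then $\varphi$ has a zero inside the disc $D(0,r)$ and a pole outside, and analyzing the generic action of $\varphi$ on the annulus-type region around $\zeta_{0,r}$ shows $\varphi(\zeta_{0,r})$ must be a point $\zeta_{b,R}$ from which one direction sees $0$ and another sees $\infty$, i.e. $\diam_G(\varphi(\zeta_{0,r})) = 1$, which means $\varphi(\zeta_{0,r}) = \zeta_G$.

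For $\RP(\varphi) \le B_0(\varphi)$: I argue the contrapositive-flavored statement that for every $r < \RP(\varphi)$ and every $a \in \PP^1(K)$, the image $\varphi(\cB(a,r)^-)$ is not all of $\BPP^1_K$; this gives $B_0(\varphi) \ge r$ for all such $r$, hence $B_0(\varphi) \ge \RP(\varphi)$. Fix $a$ and $r < \RP(\varphi)$. Since the spherical distance between any zero and any pole of $\varphi$ is at least $\RP(\varphi) > r$, the ball $B(a,r)^-$ cannot contain both a zero and a pole of $\varphi$; say it contains no pole (the zero case is symmetric via $1/\varphi$). Then $\varphi$ is "holomorphic" on $\cB(a,r)^-$ in the sense that it omits $\infty$ there, so $\varphi(\cB(a,r)^-) \subseteq \BAA^1_K \subsetneq \BPP^1_K$ — here I would invoke the standard fact (from \cite{B-R}) that $\varphi$ maps a Berkovich ball either onto a Berkovich ball or onto all of $\BPP^1_K$, together with the observation that a ball avoiding all poles cannot surject onto $\BPP^1_K$ since $\infty$ would be omitted; more precisely $\varphi(\cB(a,r)^-)$ is then a proper ball.

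The main obstacle I anticipate is the first inequality, specifically the claim $\varphi(\alpha \vee_G \beta) = \zeta_G$: this needs a careful argument about the action of $\varphi$ on tangent directions at a type II point, using that a zero lies in one direction and a pole in another, and that therefore the image point must "see" both $0$ and $\infty$ and hence be $\zeta_G$. This is where I would lean hardest on the reduction-to-$\zeta_{0,r}$ normalization and the explicit description of the generic action of $\varphi$ on punctured discs recalled in \S1. The second inequality is comparatively routine given the "ball maps to ball or everything" theorem cited from \cite{B-R}.
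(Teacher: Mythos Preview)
Your argument for $\RP(\varphi) \le B_0(\varphi)$ is correct and is essentially the contrapositive of the paper's argument. The problem is in your proof of $\GPR(\varphi) \le \RP(\varphi)$: the claim $\varphi(\alpha \vee_G \beta) = \zeta_G$ is false in general. Take $\varphi(z) = z^2/(z-p)$ over $\CC_p$. The closest zero--pole pair is $(0,p)$, so $\RP(\varphi) = \|0,p\| = 1/p$ and $\xi := 0 \vee_G p = \zeta_{0,1/p}$. Writing $z = pu$ with $|u| = |u-1| = 1$ one finds $\varphi(z) = p \cdot u^2/(u-1)$, so $|\varphi(z)| = 1/p$ and the reduced map $\bar u \mapsto \bar u^2/(\bar u - 1)$ is nonconstant on $\PP^1(\tk)$; hence $\varphi(\xi) = \zeta_{0,1/p} \ne \zeta_G$. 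The error in your reasoning is the inference ``from $\varphi(\xi)$ one direction sees $0$ and another sees $\infty$, therefore $\diam_G(\varphi(\xi)) = 1$'': every point on the segment $[0,\infty] \subset \BPP^1_K$ has distinct tangent directions toward $0$ and toward $\infty$, not just $\zeta_G$. (Your normalization ``$\alpha \in D(0,r)$, $\beta \notin D(0,r)$'' is also wrong: after moving $\xi$ to $\zeta_{0,r}$ via $\GL_2(\cO)$, both $\alpha$ and $\beta$ lie in $D(0,r)$, in distinct open subdiscs of radius $r$.)

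The paper's argument sidesteps this by not trying to pin down which point maps to $\zeta_G$. The image $\varphi([\alpha,\beta])$ is a connected subset of $\BPP^1_K$ containing $0$ and $\infty$, so by unique path-connectedness it contains the whole segment $[0,\infty]$, in particular $\zeta_G$. Thus \emph{some} $x \in [\alpha,\beta]$ satisfies $\varphi(x) = \zeta_G$; since every point of $[\alpha,\beta]$ has $\diam_G \le \diam_G(\alpha \vee_G \beta) = \|\alpha,\beta\| = \RP(\varphi)$, one gets $\GPR(\varphi) \le \diam_G(x) \le \RP(\varphi)$.
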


\begin{proof} 
Since there are most $d$ pre-images of $\zeta_G$ under $\varphi$, which all lie in $\HH^1_K$, clearly $\GPR(\varphi) > 0$.
Also, by the definition of $B_0(\varphi)$, we trivially have $B_0(\varphi) \le 1$.

It is also easy to see that $\GPR(\varphi) \le \RP(\varphi)$.  Indeed, if $r = \RP(\varphi)$, then there are a root $\alpha$
and a pole $\beta$ of $\varphi$ with $\|\alpha,\beta\| = r$.  The image of the path $[\alpha,\beta]$ under $\varphi$ 
is connected and contains $0$ and $\infty$, so it contains the path $[0,\infty]$.  Hence it contains $\zeta_G$, 
and there is a point $x$ of $\varphi^{-1}(\zeta_G)$ in $[\alpha,\beta]$.  It follows that
$r = \diam_G(\alpha \wedge_G \beta) \ge \diam_G(x) \ge \GPR(\varphi)$.   

Finally, we show that $\RP(\varphi) \le B_0(\varphi)$.  If $B_0(\varphi) = 1$, then trivially $\RP(\varphi) \le B_0(\varphi)$,
since $\|\alpha,\beta\| \le 1$ for any pair of elements $\alpha, \beta \in \PP^1(K)$.  Suppose $B_0(\varphi) < 1$, 
and take any $r$ with $B_0(\varphi) < r \le 1$. Since $r > B_0(\varphi)$, there is a ball $\cB(a,r)^-$ 
with $\varphi(\cB(a,r)^-) = \BPP^1_K$.  Hence there are $\alpha, \beta \in \PP^1(K) \cap \cB(a,r)^-$ such that 
$\varphi(\alpha) = 0$, $\varphi(\beta) = \infty$.  It follows that $r > \|\alpha,\beta\| \ge \RP(\varphi)$.  Since 
$B_0(\varphi)$ is the infimum of all such $r$, we must have $B_0(\varphi) \ge \RP(\varphi)$. 
\end{proof} 

The inequalities $\GPR(\varphi) \le \RP(\varphi) \le B_0(\varphi)$ in Proposition 
\ref{GaussInequalitiesProp} can both be strict.  For example, consider the polynomial $\varphi(z) = z^2 - 1/p^2 \in \CC_p[z]$, 
where $p$ is an odd prime.  One sees easily that $\varphi^{-1}(\zeta_G) = \{ \zeta_{1/p,1/p}, \zeta_{-1/p,1/p} \}$ so 
$\GPR(\varphi) = p^{-3}$.  The zeros of $\varphi$ are $\{\pm 1/p\}$ and the only pole is $\{\infty\}$,
so $\RP(\varphi) = p^{-1}$.  Finally, the only solution 
to $\varphi(z) = -1/p^2$ is $z = 0$. It follows that if 
$\varphi(\cB(a,r)^-) = \BPP^1_K$ for some ball, then both $0, \infty \in \cB(a,r)^-$. 
This is impossible with $r < 1$, so $B_0(\varphi) = 1$.  

\medskip
Our next proposition says that $B_0(\varphi) \in |K^\times|$, and there is a ball which realizes it.

\begin{proposition}  \label{B_0RationalityProp}
Let $\varphi(z) \in K(z)$ have degree $d \ge 1$, and put $B_0 = B_0(\varphi)$.  Then $B_0 \in |K^{\times}|$. Moreover,
if $B_0 < 1$ $($so necessarily $d \ge 2)$, there is an $\alpha \in \PP^1(K)$ for which $\varphi(\cB(a,B_0)^-)$ is a ball, 
but $\varphi(\cB(\alpha,B_0)) = \BPP^1_K$.   
\end{proposition}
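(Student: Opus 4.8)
The plan is to turn surjectivity of $\varphi$ on a ball into a purely topological condition, then rewrite $B_0(\varphi)$ as an infimum over a class of type II points that is governed by the (finite) ramification data of $\varphi$, and finally to show this infimum is attained. The basic tool I would establish first is the following criterion: for any $Q\in\BPP^1_K$ of type II or III and any $\vv\in T_Q$,
\[
\varphi(\cB_Q(\vv)^-)=\BPP^1_K \quad\Longleftrightarrow\quad \cB_Q(\vv)^-\cap\varphi^{-1}(\varphi(Q))\neq\emptyset .
\]
Here $\varphi$ is a closed map for the weak topology ($\BPP^1_K$ is weakly compact Hausdorff and $\varphi$ is a continuous surjection), and the weak closure of the component $\cB_Q(\vv)^-$ of $\BPP^1_K\setminus\{Q\}$ adjoins only $Q$; hence $\varphi(\cB_Q(\vv)^-)$ is an open connected set whose closure adjoins only $\varphi(Q)$, and such a set equals $\BPP^1_K$ exactly when it already contains $\varphi(Q)$. (The alternative, that it is $\BPP^1_K$ minus a single point, is excluded since $\varphi$ preserves point type, so $\varphi(Q)$ is again of type II or III and is not a leaf.) Applied to $\cB(a,r)^-=\cB_{Q_{a,r}}(\vv_a)^-$ this reads: $\varphi(\cB(a,r)^-)=\BPP^1_K$ iff $\cB(a,r)^-$ meets the fibre over $\varphi(Q_{a,r})$.

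Next I would rephrase $B_0$. Fix $r\in|K^\times|$, so $Q:=Q_{a,r}$ is of type II, and note that $\cB(a,r)^-$ depends on $a$ only through the choice of direction $\vv_a\in T_Q\setminus\{\vv_{\zeta_G}\}$, while $\bigcup_{\vv\neq\vv_{\zeta_G}}\cB_Q(\vv)^-=\BPP^1_K\setminus(\{Q\}\cup\cB_Q(\vv_{\zeta_G})^-)$. By the criterion, $\varphi(\cB(a,r)^-)\neq\BPP^1_K$ for every $a$ precisely when every type II point $Q$ with $\diam_G(Q)=r$ satisfies $\varphi^{-1}(\varphi(Q))\subseteq\{Q\}\cup\cB_Q(\vv_{\zeta_G})^-$; call such a $Q$ \emph{tame}, otherwise \emph{critical}. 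Since $r<r'$ gives $\cB(a,r)^-\subseteq\cB(a,r')^-$, the set of radii that are good for all $a$ is a down-set, so
\[
B_0(\varphi)\ =\ \min\Big(1,\ \inf\{\diam_G(Q):Q\ \text{type II and critical}\}\Big).
\]
If $Q$ is critical there is $y\neq Q$ with $\varphi(y)=\varphi(Q)$ and $y$ strictly below $Q$; as $\varphi(Q)$ is type II so is $y$, and $\varphi$ carries the segment $[y,Q]$ onto a loop at $\varphi(Q)$, so $(y,Q)$ meets the ramification locus of $\varphi$, a finite subtree $\Gamma\subset\BHH^1_K$.

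The crux is to show this infimum is a minimum. Given a critical $Q$ with witness $y$, I would first replace $Q$ by the lowest point of $\varphi^{-1}(\varphi(y))$ lying strictly above $y$ (a finite set), so that no further preimage of $\varphi(Q)$ lies between $y$ and $Q$; then the fold point of $\varphi$ on $[y,Q]$ lies in $\Gamma$, and along each edge of $\Gamma$ the diameter of the image is, by Lemma \ref{DecompositionLemma}(5), a monomial in the diameter of the source. Running over the finitely many edges of $\Gamma$ and using that $\Gamma$ is weakly compact, one produces a critical type II point $Q_0$ with $\diam_G(Q_0)=B_0$ exactly; in particular $B_0\in|K^\times|$, and equivalently $\varphi(\cB(a,B_0)^-)$ is a ball for every $a$. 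The ``moreover'' is then easy: assume $B_0<1$, take such a $Q_0$ with witness $y_0$ in a direction $\vv_{y_0}\neq\vv_{\zeta_G}$, so $\varphi(\cB_{Q_0}(\vv_{y_0})^-)=\BPP^1_K$ by the criterion. Since $T_{Q_0}\cong\PP^1(\tk)$ is infinite while $\varphi^{-1}(\varphi(Q_0))$ is finite, choose a direction $\vv_\alpha\in T_{Q_0}$ with $\vv_\alpha\neq\vv_{\zeta_G}$ and $\cB_{Q_0}(\vv_\alpha)^-\cap\varphi^{-1}(\varphi(Q_0))=\emptyset$, and pick $\alpha\in\PP^1(K)$ in that direction. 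Then $Q_{\alpha,B_0}=Q_0$, the criterion makes $\varphi(\cB(\alpha,B_0)^-)=\cB_{\varphi(Q_0)}(\varphi_*(\vv_\alpha))^-$ a proper ball, while $\cB(\alpha,B_0)\supseteq\cB_{Q_0}(\vv_{y_0})^-$ forces $\varphi(\cB(\alpha,B_0))=\BPP^1_K$.

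The genuinely delicate step is the attainment in the previous paragraph — equivalently, that the set of good radii is closed on the left at $B_0$. Pure compactness is not enough, because $\PP^1(K)$ is not compact and $\diam_G$ is only upper semicontinuous on $\BPP^1_K$; one must instead confine the critical configurations to the finite ramification subtree $\Gamma$ and exploit the explicit monomial form of $\diam_G\circ\varphi$ along radial segments, via Lemmas \ref{DecompositionLemma} and \ref{fprimeMonoLemma}.
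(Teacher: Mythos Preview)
Your fibre criterion $\varphi(\cB_Q(\vv)^-)=\BPP^1_K \iff \cB_Q(\vv)^-\cap\varphi^{-1}(\varphi(Q))\ne\emptyset$ is correct, and so is the reformulation $B_0=\min\bigl(1,\inf\{\diam_G(Q):Q\ \text{critical}\}\bigr)$. But your central step---that this infimum is \emph{attained} at some critical type~II point $Q_0$---is false, and the argument collapses there.

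In fact $B_0$ is always a \emph{good} radius: for every $a\in\PP^1(K)$ one has $\varphi(\cB(a,B_0)^-)\ne\BPP^1_K$. This follows already from your setup. Since $\cB(a,B_0)^-=\bigcup_{r<B_0}\cB(a,r)^-$ and each $\varphi(\cB(a,r)^-)$ is an open ball, the images form a nested increasing family of open balls; their complements are a nested family of nonempty weakly closed sets in the weakly compact space $\BPP^1_K$, hence have nonempty intersection. Thus $\varphi(\cB(a,B_0)^-)\ne\BPP^1_K$. Consequently there is \emph{no} critical $Q_0$ with $\diam_G(Q_0)=B_0$, and you have in fact asserted two mutually exclusive claims in the same sentence: that such a $Q_0$ exists, and that $\varphi(\cB(a,B_0)^-)$ is a ball for every $a$. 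Your ``moreover'' argument, which builds on $Q_0$, therefore has no starting point.

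The real obstruction your outline does not address is this: for each $r$ slightly above $B_0$ there \emph{is} a critical $Q_r$, but as $r\searrow B_0$ these $Q_r$ need not lie on a common radial ray---the bad balls at different radii may have entirely different centers---so no naive limit exists. Your appeal to the ramification tree $\Gamma$ does not close this gap: the fold point on $[y,Q]$ lies in $\Gamma$, but $Q$ itself need not, and you give no mechanism tying $\diam_G(Q)$ to data on $\Gamma$. The paper resolves this with a different idea: each bad ball must contain either a classical fixed point of $\varphi$ or a \emph{focused repelling} type~II fixed point (an external input from \cite{RR-GMRL}, \cite{RR-NEND}); since there are only finitely many such points, pigeonhole produces a single fixed point lying in a nested sequence of bad balls, and then a direct Annulus Mapping Theorem argument at that common center gives both $B_0\in|K^\times|$ and the closed-ball surjectivity $\varphi(\cB(\alpha,B_0))=\BPP^1_K$.
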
 

\begin{proof} The proof uses the theory of the ``crucial set'' from (\cite{RR-GMRL}, \cite{RR-NEND}).

Write $B_0 = B_0(\varphi)$.  If $d=1$, then $B_0 = 1$, and the assertions are trivial. Assume $d \ge 2$.
If $B_0 = 1$, the assertions are again trivial, so we can assume $0 < B_0 < 1$.    
Choose a sequence of numbers $1 > R_1 > R_2 > \cdots > B_0$ in $|K^{\times}|$ with $\lim_{i \rightarrow \infty} R_i = B_0$, 
and a sequence of balls $\cB(a_i,R_i)^-$ such that $\varphi(\cB(a_i,R_i)^-) = \BPP^1_K$ for each $i$.  Each of the balls 
$\cB(a_i,R_i)^-$ can be written in the form $\cB_{P_i}(\vv_i)^-$ where $P_i$ is a type II point and $\vv_i \in T_{P_i}$ 
is a suitable tangent vector.
 
By the proof of Theorem 4.6 of \cite{RR-NEND} (alternately see Theorem 4.6 of \cite{RR-GMRL}), 
each $\cB(a_i,R_i)^-$ contains either a classical fixed point of $\varphi$ (that is, a fixed point in $\PP^1(K)$), or 
a repelling fixed point of $\varphi$ in $\BHH^1_K$ of a special type, a {\em focused repelling fixed point}. 
A focused repelling fixed point is a type II point $Q$ with $\varphi(Q) = Q$, such that $\deg_\varphi(Q) \ge 2$   
and there is a unique $\vv_\# \in T_Q$ for which $\varphi_*(\vv_\#) = \vv_\#$.  
(We are using the case of (\cite{RR-GMRL}, \cite{RR-NEND}, Theorem 4.6) concerning a ball with a type II boundary point:   
each such ball is dealt with by one of Lemmas 2.1, 2.2, and 4.5 of (\cite{RR-GMRL}, \cite{RR-NEND}).  
Lemmas 2.1 and 2.2 produce classical fixed points in $\cB(a_i,R_i)^-$, 
while Lemma 4.5 produces either a classical fixed point or a focused repelling fixed point.)
By (\cite{RR-GMRL}, \cite{RR-NEND}, Proposition 3.1) $\cB_Q(\vv_\#)^-$ contains all the classical fixed points of $\varphi$, 
and $\varphi(\BPP^1_K \backslash \cB_Q(\vv_\#)^-) = \BPP^1_K$.  There are at most $d+1$ classical fixed points of $\varphi$,
and by (\cite{RR-GMRL}, \cite{RR-NEND}, Corollary 6.3) 
there are at most $d-1$ repelling fixed points of $\varphi$ in $\BHH^1_K$.  
Thus we can apply the Pigeon-hole Principle to the balls and fixed points. 

First suppose there is a classical fixed point $\alpha$ which is contained in infinitely many balls $\cB(a_i,R_i)^-$.
By replacing the sequence of balls with a subsequence, we can assume $\alpha \in \cB(a_i,R_i)^-$ for each $i$.    
After conjugating $\varphi$ by a suitable element of $\GL_2(\cO)$, we can assume that $\alpha = 0$, and 
that the sequence of balls is $\{\cB(0,R_i)^-\}_{i \ge 1}$.  Suppose $B_0 \notin |K^{\times}|$.   
Then the point $P = \zeta_{0,B_0}$ is of type III.  The tangent space $T_P$ consists of two directions
$\vv_0,\vv_\infty$, and $\cB(0,B_0)^- = \cB_P(\vv_0)^-$.  Put $Q = \varphi(P)$, $\vw_1 = \varphi_*(\vv_0)$ 
and $\vw_2 = \varphi_*(\vv_\infty)$.  Necessarily $Q$ is of type III, and $\vw_1, \vw_2$ are the two tangent directions
in $T_Q$ (see \cite{B-R}, Corollary 9.20).  By the definition of the ball mapping radius, 
$\varphi(\cB_P(\vv_0)^-)$ is a ball, hence necessarily $\varphi(\cB_P(\vv_0))^-) = \cB_Q(\vw_1)^-$.  
By Rivera-Letelier's Annulus Mapping Theorem (see \cite{B-R}, Lemma 9.45), 
there is a point $P_1 \in \cB_P(\vv_\infty)^-$ for which $\varphi(\Ann(P,P_1))$ is 
an annulus $\Ann(Q,Q_1) \subset \cB_Q(\vw_2)^-$.  Without loss we can suppose  $P_1 = \zeta_{0,R}$ for some
$R > B_0$.  Since $\cB(0,R)^- = \cB(0,B_0)^- \cup \{P\} \cup \Ann(P,P_1)$, it follows that 
\begin{equation*}
\varphi(\cB(0,R)^-) \ = \ \cB_Q(\vw_1)^- \cup \{Q\} \cup \Ann(Q,Q_1) \ \ne \ \BPP^1_K \ .       
\end{equation*} 
This contradicts that $\varphi(\cB(0,R_i)^-) = \BPP^1_K$ when $B_0 < R_i < R$, hence $B_0 \in |K^{\times}|$.

By definition $\varphi(\cB(0,B_0)^-)$ is a ball;  we claim that $\varphi(\cB(0,B_0)) = \BPP^1_K$.  
Suppose this were not the case;
write $P = \zeta_{0,B_0}$ and put $Q = \varphi(P)$.  Let $\vv_\infty \in T_P$ be the direction containing $\infty$,
and put $\vw_\infty = \varphi_*(\vv_\infty) \in T_Q$.  The map 
$\varphi_* : T_P \rightarrow T_Q$ is surjective, so for each $\vw \in T_Q$ with $\vw \ne \vw_\infty$ there is 
some $\vv \in T_P$ with $\varphi_*(\vv) = \vw$.  
Since $\varphi(\cB_P(\vv)^-)$ contains $\cB_Q(\vw)^-$, we see that 
\begin{equation*}
\varphi(\cB(0,B_0)) 
\ \supseteq \ \{Q\} \cup \bigcup_{\vw \ne \vw_\infty} \cB_Q(\vw)^- \ \supseteq \ \BPP^1_K \backslash \cB_Q(\vw_\infty)^- \ . 
\end{equation*} 
Moreover, for each $\vv \in T_P$, the image $\varphi(\cB_P(\vv)^-)$ is either a ball or all of $\BPP^1_K$.
(If there were some $\vv \in T_P$ with $\vv \ne \vv_\infty$ 
for which $\varphi_*(\vv) = \vw_\infty$, then $\varphi(\cB(0,B_0))$ would contain $\cB_Q(\vw_\infty)^-$, 
hence would be $\BPP^1_K$.) 
Thus $\vv_\infty$ is the only direction in $T_P$ with $\varphi_*(\vv) = \vv_\infty$. 
It follows for each $\vv \ne \vv_\infty$, the image $\varphi(\cB_P(\vv)^-)$ is a ball $\cB_Q(\vw)^-$ 
with $\vw \ne \vv_\infty$, and  
that $\varphi(\cB(0,B_0)) = \BPP^1_K \backslash \cB_Q(\vw_\infty)^-$.  However, now Rivera-Letelier's Annulus 
mapping theorem shows there is a point $P_1 = \zeta_{0,S_1} \in \cB_P(\vv_\infty)^-$ for which $\varphi_*(\Ann(P,P_1))$
is the annulus $\Ann(Q,\varphi(P_1)) \subset \cB_Q(\vw_\infty)^-$.  This would mean that for each $R$ with $B_0 < R < S_1$
$\varphi(\cB(0,R)^-) \ne \BPP^1_K$, which contradicts the fact that $\varphi(\cB(0,R_i)^-) = \BPP^1_K$ for all $i$.  
Hence it must be that $\varphi(\cB(0,B_0)) = \BPP^1_K$.

Next consider the case where no classical fixed point is contained in infinitely many $\cB(a_i,R_i)^-$.  
In this situation there must be a focused
repelling fixed point $\xi$ which belongs to infinitely many $\cB(a_i,R_i)^-$.  After passing to a subsequence of the
balls, if necessary, we can assume that $\xi \in \cB(a_i,R_i)^-$ for each $i$, and that no classical fixed point is contained
in any $\cB(a_i,R_i)^-$.  After conjugating $\varphi$ by a suitable element of $\GL_2(\cO)$ if necessary,  
we can assume that $\xi = \zeta_{0,S_1}$ for some $S_1 \le B_0$, 
and that the sequence of balls is $\{\cB(0,R_i)^-\}_{i \ge 1}$. 
Since $\xi$ is of type II, necessarily $S_1 \in |K^{\times}|$.  
Since no $\cB(0,R_i)^-$ contains classical fixed points of $\varphi$,
the distinguished direction $\vv_\# \in T_\xi$ must be $\vv_\# = \vv_\infty$.  
It follows that $\BPP^1_K \backslash \cB_\xi(\vv_\#)^- = \cB(0,S_1)$, and $\varphi(\cB(0,S_1)) = \BPP^1_K$. If $B_0 > S_1$,
then $\varphi(\cB(0,R)^-) = \BPP^1_K$ for each $R$ with $B_0 > R > S_1$.  This contradicts the definition of the ball
mapping radius, so $B_0 = S_1 \in |K^{\times}|$.  The equality $B_0 = S_1$ also shows that $\varphi(\cB(0,B_0)^-)$ is a ball, 
but $\varphi(\cB(0,B_0)) = \BPP^1_K$.
\end{proof} 

There is a simple formula for $\GIR(\varphi)$ in terms of the coefficients of a normalized representation of $\varphi$:

\begin{proposition}\label{GIRProp}  Let $\varphi(z) \in K(z)$ have degree $d \ge 1$, 
and let $(F,G)$ be a normalized representation of $\varphi$.  
Write $F(X,Y) = a_d X^d + \ldots + a_1 X Y^{d-1} + a_0 Y^d$, $G(X,Y) = b_d X^d + \ldots + b_1 X Y^{d-1} + b_0 Y^d$.
Then 
\begin{equation} \label{GIRformula} 
\GIR(\varphi) \ = \ \max_{i \ne j} \Big( 
              \Big| \det \Big[ \begin{array}{cc} a_i & a_j \\ b_i & b_j \end{array} \Big] \Big| \Big) \ .
\end{equation}
\end{proposition}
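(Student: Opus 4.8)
The plan is to evaluate $\GIR(\varphi)=\diam_G(\varphi(\zeta_G))$ directly. First I would reduce to a convenient normalization. The right side of (\ref{GIRformula}) is independent of the choice of normalized representation (two normalized representations differ by a unit scalar, which multiplies each $a_ib_j-a_jb_i$ by a unit square), and both sides of (\ref{GIRformula}) are unchanged when $\varphi$ is replaced by $1/\varphi$: the Gauss Image radius is invariant under post-composition with $\GL_2(\cO)$, in particular with $w\mapsto 1/w$, while swapping $(F,G)$ for $(G,F)$ only changes each $a_ib_j-a_jb_i$ by a sign. Writing $M_F=\max_i|a_i|$ and $M_G=\max_i|b_i|$, I may therefore assume $M_F\le M_G$, which forces $M_G=1$ and $M_F\le 1$ because $(F,G)$ is normalized. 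I also record that $F,G$ are coprime, since $\deg(\varphi)=d$ gives $\Res(F,G)\neq 0$.

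Next I would locate $\varphi(\zeta_G)$. As $\varphi$ is nonconstant it preserves types, so $\varphi(\zeta_G)=\zeta_{b,R}$ for some $b\in K$ and $R\in|K^\times|$. For the target coordinate $w$ and any $c\in K$, functoriality of the Berkovich action (the seminorm at $\varphi(\zeta_G)$ of $w-c$ equals the seminorm at $\zeta_G$ of $\varphi-c$) together with multiplicativity of the Gauss valuation on $K(z)$ gives
\begin{equation*}
\max\big(|b-c|,\,R\big)\ =\ \big\|\varphi-c\big\|_{\zeta_G}\ =\ \frac{\big\|F(X,1)-cG(X,1)\big\|_\Gauss}{\big\|G(X,1)\big\|_\Gauss}\ =\ \max_{0\le i\le d}|a_i-cb_i|\ .
\end{equation*}
Taking $c=0$ yields $\max(|b|,R)=M_F\le 1$, so $|b|\le 1$ and $R\le 1$, whence $\diam_G(\zeta_{b,R})=R$ by (\ref{diam_G1}). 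Therefore $\GIR(\varphi)=R=\diam_\infty(\zeta_{b,R})$, and since $\diam_\infty(\zeta_{b,R})=\inf_{c\in K}\max(|b-c|,R)$, the infimum being attained at $c=b$, the displayed chain gives
\begin{equation*}
\GIR(\varphi)\ =\ \min_{c\in K}\ \max_{0\le i\le d}|a_i-cb_i|\ .
\end{equation*}

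It then remains to prove the ultrametric identity that, when $\max_i|b_i|=1$, one has $\min_{c\in K}\max_i|a_i-cb_i|=\max_{i\neq j}|a_ib_j-a_jb_i|$. For $\le$, choose $k$ with $|b_k|=1$ and take $c=a_k/b_k$; then $|a_i-cb_i|=|a_ib_k-a_kb_i|$ for each $i$, which is bounded by the right-hand side. For $\ge$, the identity $a_ib_j-a_jb_i=(a_i-cb_i)b_j-(a_j-cb_j)b_i$ together with the ultrametric inequality and $\max_i|b_i|=1$ shows $|a_ib_j-a_jb_i|\le\max_\ell|a_\ell-cb_\ell|$ for every $c$. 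Combining this with the previous paragraph gives $\GIR(\varphi)=\max_{i\neq j}|a_ib_j-a_jb_i|$ in the reduced case; since both sides of (\ref{GIRformula}) are insensitive to the reduction to $M_F\le M_G$ and to the choice of normalized representation, the proposition follows.

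I expect the subtle point to be the middle step: verifying that, under the normalization $M_G=1$, the image $\varphi(\zeta_G)$ actually lies in $\cD(0,1)$, so that $\diam_G$ and $\diam_\infty$ agree on it and the Gauss-valuation computation of $\|\varphi-c\|_{\zeta_G}$ applies cleanly. Once that is in place, the rest is routine manipulation with the non-archimedean absolute value.
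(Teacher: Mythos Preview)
Your proof is correct and takes a genuinely different route from the paper's. The paper first post-composes $\varphi$ by an element of $\GL_2(\cO)$ to move $\varphi(\zeta_G)$ to $\zeta_{0,R}$, then scales by $1/C$ with $|C|=R$ to obtain $\Phi$ with $\Phi(\zeta_G)=\zeta_G$, and finally appeals to the cited fact (\cite{B-R}, Lemma~2.17) that $\Phi(\zeta_G)=\zeta_G$ if and only if $\Phi$ has nonconstant reduction; the contradiction comes from observing that if all $2\times 2$ minors reduce to zero then the coefficient vectors of the reductions are proportional. You instead identify $\GIR(\varphi)$ directly with the minimax $\min_{c}\max_i|a_i-cb_i|$ via the functorial description of $\varphi(\zeta_G)$ as a seminorm and the multiplicativity of the Gauss valuation on $K(z)$, and then prove the clean ultrametric identity $\min_{c}\max_i|a_i-cb_i|=\max_{i\ne j}|a_ib_j-a_jb_i|$ by an elementary two-line argument. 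Your approach is more self-contained (no external lemma on nonconstant reduction is needed) and makes the formula transparent as a distance-from-a-line computation in the Gauss norm; the paper's approach, on the other hand, ties the formula conceptually to good reduction, which is illuminating in its own right.
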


\begin{proof} Suppose $\varphi(\zeta_G) = Q$.  After replacing $\varphi$ with $\gamma \circ \varphi$ for a suitable 
$\gamma \in \GL_2(\cO)$ we can assume that $Q = \zeta_{0,R}$, where $R = \GIR(\varphi)$.  Since $\gamma$ preserves 
$\diam_G(\cdot)$ and 
\begin{equation*} 
 \Big| \det \Big( \gamma \circ \Big[ \begin{array}{cc} a_i & a_j \\ b_i & b_j \end{array} \Big] \Big) \Big| \ = \ 
 \Big| \det \Big[ \begin{array}{cc} a_i & a_j \\ b_i & b_j \end{array} \Big] \Big|  \ ,
\end{equation*}
this does not affect (\ref{GIRformula}).  Since $(F,G)$ is normalized, for generic $z \in \cO_K$ we must have $|F(z,1)| = R$ and $|G(z,1)| = 1$.  This means that all coefficients of $F$ must satisfy $|a_i| \le R$ and at least one coefficient of $G$
must satisfy $|b_j| = 1$.

Next take $C \in K$ with $|C|=R$, and put $\Phi(z) = (1/C) \varphi(z)$.  Then $\Phi(\zeta_G) = \zeta_G$, 
so $\GIR(\Phi) = 1$,  and $(F_0(X,Y),G_0(X,Y)) := ((1/C)F(X,Y),G(X,Y))$ is a normalized representation of $\Phi$. 
Writing $F_0(X,Y) = A_d X^d + \ldots + A_1 X Y^{d-1} + A_0 Y^d$, $G_0(X,Y) = B_d X^d + \ldots + B_1 X Y^{d-1} + B_0 Y^d$,
it suffices to show that 
\begin{equation*} 
\max_{i \ne j} \Big( 
              \Big| \det \Big[ \begin{array}{cc} A_i & A_j \\ B_i & B_j \end{array} \Big] \Big| \Big) \ = \ 1 \ .
\end{equation*}
If this were not the case, then $\tA_i \tB_j - \tA_j \tB_i = \widetilde{0} \pmod{\fM}$ for all $i \ne j$, 
so one of the vectors $\tA = (\tA_d, \ldots, \tA_0)$, $\tB = (\tB_d, \ldots, \tB_0)$  
gotten by reducing the coefficients of $F_0, G_0 \pmod{\fM}$ would be a multiple of the other.  Hence 
$\Phi$ would have constant reduction at $\zeta_G$.  However, this contradicts (\cite{B-R}, Lemma 2.17), 
which says that $\Phi(\zeta_G) = \zeta_G$ if and only if $\Phi$ has nonconstant reduction.
\end{proof} 

\medskip
We next seek lower bounds for $B_0(\varphi)$, $\GIR(\varphi)$, and $\GPR(\varphi)$ in terms of  $|\Res(\varphi)|$.
For this, we will need the following proposition, which is a projective version of the classical formula 
for the resultant of two polynomials.

Let the zeros $\alpha_1, \ldots, \alpha_d$ of $\varphi$ in $\PP^1(K)$ (listed with multiplicity) 
have homogeneous coordinates 
\begin{equation*} 
(1:\sigma_1), \ldots, (1:\sigma_m), (\delta_{m+1}:1), \ldots, (\delta_d:1) \ , 
\end{equation*}
where $|\sigma_1|, \ldots, |\sigma_m| \le 1$ and $|\delta_{m+1}|, \ldots, |\delta_d| < 1$.    
Likewise, let the poles $\beta_1, \ldots, \beta_d$ of $\varphi$ (listed with multiplicity) 
have homogeneous coordinates   
\begin{equation*} 
(1:\tau_1), \ldots, (1:\tau_n), (\eta_{n+1}:1), \ldots, (\eta_d:1) \ , 
\end{equation*}
where $|\tau_1|, \ldots, |\tau_n| \le 1$ and $|\eta_{n+1}|, \ldots, |\eta_d| < 1$. 
  
Let $(F,G)$ be a normalized representation of $\varphi$. Then we can write 
\begin{eqnarray*}
F(X,Y) & = & C_0 \cdot \prod_{i=1}^m (X-\sigma_i Y) \cdot \prod_{i=m+1}^d (\delta_i X - Y) \ , \\
G(X,Y) & = & C_1 \cdot \prod_{j=1}^n (X-\tau_i Y) \cdot \prod_{j=n+1}^d (\eta_i X - Y) \ ,
\end{eqnarray*} 
where $C_0, C_1 \in K^{\times}$ satisfy $0 < |C_0|, |C_1| \le 1$ and $\max(|C_0|, |C_1|) = 1$.  

\begin{proposition} \label{ResultantProp} Let $\varphi \in K(z)$ have degree $d \ge 1$.  
With notations as above, we have 
\begin{equation*}
|\Res(\varphi)| \ = \ |C_0|^d |C_1|^d \cdot \prod_{i, j = 1}^d \|\alpha_i, \beta_j\| \ . 
\end{equation*} 
\end{proposition}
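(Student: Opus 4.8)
The plan is to reduce this identity to the classical product formula for the resultant of two binary forms, combined with the description of the chordal metric $\|\cdot,\cdot\|$ in homogeneous coordinates. I take as given the factorizations of $F$ and $G$ displayed just above the statement, the normalization $\max(|C_0|,|C_1|)=1$, and the fact that $|\Res(\varphi)| = |\Res(F,G)|$, where $F$ and $G$ are viewed as binary forms of degree exactly $d$. Recall that the resultant of binary forms is multiplicative in each argument, satisfies $\Res(c,H) = c^{\deg H}$ for a nonzero constant $c$, and for two linear forms satisfies $\Res(uX+vY,\,u'X+v'Y) = uv'-u'v$ (its Sylvester matrix being $2\times 2$). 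Since the displayed factorizations write $F$ and $G$ each as a constant ($C_0$, resp. $C_1$) times a product of exactly $d$ linear forms $\ell_i$ (resp. $m_j$), these properties give
\[
\Res(F,G) \ = \ C_0^d\, C_1^d \prod_{i=1}^d\prod_{j=1}^d \Res(\ell_i,m_j) \ ,
\]
where $\ell_i = X-\sigma_i Y$ for $i\le m$, $\ell_i=\delta_i X - Y$ for $i>m$, and $m_j = X-\tau_j Y$ for $j \le n$, $m_j = \eta_j X - Y$ for $j>n$. The exponent $d$ on each of $C_0$ and $C_1$ is precisely $\deg G = \deg F = d$; this is the one place where it matters that $F$ and $G$ are handled as forms of degree $d$, not as polynomials in $z$ of possibly smaller degree.

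\emph{Next} I would recall that for $P,Q \in \PP^1(K)$ with homogeneous coordinate vectors $\mathbf p, \mathbf q \in K^2\setminus\{\mathbf 0\}$ one has $\|P,Q\| = |\det[\mathbf p \mid \mathbf q]|/(\|\mathbf p\|\,\|\mathbf q\|)$, where $\|\cdot\|$ is the sup-norm on $K^2$; this is immediate from the piecewise formula for $\|x,y\|$ recalled in \S\ref{PreliminariesSection}. The homogeneous coordinates chosen in the setup for the zeros — $(1:\sigma_i)$ with $|\sigma_i|\le1$, resp. $(\delta_i:1)$ with $|\delta_i|<1$ — and, likewise, those chosen for the poles, all have sup-norm $1$. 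Hence $\|\alpha_i,\beta_j\|$ is simply the absolute value of the $2\times 2$ determinant formed from the two corresponding coordinate vectors. Running through the four types of pairs $(i,j)$, according to whether each of $\alpha_i,\beta_j$ lies inside or outside the closed unit disc, one checks directly that this determinant equals $\pm\Res(\ell_i,m_j)$ in every case; for instance, when $i\le m$ and $j>n$ the determinant of the rows $(1,\sigma_i)$ and $(\eta_j,1)$ is $1-\sigma_i\eta_j = -\Res(X-\sigma_i Y,\ \eta_j X - Y)$, and the remaining three cases are entirely analogous. Therefore $|\Res(\ell_i,m_j)| = \|\alpha_i,\beta_j\|$ for all $i,j$.

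Combining the two steps, I would take absolute values in the displayed product formula and substitute, obtaining
\[
|\Res(\varphi)| \ = \ |\Res(F,G)| \ = \ |C_0|^d\,|C_1|^d \prod_{i,j=1}^d |\Res(\ell_i,m_j)| \ = \ |C_0|^d\,|C_1|^d \prod_{i,j=1}^d \|\alpha_i,\beta_j\| \ ,
\]
which is the asserted formula. The steps requiring the most care are the bookkeeping in the first step — correctly invoking multiplicativity of the resultant and keeping the exponents on $C_0$ and $C_1$ equal to $d$ — together with the short, slightly fiddly case check in the second step that the sign-normalized linear-factor resultants coincide with the homogeneous cross-products of the chosen primitive coordinate vectors. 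Everything else is a direct application of the standard properties of $\Res$ and of the spherical metric recalled in \S\ref{PreliminariesSection}.
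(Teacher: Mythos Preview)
Your proof is correct and arrives at the same case-by-case identification as the paper, but the route is genuinely different. The paper first perturbs $\varphi$ by an element of $\GL_2(\cO)$ to ensure no zero or pole sits at $\infty$, then \emph{dehomogenizes}: it sets $z=X/Y$, obtains honest degree-$d$ polynomials $f(z)=a_d\prod(z-\alpha_i)$ and $g(z)=b_d\prod(z-\beta_j)$ with $a_d=C_0\prod\delta_i$ and $b_d=C_1\prod\eta_j$, invokes the classical one-variable product formula $\Res(f,g)=a_d^d b_d^d\prod(\alpha_i-\beta_j)$, and then unwinds $a_d,b_d$ and the differences $\alpha_i-\beta_j$ back into the $\sigma_i,\delta_i,\tau_j,\eta_j$ to match the four spherical-distance cases.

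Your argument stays homogeneous throughout: you use multiplicativity of the resultant of binary forms to factor $\Res(F,G)$ as $C_0^dC_1^d\prod_{i,j}\Res(\ell_i,m_j)$, then observe that for two linear forms the resultant is the $2\times2$ cross-product, which is exactly the numerator in the chordal-metric formula $\|P,Q\|=|\det[\mathbf p\mid\mathbf q]|/(\|\mathbf p\|\,\|\mathbf q\|)$; since the chosen coordinate vectors all have sup-norm $1$, the denominator disappears and $|\Res(\ell_i,m_j)|=\|\alpha_i,\beta_j\|$ follows immediately. This buys you a cleaner argument with no perturbation step and no need to track what happens to $a_d,b_d$ under dehomogenization; the paper's approach, on the other hand, appeals only to the most familiar form of the resultant product formula and avoids invoking multiplicativity for binary forms. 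The final four-case verifications are essentially identical in both proofs.
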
 

\begin{proof}
By perturbing $\varphi$ slightly we can assume that none of its zeros or poles are the point $\infty = (0:1)$, 
while preserving the distances $\|\alpha_i,\beta_j\|$ and the absolute values $|C_0|$, $|C_1|$. 
(For instance, we can replace $\varphi$ with $\varphi \circ \gamma$ for 
a suitable $\gamma \in \GL_2(\cO)$, sufficiently close to the identity).  
If we expand 
\begin{eqnarray*} 
F(X,Y) & = & a_d X^d + a_{d-1} X^{d-1} Y + \cdots + a_0 Y^d \ , \\
G(X,Y) & = & b_d X^d + b_{d-1} X^{d-1} Y + \cdots + b_0 Y^d \ ,
\end{eqnarray*} 
then $|\Res(\varphi)| = |\Res(F,G)|$ where 
\begin{equation} \label{ResultantFormula}
\Res(F,G) \ = \ \det\Bigg( \ \left[ \begin{array}{cccccccc} 
                          a_d & a_{d-1} & \cdots &  a_1    &    a_0     &         &      &      \\
                              & a_d & a_{d-1}    & \cdots  &    a_1     &    a_0  &      &      \\
                              &     &            &         & \vdots     &         &      &      \\
                              &     &            &    a_d  &   a_{d-1}  & \cdots  & a_1  & a_0  \\
                          b_d & b_{d-1} & \cdots &  b_1    &    b_0     &         &      &      \\
                              & b_d & b_{d-1}    & \cdots  &    b_1     &    b_0  &      &      \\
                              &     &            &         & \vdots     &         &      &      \\
                              &     &            &    b_d  &   b_{d-1}  & \cdots  & b_1  & b_0 
                             \end{array} \right] \ \Bigg) \ , 
\end{equation}
Here, $a_d = C_0 \cdot \prod_{i=m+1}^d \delta_i$ and $b_d = C_1 \cdot \prod_{j=n+1}^d \eta_j$.  

Now dehomogenize $F(X,Y)$ and $G(X,Y)$, setting $z = X/Y$, obtaining 
\begin{eqnarray*} 
f(z) & = & a_d z^d + a_{d-1} z^{d-1} + \cdots + a_0 \ = \ a_d \cdot \prod_{i=1}^d (z-\alpha_i) \ , \\
g(z) & = & b_d z^d + b_{d-1} z^{d-1} + \cdots + b_0 \ = \ b_d \cdot \prod_{j=1}^d (z-\beta_j) \ .
\end{eqnarray*} 
where $z = X/Y$ and now $\alpha_1, \ldots, \alpha_d, \beta_1, \ldots, \beta_d \in K$.  Evidently 
\begin{equation*}
\begin{array}{ll}
\alpha_1 = \sigma_1, \ldots, \alpha_m = \sigma_m, & \quad \alpha_{m+1} = 1/\delta_{m+1} \ldots, \alpha_d = 1/\delta_d \ , \\
\beta_1 = \tau_1, \ldots, \beta_n = \tau_n, & \quad \beta_{n+1} = 1/\eta_{n+1} \ldots, \beta_d = 1/\eta_d \ .
\end{array}
\end{equation*}

The resultant $\Res(f,g)$ is given by the same determinant (\ref{ResultantFormula}) as $\Res(F,G)$.  
Since $a_d, b_d \ne 0$, a well-known formula for the resultant (see [L], Proposition 10.3) gives 
\begin{equation*} 
\Res(f,g) \ = \ (a_d)^d \cdot (b_d)^d \cdot \prod_{i,j = 1}^d (\alpha_i-\beta_j) \ .
\end{equation*}  
Inserting the above values for $a_d$, $b_d$ and the $\alpha_i$, $\beta_j$, then simplifying, we see that 
\begin{eqnarray*}
|\Res(F,G)| \ = \ |\Res(f,g)| & = & |C_0|^d \cdot |C_1|^d \cdot \prod_{i=1}^m \prod_{j=1}^n |\sigma_i - \tau_j| 
					\cdot \prod_{i=m+1}^d \prod_{j=1}^n | 1 - \delta_i \tau_j|\\
    &  & \qquad \quad 
             \cdot \prod_{i=1}^m \prod_{j=n+1}^d | \sigma_i \eta_j - 1|  
   \cdot \prod_{i=m+1}^d \prod_{j=n+1}^d | \eta_j - \delta_i| \ .
\end{eqnarray*} 
Here 
\begin{equation*}
\left\{ \begin{array}{ll}
   |\sigma_i - \tau_j| = \|\alpha_i, \beta_j\|       & \text{ for $i = 1, \ldots, m$, $j= 1, \ldots, n$;} \\
   |1 - \delta_i \tau_j| = 1 = \|\alpha_i, \beta_j\| &  \text{ for $i = m+1, \ldots, d$, $j= 1, \ldots, n$;} \\
   |\sigma_i \eta_j - 1| = 1 = \|\alpha_i, \beta_j\| &  \text{ for $i = 1, \ldots, m$, $j= n+1, \ldots, d$;  } \\
   |\eta_j - \delta_i| = \|\alpha_i, \beta_j\|       &  \text{ for $i = m+1, \ldots, d$, $j= n+1, \ldots, d$.}
    \end{array} \right.
\end{equation*} 
Thus $|\Res(\varphi)| = |\Res(F,G)| = |C_0|^d \cdot |C_1|^d \cdot \prod_{i,j= 1}^d \|\alpha_i,\beta_j\|$. 
\end{proof} 

\begin{corollary} \label{ResultantBoundCor}
Let $\varphi(z) \in K(z)$ have degree $d \ge 1$.  
Then $\GPR(\varphi) \ge |\Res(\varphi)|$  and $\GIR(\varphi)^d \cdot B_0(\varphi) \ge |\Res(\varphi)|$. 
In particular  $\GIR(\varphi)\ge |\Res(\varphi)|^{1/d}$ and  $B_0(\varphi) \ge |\Res(\varphi)|$. 
\end{corollary}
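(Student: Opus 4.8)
The plan is to derive each of the inequalities $\RP(\varphi)\ge|\Res(\varphi)|$, $\GPR(\varphi)\ge|\Res(\varphi)|$, $|\Res(\varphi)|\le\GIR(\varphi)^d$, and $|\Res(\varphi)|\le\GIR(\varphi)^d\,B_0(\varphi)$ by comparison with the product formula of Proposition~\ref{ResultantProp}; then $B_0(\varphi)\ge\RP(\varphi)\ge|\Res(\varphi)|$ by Proposition~\ref{GaussInequalitiesProp}, and $\GIR(\varphi)\ge|\Res(\varphi)|^{1/d}$ by taking $d$-th roots, which together with $\GIR(\varphi),B_0(\varphi)\le 1$ is the whole Corollary. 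The first inequality is immediate: in $|\Res(\varphi)|=|C_0|^d|C_1|^d\prod_{i,j=1}^d\|\alpha_i,\beta_j\|$ there are $d^2\ge 1$ factors $\|\alpha_i,\beta_j\|\in[0,1]$ and $|C_0|,|C_1|\le 1$, so the whole product is at most $\min_{i,j}\|\alpha_i,\beta_j\|=\RP(\varphi)$.

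For $|\Res(\varphi)|\le\GIR(\varphi)^d$, I would fix a normalized representation $(F,G)$, write $F=\sum_k a_kX^kY^{d-k}$, $G=\sum_k b_kX^kY^{d-k}$, and use that the resultant of two binary forms of degree $d$ is an $\mathrm{SL}_2$-semiinvariant of bidegree $(d,d)$ in the coefficients: concretely $\Res(F,G)=\pm\det\mathrm{Bez}(F,G)$, where the Bezoutian $\mathrm{Bez}(F,G)$ is the $d\times d$ matrix whose entries are $\ZZ$-linear combinations of the $2\times 2$ brackets $[k\ell]:=a_kb_\ell-a_\ell b_k$. Thus $\Res(F,G)$ is an integer polynomial, homogeneous of degree $d$, in the $[k\ell]$; since $a_k,b_k\in\cO$ we have $[k\ell]\in\cO$, so the ultrametric inequality gives $|\Res(F,G)|\le(\max_{k\ne\ell}|[k\ell]|)^d$, which equals $\GIR(\varphi)^d$ by Proposition~\ref{GIRProp}.

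For the remaining two bounds I would normalize and rescale. To treat $\GPR$: let $\xi$ be the preimage of $\zeta_G$ with $\diam_G(\xi)=\GPR(\varphi)=:g$ (a type~II point, of local degree $m=\deg_\varphi(\xi)$); if $g=1$ then $\xi=\zeta_G$ and there is nothing to prove, so assume $g<1$. Since $\GPR$ and $|\Res|$ are invariant under pre- and post-composition by $\GL_2(\cO)$, which acts transitively on type~II points of a given Gauss diameter, I would arrange $\xi=\zeta_{0,g}$ and then precompose with $\mu\colon z\mapsto\lambda z$, $|\lambda|=g$, so that $\psi:=\varphi\circ\mu$ fixes $\zeta_G$, hence has nonconstant reduction, hence $|\Res(\psi)|\le 1$. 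Using the transformation law $\Res(F\circ\gamma,G\circ\gamma)=(\det\gamma)^{d^2}\Res(F,G)$ for $\gamma\in\GL_2(K)$, together with the observation that the $\cO$-content of $(F\circ\mu,G\circ\mu)$ has absolute value equal to the sup-norm of $F(z,1)$ on $D(0,g)$ — which equals that of $G(z,1)$ precisely because $\varphi(\zeta_{0,g})=\zeta_G$ — one obtains an exact identity expressing $|\Res(\varphi)|$ through $|\Res(\psi)|$, $g$, and that sup-norm. Feeding in the ramification information (the reduction of $\psi$ has degree $m$, bounding $|\Res(\psi)|$) and the location of the zeros and poles of $\varphi$ relative to $\xi$ — surjectivity of $\varphi_*\colon T_\xi\to T_{\zeta_G}$ forces every zero--pole path $[\alpha_i,\beta_j]$ through $\xi$, and, $\xi$ being the farthest preimage of $\zeta_G$, no preimage of $\zeta_G$ lies strictly beyond it — one deduces $\diam_G(\xi)^m\ge|\Res(\varphi)|$, whence $\GPR(\varphi)\ge|\Res(\varphi)|^{1/m}\ge|\Res(\varphi)|$. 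For $|\Res(\varphi)|\le\GIR(\varphi)^d\,B_0(\varphi)$: if $B_0(\varphi)=1$ this is the $\GIR$ bound already proved, so assume $B_0(\varphi)<1$; by Proposition~\ref{B_0RationalityProp}, $B_0(\varphi)\in|K^\times|$, and after a $\GL_2(\cO)$-normalization $\varphi(\cD(0,B_0)^-)$ is a ball while $\varphi(\cD(0,B_0))=\BPP^1_K$, so $\cD(0,B_0)$ contains a preimage of $\zeta_G$ together with a zero and a pole of $\varphi$. I would then run the same rescaling-and-content comparison, this time bookkeeping simultaneously the image $\varphi(\zeta_G)$ (which supplies the factor $\GIR(\varphi)^d$) and the extremal ball, splitting the factors $\|\alpha_i,\beta_j\|$ of Proposition~\ref{ResultantProp} according to how the zeros and poles sit relative to $\zeta_{0,B_0}$ and to $\zeta_G$.

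The last inequality is the main obstacle. One cannot simply invoke $B_0(\varphi)\ge\RP(\varphi)$: for $\varphi(z)=z^2-1/p^2$ one has $\GIR(\varphi)^d\,B_0(\varphi)=|\Res(\varphi)|$ yet $\GIR(\varphi)^d\,\RP(\varphi)<|\Res(\varphi)|$, because there $B_0(\varphi)=1$ is forced \emph{not} by the root--pole distance but because $\varphi$ takes some value with a single, totally ramified preimage. So the argument must use the genuine ball-mapping property of $B_0$, and hence the crucial-set analysis underlying Proposition~\ref{B_0RationalityProp}; the technical heart is controlling the content (equivalently, the sup-norm of $F(z,1)$ on the relevant disc) uniformly over the possible ways the zeros and poles of $\varphi$ can be distributed among the tangent directions at the pertinent type~II points.
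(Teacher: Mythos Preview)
Your diagnosis of the ``main obstacle'' is mistaken, and this is where your proposal goes astray. You correctly observe that for $\varphi(z)=z^2-1/p^2$ one has $\GIR(\varphi)^d\cdot\RP(\varphi)<|\Res(\varphi)|$, and conclude that one cannot simply invoke $B_0(\varphi)\ge\RP(\varphi)$. But the paper \emph{does} simply invoke $B_0\ge\RP$ --- after a normalization you overlooked. The key point is that $\RP$ is \emph{not} invariant under post-composition by $\GL_2(\cO)$, while $\GIR$, $B_0$, and $|\Res|$ all are. So one first post-composes by some $\gamma\in\GL_2(\cO)$ taking $\varphi(\zeta_G)$ to $\zeta_{0,r}$ with $r=\GIR(\varphi)$; for the resulting map the normalized representation has $|C_0|=\GIR(\varphi)$ and $|C_1|=1$, and Proposition~\ref{ResultantProp} gives
\[
|\Res(\varphi)| \;=\; \GIR(\varphi)^d\cdot\prod_{i,j}\|\alpha_i,\beta_j\|
\;\le\;\GIR(\varphi)^d\cdot\min_{i,j}\|\alpha_i,\beta_j\|
\;=\;\GIR(\varphi)^d\cdot\RP(\gamma\circ\varphi)
\;\le\;\GIR(\varphi)^d\cdot B_0(\varphi).
\]
In your example, $\gamma\circ\varphi$ works out to $p^2z^2/(z^2-1/p^2)$, whose zeros (at $0$) and poles (at $\pm 1/p$) satisfy $\RP(\gamma\circ\varphi)=1$, so the inequality is immediate.

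The same trick handles $\GPR$: choose $\alpha,\beta\in\PP^1(K)$ in distinct tangent directions at a minimal preimage $Q$ of $\zeta_G$, with $\varphi(\alpha),\varphi(\beta)$ in distinct directions at $\zeta_G$; then post-compose by $\gamma\in\GL_2(\cO)$ sending $(\varphi(\alpha),\varphi(\beta))$ to $(0,\infty)$. Now $\alpha$ is a zero and $\beta$ a pole of $\gamma\circ\varphi$ with $\|\alpha,\beta\|=\GPR(\varphi)$, and the product formula gives $|\Res(\varphi)|\le\|\alpha,\beta\|=\GPR(\varphi)$ in one line. Your rescaling-and-content argument for $\GPR$ is both more elaborate and not fully justified (the claim that ``every zero--pole path goes through $\xi$'' does not follow from surjectivity of $\varphi_*$, and the passage to $\diam_G(\xi)^m\ge|\Res(\varphi)|$ is asserted rather than proved).

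Your Bezoutian argument for $\GIR(\varphi)^d\ge|\Res(\varphi)|$ is correct and is a genuinely different route to that particular inequality; but since the stronger bound $\GIR^d\cdot B_0\ge|\Res|$ falls out of the normalization trick above, it is not needed.
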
 

\begin{proof}  Recall that $\GPR(\varphi)$, 
$\GIR(\varphi)$, $B_0(\varphi)$, and $|\Res(\varphi)|$ are invariant under pre- and post-
composition of $\varphi$ with elements of $\GL_2(\cO)$.  

\smallskip
To show that $\GPR(\varphi) \ge |\Res(\varphi)|$, put $R = \GPR(\varphi)$ 
and fix $Q \in \varphi^{-1}(\{\zeta_G\})$ with $\diam_G(Q) = R$.   
Choose $\gamma_1 \in \GL_2(\cO)$ so that $\gamma_1(\zeta_{0,R}) = Q$; after replacing $\varphi$ with $\varphi \circ \gamma_1$
we can assume that $Q = \zeta_{0,R}$.  There are at most $d$ directions $\vv \in T_P$ for which
$\varphi(\cB_Q(\vv)^-) = \BPP^1_K$, since such a direction must contain a solution to 
$\varphi(\alpha)) = 0$.  Similarly, for each $\vw \in T_{\zeta_G}$, 
there are at most $d$ directions $\vv \in T_Q$ for which $\varphi_*(\vv) = \vw$.  
Since the map $\varphi_*: T_Q \rightarrow T_{\zeta_G}$ is surjective, we can find directions 
$\vv_1, \vv_2 \in T_Q$ satisfying the following conditions:
\begin{enumerate}
\item $\vv_1, \vv_2 \in T_Q \backslash \{\vv_\infty\}$; 
\item $\vv_1 \ne \vv_2$ and $\varphi_*(\vv_1) \ne \varphi_*(\vv_2)$;
\item $\varphi(\cB_Q(\vv_1)^-) = \cB_{\zeta_G}(\varphi_*(\vv_1))^-$ and 
             $\varphi(\cB_Q(\vv_2)^-)  = \cB_{\zeta_G}(\varphi_*(\vv_2))^-$ are balls.
\end{enumerate}
Fix $\alpha \in \PP^1(K) \cap \cB_Q(\vv_1)^-$ and $\beta \in \PP^1(K) \cap \cB_Q(\vv_2)^-$, 
and note that $\|\alpha,\beta\| = R$.  Put $A = \varphi(a)$, $B = \varphi(\beta)$.    

Since $A$ and $B$ belong to distinct tangent directions at $\zeta_G$, by (\cite{B-R}, Corollary 2.13(B))
there is a $\gamma_2 \in \GL_2(K)$ which takes the triple $(A,\zeta_G,B)$ to $(0,\zeta_G,\infty)$. 
Since $\gamma_2(\zeta_G) = \zeta_G$, and the stabilizer of $\zeta_G$ is $K^\times \cdot \GL_2(\cO)$, 
we can scale $\gamma_2$ so that it belongs to $\GL_2(\cO)$.  After replacing $\varphi$ with $\gamma_2 \circ \varphi$,
we can assume that $\varphi(\alpha) = 0$ and $\varphi(\beta) = \infty$.  By Proposition \ref{ResultantProp}
\begin{equation*}
\GPR(\varphi) \ = \ R \ = \ \|\alpha,\beta\| \ \ge \ |\Res(\varphi)| \ .
\end{equation*} 

\smallskip
To show that $\GIR(\varphi)^d \cdot B_0(\varphi) \ge |\Res(\varphi)|$, put $r = \GIR(\varphi)$ and 
choose $\gamma \in \GL_2(\cO)$ with $\gamma(\varphi(\zeta_G)) = \zeta_{0,r}$. 
After replacing $\varphi$ with $\gamma \circ \varphi$ we can assume that $\varphi(\zeta_G) = \zeta_{0,r}$.

In this setting, if $(F,G)$ is a normalized representation of $\varphi$, 
and notations are as in Proposition \ref{ResultantProp}, then $|C_0| = r = \GIR(\varphi)$ and $|C_1| = 1$.  
From the inequality
$B_0(\varphi) \ge \RP(\varphi)$ it follows that $B_0(\varphi) \ge \min_{i,j} \|\alpha_i, \beta_j\|$.  Furthermore,
$\|\alpha_i, \beta_j\| \le 1$ for all $i, j$, and $|\GIR(\varphi)| \le 1$.  

Thus, by Proposition \ref{ResultantProp}, we have 
$\GIR(\varphi)^d \cdot B_0(\varphi) \ge |C_0|^d \cdot \min_{i,j} \|\alpha_i, \beta_j\| \ \ge \ |\Res(\varphi)|$.   
Since $\GIR(\varphi) \le 1$ and $B_0(\varphi) \le 1$, the last two inequalities in the Corollary are immediate.
\end{proof}

\section{Proofs of Theorems \ref{FirstCor}, \ref{MainThm0}, and \ref{Classical_Lip}} \label{MainThmSection} 

Our main result is 

\begin{theorem}  \label{MainThm}  Let $K$ be a complete, algebraically closed nonarchimedean field,  
and let $\varphi(z) \in K(z)$ have degree $d \ge 1$.  Then
\begin{equation} \label{MainBound} 
\Lip_\Berk(\varphi) \ \le \ \max\Big( \frac{1}{\GIR(\varphi) \cdot B_0(\varphi)^d} \, , 
                                  \frac{d}{\GIR(\varphi)^{1/d} \cdot B_0(\varphi)} \Big) \ .
\end{equation} 
\end{theorem} 

This is a restatement of Theorem \ref{MainThm0} in the Introduction.  
Before giving the proof of Theorem \ref{MainThm}, we will make some reductions.
Put $B_0 = B_0(\varphi)$, and consider a ball $\cB(a,B_0)^-$.  By the definition of $B_0$, the image 
$\varphi(\cB(a,B_0)^-)$ is a ball.  In particular there is an $\alpha \in \PP^1(K)$ with 
$\alpha \notin \varphi(\cB(a,B_0)^-)$.  By choosing $\gamma_1, \gamma_2 \in \GL_2(\cO)$ 
with $\gamma_1(\alpha) = \infty$, $\gamma_2(0) = a$, 
and replacing $\varphi$ with $\Phi = \gamma_1 \circ \varphi \circ \gamma_2$,
we can arrange that that $a = 0$ and that $\Phi(D(0,B_0)^-)$ omits $\infty$.
   
This means that $\Phi(D(0,B_0)^-)$ is a disc $D(c_0,R)^-$, where $\Phi(0) = c_0$.
By the Weierstrass Preparation theorem, we can expand $\Phi(z)$ on $D(0,B_0)^-$ in the form 
\begin{equation*}
\Phi(z) \ = \ c_0 + (c_1 z + c_2 z^2 + \cdots + c_n z^n) \cdot U(z)
\end{equation*} 
where $1 \le n \le d$ is the number of solutions to $\Phi(z) = c_0$ in $B(0,B_0)^-$, and 
$U(z) = 1 + u_1 z + u_2 z^2 + \cdots$ is a unit power series converging on $D(0,B_0)^-$; 
here $|u_i| \le 1/B_0^i$ for each $i$.  

Put 
\begin{equation*}
f_\Phi(r) \ = \ \max_{1 \le k \le n} |c_k| r^k \ . 
\end{equation*} 
By the theory of Newton polygons, for each $r \in |K^{\times}|$ with $0 < r < B_0$, we have $\Phi(D(a,r)) = D(c_0,f(r))$, 
so 
\begin{equation} \label{MapFormula}
\Phi(\zeta_{0,r}) \ = \ \zeta_{c_0,f_\Phi(r)}
\end{equation} 
By the continuity of the action of $\Phi$ on $\BPP^1_K$, 
(\ref{MapFormula}) holds for all $r \in [0,B_0]$, and $f_\Phi(B_0) = R$.


We will prove Theorem \ref{MainThm} by applying Corollary \ref{SupCor}, and dealing with five cases: 
four cases dealing with radial paths $[0,B_0]$ contained in  balls $\cB(0,B_0)^-$ according as
\begin{equation*}
\left\{\begin{array}{l} 
                \text{$|c_0| \le 1$ and $f_\Phi(B_0) \le 1$ ,} \\
                \text{$|c_0| \le 1$ and $f_\Phi(B_0) > 1$ ,} \\
                \text{$|c_0| > 1$ and $f_\Phi(B_0) \le |c_0|$ ,} \\
                \text{$|c_0| > 1$ and $f_\Phi(B_0) > |c_0|$ ,} 
\end{array} \right.
\end{equation*}
and one case dealing with radial paths $[\xi,\zeta_G]$ in the central ball 
\begin{equation*}
\cB_\rho(\zeta_G,-\log(B_0))^- \ = \ \{x \in \BPP^1_K : \diam_G(x) \ge B_0 \} \ .
\end{equation*} 

\smallskip
Case 1 is covered by the following Proposition: 

\begin{proposition} \label{Case1}  Let $\Phi(z) \in K(z)$ have degree $d \ge 1$; write $B_0 = B_0(\Phi)$, 
put $c_0 = \Phi(0)$, and assume $\Phi$ has no poles in $D(0,B_0)^-$, so $\Phi(D(0,B_0)^-) = D(c_0,R)^-$ is a disc. 
Suppose $|c_0| \le 1$ and $R \le 1$. 
Then the restriction of $\Phi$ to $[0,\zeta_{0,B_0}]$ satisfies 
\begin{equation*}
\Lip_\Berk\big(\Phi\vert_{[0,\zeta_{0,B_0}]}\big) \ \le \ \frac{d}{B_0} \ .
\end{equation*} 
\end{proposition}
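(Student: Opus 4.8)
The plan is to reduce the claim to an elementary estimate for the piecewise-monomial function $f_\Phi(r) = \max_{1 \le k \le n}|c_k|\,r^k$ introduced before the Proposition. First I would coordinatize the segment $[0,\zeta_{0,B_0}]$ by $r \mapsto \zeta_{0,r}$ for $r \in [0,B_0]$; since $r \le B_0 \le 1$, the formula (\ref{diam_G1}) gives $\diam_G(\zeta_{0,r}) = r$. By (\ref{MapFormula}) one has $\Phi(\zeta_{0,r}) = \zeta_{c_0,f_\Phi(r)}$, and this is where the hypotheses enter: because $|c_0| \le 1$ and $f_\Phi(r) \le f_\Phi(B_0) = R \le 1$, formula (\ref{diam_G1}) again gives $\diam_G(\Phi(\zeta_{0,r})) = f_\Phi(r)$. (This is precisely the point at which Case~1 differs from Cases~2--4, where $\diam_G$ of the image instead involves a reciprocal or a factor $1/|c_0|^2$.) Thus $\Phi$ carries the radial segment $[0,\zeta_{0,B_0}]$ onto the radial segment $[c_0,\zeta_{c_0,R}] \subset [c_0,\zeta_G]$, and on both segments $\diam_G$ increases monotonically toward $\zeta_G$.

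Next I would apply Corollary~\ref{DerivCor} to the segment $I = [0,\zeta_{0,B_0}]$: in the notation there, using the identifications of the previous paragraph, the functions $F_{\Phi,i}$ are exactly the monomial pieces of $f_\Phi$, say $F_{\Phi,i}(r) = |c_{k(i)}|\,r^{k(i)}$ on $[r_i,s_i]$ with $1 \le k(i) \le n$, so that
\[
\Lip_\Berk\big(\Phi\vert_{[0,\zeta_{0,B_0}]}\big) \ = \ \max_{1 \le i \le \ell}\ \sup_{r \in (r_i,s_i)} k(i)\,|c_{k(i)}|\,r^{k(i)-1}\ .
\]
Each inner supremum equals $k(i)\,|c_{k(i)}|\,s_i^{\,k(i)-1}$, which is the left-derivative $f_\Phi'(s_i^-)$; by Lemma~\ref{fprimeMonoLemma} the right-derivative $f_\Phi'$ is non-decreasing on $[0,B_0)$, so the maximum over $i$ is attained on the last piece and equals $f_\Phi'(B_0^-) = k(\ell)\,|c_{k(\ell)}|\,B_0^{\,k(\ell)-1}$. (This monotonicity is essential: on an interior piece $s_i$ is small, and the naive bound $|c_{k(i)}|\,s_i^{k(i)-1} \le 1/s_i$ would go the wrong way.)

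Finally I would extract the two constants: since $|c_{k(\ell)}|\,B_0^{\,k(\ell)} = f_\Phi(B_0) = R$, one gets $f_\Phi'(B_0^-) = k(\ell)\cdot R/B_0$, and because $k(\ell) \le n \le d$ and $R \le 1$ this is at most $d/B_0$, which is the assertion. I do not expect a genuine obstacle: the substantive work — the piecewise monomial description of $f_\Phi$, the monotonicity of $f_\Phi'$, and the passage from $\diam_G$ to the metric $d$ — is already packaged in Lemma~\ref{DecompositionLemma}, Lemma~\ref{fprimeMonoLemma}, and Corollary~\ref{DerivCor}. The only steps requiring care are checking that $\diam_G$ linearizes along these two specific segments (where $|c_0| \le 1$ and $R \le 1$ are used) and keeping track that the exponent of the dominant monomial at $r = B_0$ is at most $d$.
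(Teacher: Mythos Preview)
Your proposal is correct and follows essentially the same route as the paper: identify $\diam_G(\Phi(\zeta_{0,r}))$ with $f_\Phi(r)$ using the hypotheses $|c_0|\le 1$ and $R\le 1$, invoke the monotonicity of $f_\Phi'$ from Lemma~\ref{fprimeMonoLemma} so that the supremum of the derivative is attained at $r=B_0^-$, and then use $k(\ell)\le n\le d$ and $f_\Phi(B_0)=R\le 1$ to conclude. The paper's proof is essentially a terser version of yours, writing $F_\Phi(r)=f_\Phi(r)$ in place of your explicit appeal to~(\ref{diam_G1}) and Corollary~\ref{DerivCor}.
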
 

\begin{proof} 
As in Lemma \ref{fprimeMonoLemma} 
we can partition $[0,B_0]$ into subintervals $[r_i,r_{i+1}]$
where $0 =  r_1 < \cdots < r_{\ell+1} = B_0$, such that on $[r_{i-1},r_i]$ we have  
\begin{equation*}
f_\Phi(r) \ = \ f_i(r) \ = \ |c_{k(i)}| \cdot r^{k(i)} \ .
\end{equation*} 
Write $f_\Phi^{\prime}(r)$ for the right-derivative of $f_\Phi(r)$ on $[0,B_0)$.  
By Lemma \ref{fprimeMonoLemma}, $f_\Phi^{\prime}(r)$ is non-decreasing.  Hence
\begin{equation} \label{fprimeLimit1}
\sup_{r \in [0,B_0)} f_\Phi^\prime(r) \ = \ \lim_{r \rightarrow B_0^-} f_\ell^\prime(r) 
\ = \ k(\ell) \cdot |c_{k(\ell)}| \cdot B_0^{k(\ell) - 1} \ = \ \frac{k(\ell) \cdot f_\Phi(B_0)}{B_0} \ .
\end{equation}  

Since $|c_0| \le 1$ and $f_\Phi(B_0) \le 1$, we have $\Phi(D(0,B_0)^-) \subset D(0,1)$, 
so $F_\Phi(r) = f_\Phi(r)$ for all $r \in [0,B_0]$. 
Using the inequalities $k(\ell) \le n \le d$ and $f_\Phi(B_0) \le 1$ we conclude 
from (\ref{fprimeLimit1}) that 
\begin{equation*} 
\Lip_\Berk\big(\Phi\vert_{[0,B_0)}\big) \ = \ \sup_{r \in [0,B_0)} f_\Phi^\prime(r)\ \le \ \frac{d}{B_0} \ .
\end{equation*} 
\end{proof} 

To deal with Case 2, we will need several lemmas.   The first is an elementary maximization bound 
from Calculus:

\begin{lemma} \label{CalculusLemma}  Let $H \ge 1$, and put  $g(x) = x \cdot H^{1/x}$ for $x > 0$.  
Then for each closed interval $[a,b] \subset (0,\infty)$, 
\begin{equation*}
\max_{x \in [a,b]} g(x) \ = \ \max\big(g(a),g(b)\big) \ .
\end{equation*} 
\end{lemma}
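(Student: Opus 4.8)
The plan is to reduce the statement to a one-variable monotonicity fact: I will show that $g$ is \emph{unimodal} on $(0,\infty)$, meaning it is non-increasing on an initial subinterval and non-decreasing on the complementary one. Granting this, the lemma follows at once: for any $[a,b]\subset(0,\infty)$ either $g$ is monotone on $[a,b]$ (when the turning point lies outside $[a,b]$), or $g$ decreases on $[a,c]$ and increases on $[c,b]$ for the turning point $c\in(a,b)$; on each monotone piece the extreme values occur at its endpoints, so in all cases $\max_{x\in[a,b]}g(x)=\max(g(a),g(b))$.

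To establish unimodality I would set $c=\ln(H)\ge 0$, so that $g(x)=x\,e^{c/x}$ on $(0,\infty)$, and differentiate: $g'(x)=e^{c/x}+x\,e^{c/x}\cdot(-c/x^2)=e^{c/x}\,(x-c)/x$. Since $e^{c/x}>0$ and $x>0$, the sign of $g'(x)$ agrees with the sign of $x-c$, so $g$ is strictly decreasing on $(0,c]$ and strictly increasing on $[c,\infty)$. In the degenerate case $H=1$ we have $c=0$ and $g(x)=x$, which is strictly increasing on all of $(0,\infty)$; the same conclusion applies with an empty decreasing piece.

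Finally I would assemble the two observations. Given $[a,b]\subset(0,\infty)$: if $b\le c$ then $g$ is decreasing on $[a,b]$ and $\max_{[a,b]}g=g(a)$; if $a\ge c$ then $g$ is increasing on $[a,b]$ and $\max_{[a,b]}g=g(b)$; and if $a<c<b$ then $\max_{x\in[a,c]}g(x)=g(a)$ and $\max_{x\in[c,b]}g(x)=g(b)$, whence $\max_{[a,b]}g=\max(g(a),g(b))$. There is no real obstacle here, since this is elementary calculus; the only points requiring a little care are the degenerate case $H=1$ and the sign bookkeeping for $g'$.
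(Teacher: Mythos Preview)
Your proof is correct and follows essentially the same route as the paper's: both compute $g'(x)=H^{1/x}(1-\ln(H)/x)$ and observe that the unique critical point $x=\ln(H)$ is a minimum, so the maximum on $[a,b]$ occurs at an endpoint. The only cosmetic difference is that the paper invokes convexity via $g''(x)=(\ln H)^2 x^{-3} H^{1/x}>0$, whereas you read the monotonicity directly from the sign of $g'$ and handle the three cases $b\le c$, $a\ge c$, $a<c<b$ explicitly; both arguments are equally elementary.
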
 

\begin{proof}  If $H = 1$ then $g(x) = x$ and the result is trivial.  
If $H > 1$, then $g^{\prime}(x) = (1-\ln(H)/x) \cdot H^{1/x}$ and $g^{\prime \prime}(x) = (\ln(H))^2/x^3 \cdot H^{1/x}$, 
so $g(x)$ is convex up for $x > 0$, and its unique minimum 
is at $x = \ln(H)$.  Thus the maximum value of $g(x)$ on $[a,b]$  
is achieved at an endpoint.
\end{proof}

The second is a bound for $\lim_{|z| \rightarrow 1^-} |\Phi(z)|$.
Recall that if $P,Q$ are distinct points in $\BPP^1_K$, 
the annulus $\Ann(P,Q)$ is the component of $\BPP^1_K \backslash \{P,Q\}$ containing $(P,Q)$.   

\begin{lemma} \label{LimitLemma} 
Let $\Phi(z) \in K(z)$ have degree $d \ge 2$; write $B_0 = B_0(\Phi)$.  If $\Phi(\zeta_G) = \zeta_{a,R}$, then 
\begin{equation} \label{Want}
\lim_{\substack{|z| \rightarrow 1^- \\ z \in K}} |\Phi(z)| \ = \ \max(|a|,R) \ .
\end{equation} 
\end{lemma}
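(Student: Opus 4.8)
The plan is to show that, as $r=|z|$ increases to $1$, the value $|\Phi(z)|$ becomes \emph{radial} (depending only on $r$) and tends to the value of the Gauss seminorm $|\Phi|_{\zeta_G}$, and then to identify $|\Phi|_{\zeta_G}$ with $\max(|a|,R)$ using the hypothesis $\Phi(\zeta_G)=\zeta_{a,R}$.

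First I would factor
\[
\Phi(z)\;=\;c\cdot\frac{\prod_i(z-\alpha_i)}{\prod_j(z-\beta_j)}
\]
over the zeros $\alpha_i$ and poles $\beta_j$ of $\Phi$ that lie in $K$ (a zero or pole at $\infty$ contributes nothing, since it does not affect $|\Phi(z)|$ for $z$ of bounded absolute value, so it can simply be omitted), with $c\in K^\times$. Because there are only finitely many $\alpha_i,\beta_j$, there is an $r_0<1$ such that no zero or pole of $\Phi$ has absolute value in the interval $(r_0,1)$. Then for every $r\in(r_0,1)$ and every $z\in K$ with $|z|=r$, each zero or pole $\gamma$ satisfies $|z|=r\neq|\gamma|$, so the ultrametric inequality gives $|z-\gamma|=\max(r,|\gamma|)$ with no exceptions; hence
\[
|\Phi(z)|\;=\;|c|\cdot\frac{\prod_i\max(r,|\alpha_i|)}{\prod_j\max(r,|\beta_j|)}\;=:\;g(r),
\]
a quantity depending on $z$ only through $r$. (Equivalently, one can phrase this using a normalized representation $(F,G)$ and Newton polygons: for $r$ close to $1$ the Gauss norms of $F(z,1)$ and $G(z,1)$ each have a strictly dominant monomial, so there is no cancellation on the circle $|z|=r$.)

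Next, letting $r\to 1^-$, each factor $\max(r,|\gamma|)$ tends to $\max(1,|\gamma|)$, so $g(r)$ tends to $|c|\prod_i\max(1,|\alpha_i|)/\prod_j\max(1,|\beta_j|)$, which is exactly the Gauss norm $|\Phi|_{\zeta_G}$. Thus the limit in question exists and equals $|\Phi|_{\zeta_G}$. To conclude, I would identify $|\Phi|_{\zeta_G}$ with $\max(|a|,R)$: by the definition of the action of $\Phi$ on $\BPP^1_K$, the point $\Phi(\zeta_G)$ is the seminorm $h\mapsto|h\circ\Phi|_{\zeta_G}$ on rational functions, and evaluating it on the coordinate function gives $|\Phi|_{\zeta_G}$; since $\Phi(\zeta_G)=\zeta_{a,R}$ and the seminorm of $\zeta_{a,R}$ applied to the coordinate is $\sup_{w\in D(a,R)}|w|=\max(|a|,R)$, we obtain $|\Phi|_{\zeta_G}=\max(|a|,R)$. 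Combining the two computations gives $(\ref{Want})$.

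All the steps are elementary; the point needing the most care is the claim that $|\Phi(z)|$ is genuinely constant on each circle $|z|=r$ for $r\in(r_0,1)$ — that is, the absence of cancellation, which is what forces the limit to \emph{exist} rather than merely to have a $\limsup$ — together with the bookkeeping that zeros and poles at $\infty$ do not enter the product, and that the hypothesis $\Phi(\zeta_G)=\zeta_{a,R}$ with $a,R$ finite rules out the degenerate case $\Phi(\zeta_G)=\infty$. The identification $|\Phi|_{\zeta_G}=\max(|a|,R)$ is the standard seminorm description of the action on $\BPP^1_K$ (see \cite{B-R}).
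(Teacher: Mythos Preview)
Your proof is correct and takes a genuinely different route from the paper's. The paper argues geometrically: it fixes the tangent direction $\vv_0\in T_{\zeta_G}$ towards $0$, sets $\vw=\Phi_*(\vv_0)$, invokes Rivera--Letelier's annulus mapping theorem to see that $\Phi$ maps a short annulus $\{r<|z|<1\}$ into an annulus at $\zeta_{a,R}$ in the direction $\vw$, and then does a case analysis on whether $|a|\le R$ or $|a|>R$ and on which direction $\vw$ points. Your approach instead factors $\Phi$ over its finite zeros and poles, observes that for $r$ in a suitable interval $(r_0,1)$ the ultrametric inequality makes $|\Phi(z)|$ depend only on $|z|=r$, takes the limit $r\to 1^-$ to obtain the Gauss seminorm $|\Phi|_{\zeta_G}$, and finally uses the seminorm description of the action on $\BAA^1_K$ to identify $|\Phi|_{\zeta_G}=|T|_{\zeta_{a,R}}=\max(|a|,R)$.

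Your argument is shorter and more elementary: it avoids the annulus mapping theorem and the case analysis entirely, and it makes transparent that the lemma is really just the statement that the coordinate function evaluated at $\Phi(\zeta_G)$ equals $\max(|a|,R)$. The paper's proof, on the other hand, stays within the geometric framework (tangent vectors, balls, annuli) used throughout \S\ref{MainThmSection}, and it yields the slightly finer information about \emph{where} $\Phi(z)$ lands (e.g.\ that for $r$ near $1$ the image lies in $D(b,R)^-$ or in a specific annulus), which is not needed for the lemma but fits the surrounding narrative. Both arguments are valid; yours is the more direct one for this particular statement.
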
 

\begin{proof}
Let $\vv_0 \in T_{\zeta_G}$ be the tangent direction towards $0$, 
and put $\vw = \Phi_*(\vv_0) \in T_{\zeta_{a,R}}$.
Fix a point $b \in B_{\zeta_{a,R}}(\vw)^- \cap \PP^1(K)$.  By (\cite{B-R}, Corollary 9.21 and Lemma 9.45), 
there are points $X \in (\zeta_G,0)$, $Y \in (\zeta_{a,R},b)$ 
such that $\Phi$ maps $[\zeta_G,X]$ homeomorphically onto $[\zeta_{a,R},Y]$,
and for each $x \in [\zeta_G,X]$,  
 $\Phi$ maps $\Ann(\zeta_G,x)$ onto $\Ann(\zeta_{a,R},\Phi(x))$.  
Put $r = \diam_\infty(x)$, $S = \diam_\infty(\Phi(x))$. 
When $r \rightarrow 1^-$, then $\Phi(x) \rightarrow \zeta_{a,R}$ and $S \rightarrow R$.
Note that
\begin{equation*}
K \cap \Ann(\zeta_G,x) \ = \ \{ z \in K : r < |z| < 1 \} \ .
\end{equation*} 

Consider the possibilities for $\Phi\big(K \cap \Ann(\zeta_G,x)\big)$.
 If $|a| \le R$, then $\zeta_{a,R} = \zeta_{0,R}$.  
In this situation, if $\vw \in T_{\zeta_{0,R}}$ points towards $0$, then for $r$ near enough $1$, 
$\Phi\big(K \cap \Ann(\zeta_G,x)\big) = \{ z \in K : S < |z| < R \}$.  If $\vw$ points towards $\infty$,
then for $r$ near enough $1$, $\Phi\big(K \cap \Ann(\zeta_G,x)\big) = \{ z \in K : R < |z| < S \}$.
Otherwise, $\Phi\big(K \cap \Ann(\zeta_G,x)\big) \subset D(b,R)^- \subset \{ z \in K : |z| = R \}$.  
In any case, 
\begin{equation*} 
\lim_{\substack{|z| \rightarrow 1^- \\ z \in K}} |\Phi(z)| \ = \ R \ = \ \max(|a|,R) \ .
\end{equation*}
If $|a| > R$, put $R_0 = |\,a|$.  Regardless of the direction $\vw$, when $r$ is close enough to $1$ we will have 
$\Phi\big(K \cap \Ann(\zeta_G,x)\big) \subset D(a,R_0)^- \subset \{z \in K : |z| = |a| \}$.  Thus
\begin{equation*} 
\lim_{\substack{|z| \rightarrow 1^- \\ z \in K}} |\Phi(z)| \ = \ |a| \ = \ \max(|a|,R) \ .
\end{equation*}
Hence (\ref{Want}) holds in all cases.
\end{proof} 

The third is a bound for $f_\Phi(B_0)$: 

\begin{lemma} \label{GrowthBound2} 
Let $\Phi(z) \in K(z)$ have degree $d \ge 2;$ write $B_0 = B_0(\Phi)$.  
Assume that $\Phi(0) = 0$, and that $\Phi$ has no poles in $D(0,B_0)^-$,   
so $\Phi(D(0,B_0)^-) = D(0,f_\Phi(B_0))^-$ is a disc, and         
$f_\Phi(r) = \diam_\infty\big(\Phi(\zeta_{0,r})\big)$ 
is increasing for $0 \le r \le B_0$.  
Suppose $\Phi$ has $n \ge 1$ zeros in $D(0,B_0)^-$, 
and $m \ge 0$ poles in $D(0,1)^- \backslash D(0,B_0)^-$ $($counting multiplicities$)$.   
Then   
\begin{equation*}
f_\Phi(B_0) \ \le \ \frac{B_0^{n-m}} {\GIR(\Phi)}  \ .
\end{equation*} 
\end{lemma}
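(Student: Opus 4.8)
The plan is to compare the radius $f_\Phi(B_0)$ of the image disc with the size of $|\Phi(z)|$ for $|z|$ near $1$, reading both quantities off the factorization of $\Phi$ over $K$. Write
\[
\Phi(z) \ = \ C \cdot \frac{\prod_{i=1}^{N}(z-\alpha_i)}{\prod_{j=1}^{M}(z-\beta_j)} \ , \qquad C \in K^{\times} \ ,
\]
where $\alpha_1, \dots, \alpha_N$ and $\beta_1, \dots, \beta_M$ are the zeros and poles of $\Phi$ in $K$, listed with multiplicity. By hypothesis $|\beta_j| \ge B_0$ for every $j$; exactly $n$ of the $\alpha_i$ satisfy $|\alpha_i| < B_0$, and exactly $m$ of the $\beta_j$ satisfy $B_0 \le |\beta_j| < 1$. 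For $0 < r \le 1$ put $g(r) = |C| \cdot \prod_i \max(r,|\alpha_i|) / \prod_j \max(r,|\beta_j|)$. For $z$ with $|z| = r$ lying in a residue class containing no zero or pole of $\Phi$ one has $|z - \alpha_i| = \max(r,|\alpha_i|)$ and $|z-\beta_j| = \max(r,|\beta_j|)$, so $g(r)$ is the generic value of $|\Phi(z)|$ on the sphere of radius $r$; clearly $g$ is continuous on $(0,1]$.

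First I would pin down the two endpoint values of $g$. Since $\Phi(0) = 0$, the image disc $\Phi(D(0,B_0)^-) = D(0,f_\Phi(B_0))^-$ is centered at $0$, so $\Phi(\zeta_{0,B_0}) = \zeta_{0,f_\Phi(B_0)}$ and its $\diam_\infty$ agrees with the generic value $g(B_0)$; hence $f_\Phi(B_0) = g(B_0)$. Writing $\Phi(\zeta_G) = \zeta_{a,R}$, Lemma \ref{LimitLemma} gives $\lim_{|z| \to 1^-,\, z \in K} |\Phi(z)| = \max(|a|,R)$; since this limit is attained through generic points, on which $|\Phi(z)| = g(|z|)$, continuity of $g$ forces $g(1) = \max(|a|,R)$. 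Therefore $f_\Phi(B_0) = \big( g(B_0)/g(1) \big) \cdot \max(|a|,R)$.

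Next I would expand
\[
\frac{g(B_0)}{g(1)} \ = \ \prod_{i=1}^{N} \frac{\max(B_0,|\alpha_i|)}{\max(1,|\alpha_i|)} \cdot \prod_{j=1}^{M} \frac{\max(1,|\beta_j|)}{\max(B_0,|\beta_j|)}
\]
and bound the factors one at a time. A zero-factor equals $B_0$ when $|\alpha_i| < B_0$ (there are $n$ of these), equals $|\alpha_i| \le 1$ when $B_0 \le |\alpha_i| < 1$, and equals $1$ when $|\alpha_i| \ge 1$, so the product of the zero-factors is at most $B_0^{n}$. A pole-factor equals $1/|\beta_j| \le 1/B_0$ when $B_0 \le |\beta_j| < 1$ (there are $m$ of these, since every $|\beta_j| \ge B_0$) and equals $1$ when $|\beta_j| \ge 1$, so the product of the pole-factors is at most $B_0^{-m}$. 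This gives $f_\Phi(B_0) \le B_0^{\,n-m} \cdot \max(|a|,R)$. Finally, by (\ref{diamGformula}) one has $\GIR(\Phi) = \diam_G(\zeta_{a,R}) = R / \max(1,|a|,R)^2$, so
\[
\GIR(\Phi) \cdot \max(|a|,R) \ = \ \frac{R \cdot \max(|a|,R)}{\max(1,|a|,R)^{2}} \ \le \ 1 \ ,
\]
since $R$ and $\max(|a|,R)$ are each at most $\max(1,|a|,R)$; thus $\max(|a|,R) \le 1/\GIR(\Phi)$, and combining with the previous bound yields $f_\Phi(B_0) \le B_0^{\,n-m}/\GIR(\Phi)$.

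The only delicate point is the identification $g(1) = \max(|a|,R)$ — checking that the value of $|\Phi(z)|$ computed from the factorization on a generic residue class at radius close to $1$ coincides with the quantity $\max(|a|,R)$ delivered geometrically by Lemma \ref{LimitLemma}. Beyond that, one should verify that the boundary cases $|\alpha_i|,|\beta_j| \in \{B_0, 1\}$ are placed consistently; they do not affect the counts $n$ and $m$, and they contribute zero-factors that are $\le 1$ and pole-factors that are $\ge 1$, exactly as the bound requires. Everything else is the elementary ultrametric bookkeeping above, and since only the hypotheses ``$n \ge 1$'' and ``no poles in $D(0,B_0)^-$'' are actually used, the argument also covers the cases where $\infty$ is a zero or pole of $\Phi$ and where $B_0 = 1$.
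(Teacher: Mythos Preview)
Your proof is correct and follows essentially the same route as the paper's: factor $\Phi$, read off the generic value of $|\Phi(z)|$ at the radii $B_0$ and $1$, invoke Lemma~\ref{LimitLemma} together with formula~(\ref{diamGformula}) to bound the value at radius $1$ by $1/\GIR(\Phi)$, and then compare the two. Your packaging via the function $g(r)$ and the ratio $g(B_0)/g(1)$ is exactly the paper's elimination of the constant $|C|$ between equations~(\ref{CValue21}) and~(\ref{CValue11}), just written multiplicatively rather than as two separate identities.
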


\begin{proof}   Since $\Phi(0) = 0$ and $\Phi$ has no poles in $D(0,B_0)^-$, 
for each $0 < r \le B_0$, $\Phi(D(0,r)^-)$ is a disc $D(0,f_\Phi(r))^-$; 
clearly $f_\Phi(r)$ is increasing with $r$. 

 We can write
\begin{equation} \label{FacF1}
\Phi(z) \ = \ C \cdot \frac{\prod_{i=1}^N (z-\alpha_i)}{\prod_{j=1}^M (z-\beta_j)}
\end{equation}
where $C \ne 0$ is a constant, $\alpha_1, \ldots, \alpha_N$ 
are the zeros of $\Phi$ in $K$ (listed with multiplicity),
and $\beta_1, \ldots, \beta_M$ are the poles of $\Phi$ in $K$ 
(listed with multiplicity).  Since $\deg(\Phi) = d$, $\max(N,M) = d$. 
Without loss, we can assume that $0 = |\alpha_1| \le |\alpha_2| \le \cdots \le |\alpha_N|$.

Since $\Phi$ has $n$ zeros in $D(0,B_0)^-$ and $\Phi(D(0,B_0)^-) = D(0,f_\Phi(B_0))^-$,  
(\ref{FacF1})  gives 
\begin{equation}  \label{CValue21}
f_\Phi(B_0) \ = \ \lim_{|z| \rightarrow B_0^-} |\Phi(z)| 
\ = \ |C| \cdot \frac{B_0^n \cdot \prod_{i=n+1}^N \max(B_0,|\alpha_i|)}{\prod_{j=1}^M \max(B_0,|\beta_j|)}  \ . 
\end{equation} 
Also, writing $\Phi(\zeta_G) = \zeta_{a,R}$,   
by Lemma \ref{LimitLemma} and formula (\ref{diamGformula}) we have  
\begin{equation} \label{LimF11} 
\lim_{\substack{|z| \rightarrow 1^- \\ z \in K}} |\Phi(z)| \ = \ \max(|a|,R) 
\ \le \ \frac{\max(1,|a|,R)^2}{R} \ = \ \frac{1}{\GIR(\Phi)} \ .
\end{equation} 
Using (\ref{FacF1}) to evaluate $\lim_{|z| \rightarrow 1^-}|\Phi(z)|$ in (\ref{LimF11}), we see that  
\begin{equation} \label{CValue11}
|C| \cdot \frac{\prod_{i=1}^M \max(1,|\alpha_i|)}{\prod_{j=1}^N \max(1,|\beta_j|)} \ \le \ \frac{1}{\GIR(\Phi)} \ . 
\end{equation}

Using (\ref{CValue11}) to eliminate $|C|$ in (\ref{CValue21}), 
and recalling that $\Phi$ has no poles in $B(0,B_0)^-$ and $m$ poles in $B(0,1)^- \backslash B(0,B_0)^-$, we obtain 
\begin{equation*} 
f_\Phi(B_0) \ \le \ \frac{1}{\GIR(\Phi)} \cdot \frac{B_0^n \cdot \prod_{B_0 \le |\alpha_i| < 1} |\alpha_i|}{\prod_{B_0 \le |\beta_j| < 1} |\beta_j| }
\ \le \ \frac{B_0^{n-m}}{\GIR(\Phi)} \ . 
\end{equation*} 
\end{proof}

Case 2 is covered by the following Proposition: 

\begin{proposition} \label{Case2} 
Let $\Phi(z) \in K(z)$ have degree $d \ge 1$; write $B_0 = B_0(\Phi)$, 
and put $c_0 = \Phi(0)$.  Assume $\Phi$ has no poles in $D(0,B_0)^-$, so $\Phi(D(0,B_0)^-) = D(c_0,R)^-$ 
is a disc.   Suppose $|c_0| \le 1$ and $R > 1$. 
Then the restriction of\, $\Phi$ to $[0,\zeta_{0,B_0}]$ satisfies 
\begin{equation} \label{Case2Formula} 
\Lip_\Berk\big(\Phi\vert_{[0,\zeta_{0,B_0}]}\big) \ \le \ 
\max\big( \frac{1}{\GIR(\Phi) \cdot B_0^d}, \frac{d}{\GIR(\Phi)^{1/d} \cdot B_0} \big) \ .
\end{equation} 
\end{proposition}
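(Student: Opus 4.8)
The plan is to combine Corollary~\ref{DerivCor} with a Newton--polygon analysis of $\Phi$ on $D(0,B_0)^-$, and then to invoke Lemmas~\ref{fprimeMonoLemma}, \ref{CalculusLemma} and~\ref{GrowthBound2}.

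First I would normalize. Since $|c_0| = |\Phi(0)| \le 1$, the translation $\tau(z) = z - c_0$ lies in $\GL_2(\cO)$; replacing $\Phi$ by $\tau \circ \Phi$ changes neither $\Lip_\Berk(\Phi\vert_{[0,\zeta_{0,B_0}]})$ nor $B_0(\Phi)$ nor $\GIR(\Phi)$, and it arranges $\Phi(0) = 0$ while keeping $\Phi(D(0,B_0)^-) = D(0,R)^-$ with $R = f_\Phi(B_0) > 1$. By the Weierstrass expansion recorded before this proposition, $f_\Phi(r) = \max_{1 \le k \le n} |c_k|\, r^k$ on $[0,B_0]$, where $1 \le n \le d$ is the number of zeros of $\Phi$ in $D(0,B_0)^-$, and $\Phi(\zeta_{0,r}) = \zeta_{0,f_\Phi(r)}$ for $r \in [0,B_0]$. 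Put $F_\Phi(r) = \diam_G(\Phi(\zeta_{0,r}))$. Then $F_\Phi(r) = f_\Phi(r)$ while $f_\Phi(r) \le 1$ and $F_\Phi(r) = 1/f_\Phi(r)$ while $f_\Phi(r) \ge 1$, so there is a unique $r_* \in (0,B_0)$ with $f_\Phi(r_*) = 1$, and $F_\Phi$ increases on $[0,r_*]$ and decreases on $[r_*,B_0]$. By Corollary~\ref{DerivCor}, $\Lip_\Berk(\Phi\vert_{[0,\zeta_{0,B_0}]})$ is the supremum of $|F_\Phi^\prime(r)|$ over $r \in (0,B_0)$ away from the finitely many break points.

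Next I would estimate $|F_\Phi^\prime|$ on the two branches. On $[0,r_*]$ one has $F_\Phi = f_\Phi$, and Lemma~\ref{fprimeMonoLemma} says the right derivative $f_\Phi^\prime$ is non-decreasing; hence $\sup_{[0,r_*)} |F_\Phi^\prime| = f_\Phi^\prime(r_*^-) = k_*\, f_\Phi(r_*)/r_* = k_*/r_*$, where $1 \le k_* \le n$ is the pivot exponent of $f_\Phi$ just to the left of $r_*$. On $[r_*,B_0]$ the pivot exponents are again non-decreasing (Lemma~\ref{fprimeMonoLemma}), and on the subinterval where $f_\Phi(r) = |c_k|\, r^k$ we have $|F_\Phi^\prime(r)| = k/(f_\Phi(r)\, r)$, which is decreasing, so it is largest at the left endpoint $\rho_k$ of that subinterval, where $f_\Phi(\rho_k) \ge 1$. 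Since $|c_k|\, B_0^k \le f_\Phi(B_0) = R$ while $|c_k|\, \rho_k^k = f_\Phi(\rho_k) \ge 1$, we get $\rho_k \ge B_0 R^{-1/k}$, hence $|F_\Phi^\prime(r)| \le k/\rho_k \le k R^{1/k}/B_0$ there; and similarly $k_*/r_* \le k_* R^{1/k_*}/B_0$ because $r_* \ge B_0 R^{-1/k_*}$ for the same reason. Therefore
\begin{equation*}
\Lip_\Berk\big(\Phi\vert_{[0,\zeta_{0,B_0}]}\big) \ \le \ \max_{1 \le k \le n}\, \frac{k R^{1/k}}{B_0}\, .
\end{equation*}

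Finally I would collapse the maximum and compare with $\GIR(\Phi)$. Since $R \ge 1$, the function $x \mapsto x R^{1/x}$ is convex on $(0,\infty)$, so Lemma~\ref{CalculusLemma} gives $\max_{1 \le k \le n} k R^{1/k} \le \max(R,\, d R^{1/d})$. By Lemma~\ref{GrowthBound2}, $R \le B_0^{\,n-m}/\GIR(\Phi)$, where $0 \le m \le d$ is the number of poles of $\Phi$ in $D(0,1)^- \setminus D(0,B_0)^-$. Because $n \ge 1$, $m \le d$ and $B_0 \le 1$ we have $n - m - 1 \ge -d$, so $R/B_0 \le B_0^{-d}/\GIR(\Phi) = 1/(\GIR(\Phi)\,B_0^d)$. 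For the term $d R^{1/d}/B_0$ one argues by cases: if $R \le 1/\GIR(\Phi)$ then $d R^{1/d}/B_0 \le d/(\GIR(\Phi)^{1/d} B_0)$; otherwise $R > 1/\GIR(\Phi)$ combined with $R \le B_0^{\,n-m}/\GIR(\Phi)$ forces $B_0^{\,n-m} > 1$, hence $n < m$ and $n \le d-1$, and then a short computation using $1 - d \le n - m \le -1$ together with (in the only nontrivial subcase) the sharper form $\max_{1\le k\le n}kR^{1/k} = n R^{1/n}$ and the consequent bound $R \le n^{n/(n-1)}$ shows $d R^{1/d}/B_0 \le \max\big(1/(\GIR(\Phi)\,B_0^d),\, d/(\GIR(\Phi)^{1/d} B_0)\big)$. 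Combining these estimates yields (\ref{Case2Formula}).

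The step I expect to be the main obstacle is this last comparison: the inequality $R \le B_0^{\,n-m}/\GIR(\Phi)$ and the calculus bound for $\max_k k R^{1/k}$ each become weak in a different parameter regime, so one must keep track of the integers $n$ and $m$ --- in particular that the pivot of $f_\Phi$ at $B_0$ equals $n$, and that $n \le d-1$ whenever $n < m$ --- in order to see that their combination never exceeds the right-hand side of (\ref{Case2Formula}). By contrast, once the piecewise power-law description of $f_\Phi$ (with non-decreasing exponents, via Lemma~\ref{fprimeMonoLemma}) is in hand, the estimates on the two branches are routine.
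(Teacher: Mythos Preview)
Your approach is essentially the paper's: normalize so $\Phi(0)=0$, use the piecewise power-law structure of $f_\Phi$ (Lemma~\ref{fprimeMonoLemma}) to see that $F_\Phi = f_\Phi$ on $[0,r_*]$ and $F_\Phi = 1/f_\Phi$ on $[r_*,B_0]$, bound $|F_\Phi'|$ on each piece, and arrive at
\[
\Lip_\Berk\big(\Phi\vert_{[0,\zeta_{0,B_0}]}\big)\ \le\ B_0^{-1}\,\max_{1\le k\le n} k\,R^{1/k}\,,
\]
then feed in Lemma~\ref{GrowthBound2} and Lemma~\ref{CalculusLemma}. The paper does exactly this; its equation~(\ref{FprimeBasicIneq2}) is your displayed bound with $R$ replaced by the upper bound $B_0^{n-m}/\GIR(\Phi)$.

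The only place you diverge from the paper is the final bookkeeping, and here your route is unnecessarily indirect. The paper substitutes $R \le B_0^{\,n-m}/\GIR(\Phi)$ \emph{before} optimizing, obtaining
\[
\Lip \ \le\ B_0^{-1}\max_{1\le k\le n} k\,\Big(\frac{B_0^{\,n-m}}{\GIR(\Phi)}\Big)^{1/k},
\]
and then maximizes cleanly over the integer triple $(k,n,m)$ by two nested applications of Lemma~\ref{CalculusLemma}: first in $k$ over $[1,n]$, then in $n$ over $[1,d]$, and finally in $m$ over $[0,d]$. By contrast you first extend $k$ to $[1,d]$ (legitimate by convexity) and then case-split on whether $R\le 1/\GIR(\Phi)$; in the remaining case you have to drop back to the sharper range $1\le k\le n$ and invoke Lemma~\ref{GrowthBound2} anyway. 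The ``short computation'' you allude to is in fact nothing more than one further application of Lemma~\ref{CalculusLemma} to $x\mapsto x\,(B_0^{-m}/\GIR(\Phi))^{1/x}$ on $[1,d]$ after substituting $R\le B_0^{\,n-m}/\GIR(\Phi)$; the auxiliary inequality $R\le n^{n/(n-1)}$ plays no role and can be dropped. So your argument is correct, but the endgame simplifies considerably if you follow the paper's order of operations.
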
 

\begin{proof}  Since  $\gamma_{c_0}(z) := z-c_0 \in \GL_2(\cO)$, 
after replacing $\Phi(z)$ with $\gamma_{c_0} \circ \Phi(z) = \Phi(z) - c_0$,
we can assume that $\Phi(0) = 0$.   
By the Weierstrass Preparation theorem, we can expand $\Phi(z)$  in $D(0,B_0)^-$ as 
\begin{equation*}
\Phi(z) \ = \ (c_1 z + \cdots + c_n z^n) \cdot U(z)  
\end{equation*}
where $U(z)$ is a unit power series.  Hence  
\begin{equation*}
f_\Phi(r) \ = \ \max_{1 \le k \le n}( |c_k| r^k) \ , \qquad 
F_\Phi(r) \ = \ \left\{ \begin{array}{ll} f_\Phi(r) & \text{if $f_\Phi(r) \le 1$ \ , } \\
                                         1/f_\Phi(r) & \text{if $f_\Phi(r) \ge 1$ \ .}
                       \end{array} \right.
\end{equation*}
By Lemma \ref{fprimeMonoLemma}, there is a partition $0 = r_1 < \cdots < r_{\ell+1} = B_0$ of $[0,B_0]$
such that for each subinterval $[r_i,r_{i+1}]$ there is an index $k(i)$ for which $f_\Phi(r) = |c_{k(i)}| r^{k(i)}$.
After inserting an extra partition point if necessary, we can assume there is an $i_0$ 
for which $f_\Phi(r_{i_0}) = 1$.   
The right-derivative $f_\Phi^\prime(r)$ is non-decreasing on $[0,B_0]$,
and $k(i-1) \le k(i)$ for each $i =2, \ldots, \ell$.

We will now bound the absolute value of the right-derivative $F_\Phi^{\prime}(r)$ on $[0,B_0)$. 
Since $F_\Phi(r) = f_\Phi(r)$ on $[0,r_{i_0})$,  
\begin{equation*}
\sup_{r \in [0,r_{i_0})} |F_\Phi^\prime(r)| 
\ = \ \lim_{r \rightarrow r_{i_0}^-} f_\Phi^{\prime}(r) \ \le \ f_\Phi^\prime(r_{i_0})
\ = \ \frac{k(i_0)}{r_{i_0}} \ .   
\end{equation*}
For each $i \ge i_0$, on the interval $[r_i,r_{i+1})$  
\begin{equation*}
|F_\Phi^\prime(r)| \ = \ \left|-\frac{f_\Phi^{\prime}(r)}{f_\Phi(r)^2}\right| 
\ = \ \frac{k(i)}{r} \cdot \frac{1}{f_\Phi(r)} 
\ \le \ \frac{k(i)}{r_i \cdot f_\Phi(r_i)} \ \le \ \frac{k(i)}{r_i}\ . 
\end{equation*}  
Thus by Corollary \ref{DerivCor}, 
\begin{eqnarray} 
\Lip_\Berk(\Phi\vert_{[0,B_0]}) \ = \ \sup_{r \in [0,B_0)} |F_\Phi^{\prime}(r)| 
& = & \max_{i_0 \le i \le \ell} \ \frac{k(i)}{r_i f_\Phi(r_i)} \label{FprimeBasicIneq1} \\
& \le & \max_{i_0 \le i \le \ell} \frac{k(i)}{r_i} \ . \label{FprimeBasicIneq2}
\end{eqnarray} 
Here the second and third expressions in (\ref{FprimeBasicIneq1}) are equal 
by the right-continuity of $f_\Phi^{\prime}(r)$. 

For each $k = 1, \ldots, n$ with $c_k \ne 0$, let $u_k > 0$ be the unique solution to $|c_k| r^k = 1$.  
For brevity, write $G_0 = \GIR(\Phi)$. 
By the nonarchimedean Maximum Modulus principle and Lemma \ref{GrowthBound2}, 
we have $|c_k| B_0^k \le f_\Phi(B_0) \le B_0^{n-m}/G_0$,
where $n$ is the number of zeros of $\Phi$ in $D(0,B_0)^-$ and $m$ is the number of poles
of $\Phi$ in $D(0,1)^- \backslash D(0,B_0)^-$.  Hence $|c_k| \le B_0^{n-m-k}/G_0$, so 
$1 = |c_k| (u_k)^k \le B_0^{n-m-k} \cdot (u_k)^k/G_0$, and
\begin{equation} \label{uIneq}
\frac{1}{u_k} \ \le \ B_0^{-1} \cdot \Big( \frac{B_0^{n-m}}{G_0} \Big)^{1/k} \ .
\end{equation} 
For each $i \ge i_0$, we have $u_{k(i)} \le r_i$.   
Using  Corollary \ref{DerivCor} and (\ref{FprimeBasicIneq2}), (\ref{uIneq}), we see that 
\begin{eqnarray}
\Lip_\Berk\big(\Phi\vert_{[0,\zeta_{0,B_0}]}\big) & = & \sup_{r \in [0,B_0)} |F_\Phi^{\prime}(r)| 
\ \le \ \max_{i_0 \le i \le \ell} \frac{k(i)}{r_i}  
\ \le \ \max_{i_0 \le i \le \ell} k(i) \cdot B_0^{-1} \cdot \Big( \frac{B_0^{n-m}}{G_0} \Big)^{1/k(i)}  \notag \\
& \le &  B_0^{-1} \cdot\max_{1 \le k \le n} k \cdot \Big( \frac{B_0^{n-m}}{G_0} \Big)^{1/k} \ . 
\label{fIntermediate}
\end{eqnarray} 

However, we want a bound for $\Lip_\Berk\big(\Phi\vert_{[0,\zeta_{0,B_0}]}\big)$ independent of $n$ and $m$.
By the discussion above $1 \le k \le n \le d$ and $0 \le m \le d$.  
We need only consider pairs $(n,m)$ for which $B_0^{n-m}/G_0 > 1$, 
since by assumption $1 < f_\Phi(B_0)$ and Lemma \ref{GrowthBound2} gives $f_\Phi(B_0) \le B_0^{n-m}/G_0$.
Letting $(k,n,m)$ range over all 
triples of integers satisfying these conditions 
we see that 
\begin{equation} \label{kmnFormula}
\Lip_\Berk\big(\Phi\vert_{[0,\zeta_{0,B_0}]}\big) 
\ \le \ \max_{\substack{ 0 \le m \le d \\ 1 \le n \le d \\ B_0^{n-m}/G_0 > 1}} 
 \Big( \ B_0^{-1} \cdot \max_{1 \le k \le n} k \cdot \Big( \frac{B_0^{n-m}}{G_0} \Big)^{1/k} \ \Big) \ .
\end{equation}

We will now bound the right side of (\ref{kmnFormula}).
Fixing $n$ and $m$ with $B_0^{n-m}/G_0 > 1$, and taking $H = B_0^{n-m}/G_0$ in Lemma \ref{CalculusLemma},
shows that 
\begin{equation*}
B_0^{-1} \cdot \max_{1 \le k \le n} k \cdot \Big( \frac{B_0^{n-m}}{G_0} \Big)^{1/k} 
\ = \ \max \Big( \ \frac{B_0^{n-m-1}}{G_0},  n \Big(\frac{B_0^{-m}}{G_0} \Big)^{1/n} \ \Big) \ .
\end{equation*}
Inserting this in (\ref{kmnFormula}), interchanging the order of the maxima, and dropping the condition
$B_0^{n-m}/G_0 > 1$ gives 
\begin{equation} \label{kmnFormula2}
\Lip_\Berk\big(\Phi\vert_{[0,\zeta_{0,B_0}]}\big) \ \le \ 
\max \Big( \ \max_{\substack{ 0 \le m \le d \\ 1 \le n \le d}} \frac{B_0^{n-m-1}}{G_0} , 
\max_{\substack{ 0 \le m \le d \\ 1 \le n \le d }} 
                n \Big(\frac{B_0^{-m}}{G_0} \Big)^{1/n} \ \Big)  \ .
\end{equation}
The first inner maximum in (\ref{kmnFormula2}) is $B_0^{-d}/G_0$, achieved when $n = 1$ and $m = d$.  
For the second inner maximum, fixing $m$ and taking $H = B_0^{-m}/G_0$ in Lemma \ref{CalculusLemma} gives 
\begin{equation*}
\max_{ 1 \le n \le d } n \Big(\frac{B_0^{-m}}{G_0} \Big)^{1/n} 
 \ = \ \max \Big( \frac{B_0^{-m}}{G_0}\ , d \Big(\frac{B_0^{-m}}{G_0} \Big)^{1/d} \Big)
\end{equation*} 
The maximum of this for $0 \le m \le d$ is attained when $m = d$, and is 
\begin{equation*}
\max \Big( \frac{1}{ G_0 \cdot B_0^d} \ , \frac{d}{G_0^{1/d} \cdot B_0}  \Big)
\end{equation*}
Combining these results gives (\ref{Case2Formula}).
\end{proof}
 
Case 3 is covered by the following result:

\begin{proposition} \label{Case3}
Let $\Phi(z) \in K(z)$ have degree $d \ge 1$; write $B_0 = B_0(\Phi)$, 
put $c_0 = \Phi(0)$, and assume that $\Phi(D(0,B_0)^-) = D(c_0,R)^-$.  
Suppose $|c_0| > 1$ and $R \le |c_0|$. 
Then the restriction of $\Phi$ to $[0,\zeta_{0,B_0}]$ satisfies 
\begin{equation*}
\Lip_\Berk\big(\Phi\vert_{[0,\zeta_{0,B_0}]}\big) \ \le \ \frac{d}{B_0} \ .
\end{equation*} 
\end{proposition}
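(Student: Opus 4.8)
The plan is to follow the argument of Proposition~\ref{Case1}, but with $\diam_G$ playing the role that $\diam_\infty$ played there. Since $|c_0| > 1$ the translation $z \mapsto z - c_0$ no longer lies in $\GL_2(\cO)$, so I keep $\Phi$ as given and use the data already set up before Proposition~\ref{Case1}: Weierstrass Preparation writes $\Phi(z) = c_0 + (c_1 z + \cdots + c_n z^n) U(z)$ on $D(0,B_0)^-$ with $1 \le n \le d$ and $U$ a unit power series, and (\ref{MapFormula}) gives $\Phi(\zeta_{0,r}) = \zeta_{c_0, f_\Phi(r)}$ for all $r \in [0,B_0]$, where $f_\Phi(r) = \max_{1 \le k \le n} |c_k| r^k$ is continuous and increasing with $f_\Phi(B_0) = R$. (That $\Phi$ has no poles in $D(0,B_0)^-$, needed for the Newton-polygon description, is immediate since $\Phi(D(0,B_0)^-) = D(c_0,R)^-$ omits $\infty$.)

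The first real step is to compute the diameter of the image. Since $|c_0| > 1$ and $0 \le f_\Phi(r) \le R \le |c_0|$ for every $r \in [0,B_0]$, formula (\ref{diamGformula}) gives
\[
\diam_G\big(\Phi(\zeta_{0,r})\big) \ = \ \frac{f_\Phi(r)}{\max(1,|c_0|,f_\Phi(r))^2} \ = \ \frac{f_\Phi(r)}{|c_0|^2} \ =: \ F_\Phi(r) \ ,
\]
for all $r \in [0,B_0]$ (the endpoint $r = B_0$ with $R = |c_0|$, where $\zeta_{c_0,|c_0|} = \zeta_{0,|c_0|}$, is covered by the same formula), whereas $\diam_G(\zeta_{0,r}) = r$ because $r \le B_0 < 1$. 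Partitioning $[0,B_0]$ as in Lemma~\ref{fprimeMonoLemma} into pieces on which $f_\Phi(r) = |c_{k(i)}| r^{k(i)}$, that lemma tells us the right-derivative $f_\Phi'(r)$ is non-decreasing on $[0,B_0)$, hence so is $F_\Phi'(r) = f_\Phi'(r)/|c_0|^2$; in particular $F_\Phi$ is increasing, so Corollary~\ref{DerivCor} applies (the image path $[c_0,\zeta_{c_0,R}]$ is radial since $R \le |c_0|$ puts it inside $[c_0,\zeta_G]$) with all $k_i = k(i) > 0$, giving
\[
\Lip_\Berk\big(\Phi\vert_{[0,\zeta_{0,B_0}]}\big) \ = \ \sup_{r \in (0,B_0)} F_\Phi'(r) \ = \ \lim_{r \to B_0^-} \frac{f_\Phi'(r)}{|c_0|^2} \ = \ \frac{k(\ell) \cdot f_\Phi(B_0)}{|c_0|^2 \, B_0} \ = \ \frac{k(\ell) \cdot R}{|c_0|^2 \, B_0} \ ,
\]
exactly as in (\ref{fprimeLimit1}).

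To finish I would use $k(\ell) \le n \le d$ together with $R \le |c_0|$ and $|c_0| > 1$:
\[
\Lip_\Berk\big(\Phi\vert_{[0,\zeta_{0,B_0}]}\big) \ \le \ \frac{d\,R}{|c_0|^2 \, B_0} \ \le \ \frac{d}{|c_0|\,B_0} \ \le \ \frac{d}{B_0} \ .
\]
I do not expect a genuine obstacle: the entire content is that, in the regime $|c_0| > 1$ and $R \le |c_0|$, the map $r \mapsto \diam_G(\Phi(\zeta_{0,r}))$ equals $f_\Phi(r)/|c_0|^2$, i.e.\ a constant factor $1/|c_0|^2 < 1$ times the increasing function $f_\Phi$, which only improves the bound relative to Case~1. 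The one point deserving care is verifying that the single formula $f_\Phi(r)/|c_0|^2$ is valid over the whole closed interval $[0,B_0]$ — in particular at $r = B_0$ when $R = |c_0|$, where $\zeta_{c_0,|c_0|}$ collapses to $\zeta_{0,|c_0|}$ and one must confirm the values agree (they do, by (\ref{diamGformula})).
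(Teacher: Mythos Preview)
Your proof is correct and follows essentially the same route as the paper: partition $[0,B_0]$ via Lemma~\ref{fprimeMonoLemma}, observe that in this regime $F_\Phi(r)=\diam_G(\Phi(\zeta_{0,r}))=f_\Phi(r)/|c_0|^2$, use monotonicity of $f_\Phi'$ to evaluate the supremum at $r\to B_0^-$, and finish with $k(\ell)\le d$ and $R\le|c_0|$. (In fact the paper writes $F_\Phi(r)=f_\Phi(r)/|c_0|$, which appears to be a slip; your factor $1/|c_0|^2$ is the one given by~(\ref{diamGformula}), and either version yields the stated bound.)
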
 

\begin{proof} 
Partition $[0,B_0]$ by taking $0 =  r_1 < \cdots < r_{\ell+1} = B_0$ so that  
\begin{equation*}
f_\Phi(r) \ = \ f_i(r) \ := \ |c_{k(i)}| \cdot r^{k(i)} 
\end{equation*}
on $[r_i,r_{i+1})$. Write $f_\Phi^{\prime}(r)$ for the right-derivative of $f_\Phi(r)$.  
Just as in Proposition \ref{Case1}, 
\begin{equation} \label{fprimeLimit2}
\sup_{r \in [0,B_0)} f_\Phi^\prime(r) \ = \ \lim_{r \rightarrow B_0^-} f_\ell^\prime(r) 
\ = \ k(\ell) \cdot |c_{k(\ell)}| \cdot B_0^{k(\ell) - 1} \ = \ \frac{k(\ell) \cdot f_\Phi(B_0)}{B_0} \ .
\end{equation}   
 
Since $|c_0| > 1$ and $f_\Phi(B_0) \le |c_0|$, we have $\Phi(D(0,B_0)^-) \subset D(c_0,|c_0|)^-$,
hence $F_\Phi(r) = f_\Phi(r)/|c_0|$ and  $F_\Phi^{\prime}(r) = f_\Phi^{\prime}(r)/|c_0|$ for all  $r \in [0,B_0)$.   
Using that $k(\ell) \le n \le d$ and $f_\Phi(B_0) \le |c_0|$ we conclude 
from (\ref{fprimeLimit2}) that 
\begin{equation*} 
\Lip_\Berk\big(\Phi\vert_{[0,B_0)}\big) \ = \ \sup_{r \in [0,B_0)} F_\Phi^\prime(r)\ \le \ \frac{d}{B_0} \ .
\end{equation*} 
\end{proof} 

Case 4 reduces to Case 2 by a trick:  

\begin{proposition} \label{Case4}
Let $\Phi(z) \in K(z)$ have degree $d \ge 1$; write $B_0 = B_0(\Phi)$, 
put $c_0 = \Phi(0)$, and assume $\Phi(D(0,B_0)^-) = D(c_0,R)^-$.  
Suppose $|c_0| > 1$ and $R > |c_0|$.     
Then the restriction of\, $\Phi$ to $[0,\zeta_{0,B_0}]$ satisfies 
\begin{equation*}
\Lip_\Berk\big(\Phi\vert_{[0,\zeta_{0,B_0}]}\big) \ \le \ 
\max\big( \frac{1}{\GIR(\Phi) \cdot B_0^d}, \frac{d}{\GIR(\Phi)^{1/d} \cdot B_0} \big) \ .
\end{equation*} 
\end{proposition}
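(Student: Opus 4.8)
The plan is to reduce Proposition \ref{Case4} to Proposition \ref{Case2} (Case 2) by rescaling the target by $1/c_0$. Since $R > |c_0| > 1$ we have $D(c_0,R)^- = D(0,R)^-$, so in particular $\Phi$ has no poles in $D(0,B_0)^-$. Put $\Psi = \gamma \circ \Phi$, where $\gamma(z) = z/c_0 \in \GL_2(K)$. Then $\deg(\Psi) = d$; $\Psi$ has the same poles as $\Phi$, hence none in $D(0,B_0)^-$; $\Psi(0) = 1$, so $|\Psi(0)| = 1 \le 1$; and $\Psi(D(0,B_0)^-) = D(1,R/|c_0|)^- = D(0,R/|c_0|)^-$ with $R/|c_0| > 1$. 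Moreover $B_0(\Psi) = B_0(\Phi) = B_0$, since $B_0(\cdot)$ is invariant under post-composition by $\GL_2(K)$. Thus $\Psi$ satisfies the hypotheses of Proposition \ref{Case2}, which yields
\[
\Lip_\Berk\big(\Psi\vert_{[0,\zeta_{0,B_0}]}\big) \ \le \ \max\Big( \frac{1}{\GIR(\Psi) \cdot B_0^d} \, , \ \frac{d}{\GIR(\Psi)^{1/d} \cdot B_0} \Big) \ .
\]

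I would then compare the two sides. First, on $[0,\zeta_{0,B_0}]$ one has $\diam_G\big(\Psi(\zeta_{0,r})\big) = |c_0| \cdot \diam_G\big(\Phi(\zeta_{0,r})\big)$ for every $r \in [0,B_0]$: writing $t = f_\Phi(r)$, if $t < |c_0|$ then by (\ref{diamGformula}) the two sides equal $t/|c_0|$ and $|c_0| \cdot t/|c_0|^2$, while if $t \ge |c_0|$ then $\Phi(\zeta_{0,r}) = \zeta_{0,t}$, $\Psi(\zeta_{0,r}) = \zeta_{0,t/|c_0|}$, and both sides equal $|c_0|/t$. Applying Corollary \ref{DerivCor} to $\Phi$ and to $\Psi$ on $[0,\zeta_{0,B_0}]$ with a common partition (refined so that the point at which $f_\Phi$ reaches $|c_0|$ is a break point), the functions $F_{\varphi,i}$ there satisfy $F_{\Psi,i} = |c_0| \cdot F_{\Phi,i}$, so $\Lip_\Berk\big(\Psi\vert_{[0,\zeta_{0,B_0}]}\big) = |c_0| \cdot \Lip_\Berk\big(\Phi\vert_{[0,\zeta_{0,B_0}]}\big)$. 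Second, since $\Phi$ preserves the type of $\zeta_G$ we may write $\Phi(\zeta_G) = \zeta_{a,S}$; then $\Psi(\zeta_G) = \zeta_{a/c_0,\, S/|c_0|}$, and because $|c_0| > 1$, formula (\ref{diamGformula}) gives $\max(1,|a|/|c_0|,S/|c_0|) \le \max(1,|a|,S)$, hence $\GIR(\Psi) \ge \GIR(\Phi)/|c_0|$.

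Combining these, $\Lip_\Berk\big(\Phi\vert_{[0,\zeta_{0,B_0}]}\big) = \frac{1}{|c_0|}\Lip_\Berk\big(\Psi\vert_{[0,\zeta_{0,B_0}]}\big) \le \max\big( \frac{1}{|c_0|\GIR(\Psi) B_0^d}, \frac{d}{|c_0|\GIR(\Psi)^{1/d} B_0} \big)$; and from $\GIR(\Psi) \ge \GIR(\Phi)/|c_0|$ and $|c_0| > 1$ one gets $|c_0|\GIR(\Psi) \ge \GIR(\Phi)$ and $|c_0|\GIR(\Psi)^{1/d} \ge |c_0|^{1-1/d}\GIR(\Phi)^{1/d} \ge \GIR(\Phi)^{1/d}$ (here $1 - 1/d \ge 0$ as $d \ge 1$), so the bound is at most $\max\big( \frac{1}{\GIR(\Phi) B_0^d}, \frac{d}{\GIR(\Phi)^{1/d} B_0} \big)$, which is the assertion of Proposition \ref{Case4}. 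The point that needs care, I expect, is the identity $\Lip_\Berk(\Psi\vert_{[0,\zeta_{0,B_0}]}) = |c_0|\,\Lip_\Berk(\Phi\vert_{[0,\zeta_{0,B_0}]})$: the scaling $\gamma$ is not an isometry for $d$ (since $|c_0| \ne 1$), so one must extract this from the $\diam_G$ formulas, the key feature being that the image segment $[c_0,\zeta_{0,R}]$ passes through $\zeta_G$ only after applying $\gamma$. It should also be checked that $\Psi$ genuinely falls under Case 2 rather than Cases 1, 3, or 4 — which it does, since $|\Psi(0)| \le 1$ and the radius $R/|c_0|$ exceeds $1$.
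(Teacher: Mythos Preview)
Your argument is correct and takes a genuinely different route from the paper's. The paper reduces to Case~2 by \emph{pre}-composing with a translation $\gamma_\alpha(z) = z + \alpha \in \GL_2(\cO)$, where $\alpha \in D(0,B_0)^-$ is a zero of $\Phi$ (such a zero exists since $D(c_0,R)^- = D(0,R)^- \ni 0$). Because $\gamma_\alpha \in \GL_2(\cO)$, the invariants $\GIR$ and $B_0$ are preserved exactly; but the two Lipschitz constants $\Lip_\Berk(\Phi|_{[0,\zeta_{0,B_0}]})$ and $\Lip_\Berk(\Psi|_{[0,\zeta_{0,B_0}]})$ are not simply related, and the paper compares them by reaching into the explicit formula (\ref{FprimeBasicIneq1}) from the proof of Proposition~\ref{Case2}, showing that the maximum for $\Phi$ runs over a subset of the indices used for $\Psi$.

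Your approach instead \emph{post}-composes with the scaling $z \mapsto z/c_0 \in \GL_2(K) \setminus \GL_2(\cO)$. You give up exact control of $\GIR$, keeping only $\GIR(\Psi) \ge \GIR(\Phi)/|c_0|$, but in exchange you obtain the clean identity $\Lip_\Berk(\Psi|_I) = |c_0|\cdot \Lip_\Berk(\Phi|_I)$ on the common domain segment. This lets you treat Proposition~\ref{Case2} as a black box rather than unpacking its proof, and the factor $|c_0|^{1-1/d} \ge 1$ absorbs the slack at the end. The only point worth stating a bit more explicitly is that your $\diam_G$ scaling identity holds for \emph{every} $t = f_\Phi(r) > 0$ (both regimes $t \le |c_0|$ and $t > |c_0|$ give the same factor $|c_0|$), so that on a common partition refined at the point where $f_\Phi$ hits $|c_0|$, Corollary~\ref{DerivCor} immediately yields $|F_{\Psi,i}'| = |c_0|\,|F_{\Phi,i}'|$ on each piece.
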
 

\begin{proof}  
Since $f_\Phi(r)$ is continuous and monotonic,
with $f_\Phi(0) = 0$ and $f_\Phi(B_0) > |c_0| > 1$, there is a unique $0 < R < B_0$ with $f_\Phi(R) = |c_0|$.  For this $R$,
the theory of Newton polygons shows that $\Phi(D(0,R)) = D(c_0,|c_0|) = D(0,|c_0|)$,  so there is an $\alpha \in D(0,R)$
for which $\Phi(\alpha) = 0$.  Write $\gamma_\alpha(z) = z + \alpha \in \GL_2(\cO)$, and put 
\begin{equation*}
\Psi(z) \ = \ \Phi(z+\alpha) \ = \ (\Phi \circ \gamma_\alpha)(z) \ .
\end{equation*} 
By construction $\Psi(0) = 0$,   
so $\Psi$ satisfies the conditions of Proposition \ref{Case2}.  
Since $\gamma_\alpha(z) \in \GL_2(\cO)$ we have $\GIR(\Psi) = \GIR(\Phi)$ and $B_0(\Psi) = B_0(\Phi) = B_0$.  

We will prove Proposition \ref{Case4} by showing that  $\Lip_\Berk(\Phi_{[0,B_0]}) \le \Lip_\Berk(\Psi_{[0,B_0]})$ 
and applying Proposition \ref{Case2} to $\Psi$. 

For each $r$ with $R < r < B_0$,
we have $\Psi(\cD(0,r)^-) = \Phi(\cD(0,r)^-)$, so  $f_\Psi(r) = f_\Phi(r)$ for $R \le r \le B_0$. 
As usual, we can write $\Phi(z) = c_0 + (c_1 z + \cdots + c_n z^n) \cdot U(z)$, where $U(z)$
is a unit power series converging on $\cD(0,B_0)^-$;  then 
\begin{equation*}
f_\Phi(r) \ = \ \max_{1 \le k \le n} |c_k| r^k 
\end{equation*}  
for each $r \in [0,B_0]$.  
Likewise we can write $\Psi(z) = (C_1 z + \cdots + C_N z^N) \cdot W(z)$, where $W(z)$
is a unit power series converging on $D(0,B_0)^-$;  and 
\begin{equation*}
f_\Psi(r) \ = \ \max_{1 \le k \le N} |C_k| r^k
\end{equation*}  
for each $r \in [0,B_0]$.  

Partition $[0,B_0]$ simultaneously for $\Phi$ and $\Psi$, 
choosing $0 =  r_1 < \cdots < r_{\ell+1} = B_0$ so that for each $i = 1, \ldots, \ell$ there are indices $1 \le j(i) \le n$,
$1 \le k(i) \le N$  such that on $[r_i,r_{i+1})$ we have  
\begin{equation*}
f_\Phi(r) \ = \ |c_{j(i)}| \cdot r^{j(i)}  \ , \qquad f_\Psi(r) \ = \ |C_{k(i)}| \cdot r^{k(i)} \ .
\end{equation*}
After refining the partition if necessary, we can assume there are indices $i_0$, $i_1$ such that 
$f_\Psi(r_{i_0}) = 1$ and $f_\Psi(r_{i_1}) = f_\Phi(r_{i_1}) = |c_0|$ (evidently $r_{i_1} = R$).  
Clearly $i_0 < i_1$, since $f_\Psi$ is monotonic.  

We claim that $j(i) = k(i)$ for $i = i_1, \cdots, \ell$.  
To see this, note first that for each $r$ with $r_{i_1} \le r \le B_0$, we have $f_\Phi(r) = f_\Psi(r)$. 
For each $r \in |K^{\times}|$ with $r_i < r < r_{i+1}$, 
and each $w \in K$ with $|w| \le f_{\Psi}(r)$, the theory of Newton polygons shows that $\Phi(z) = w$ has 
$j(i)$ solutions in $D(0,r)$, counting multiplicities. Similarly $\Psi(z) = w$ has $k(i)$  solutions in $D(0,r)$
counting multiplicities.  But $\Phi(z) = w$ if and only if $\Psi(z-\alpha) = w$. 
Since $|\alpha| = r_{i_1} \le r$, we have $|z - \alpha| \le r$ if and only if $|z| \le r$.
Hence $j(i) = k(i)$.  
 
We have   
\begin{equation*}
F_\Phi(r) \ = \ \left\{ \begin{array}{ll} f_\Phi(r)/|c_0|^2 & \text{if $r \in [0,r_{i_1})$ \ , } \\
                                         1/f_\Phi(r) & \text{if $r \in [r_{i_1},B_0)$ \ ,}
                       \end{array} \right.
\end{equation*}
and 
\begin{equation*}
F_\Psi(r) \ = \ \left\{ \begin{array}{ll} f_\Psi(r) & \text{if $r \in [0,r_{i_0})$ \ , } \\
                                         1/f_\Psi(r) & \text{if $r \in [r_{i_0},B_0)$ \ .}
                       \end{array} \right.
\end{equation*}
Write $f_\Phi^{\prime}(r)$ for the right-derivative of $f_\Phi(r)$ on $[0,B_0)$, 
and $F_\Phi^{\prime}(r)$ for the right-derivative of $F_\Phi(r)$.
Noting that $|c_0| = f_\Phi(r_{i_1}) = |c_{k(i_1)}| (r_{i_1})^{j(i_1)}$  
and recalling from Lemma \ref{fprimeMonoLemma} that $f_\Phi^{\prime}(r)$ is non-decreasing with $r$, we see that 
\begin{equation*} 
\sup_{r \in [0,r_{i_1})} |F_\Phi^{\prime}(r)| \ \le \ \frac{f_\Phi^{\prime}(r_{i_1})}{|c_0|^2} 
\ = \ \frac{k(i_1) |c_{j(i_1)}| (r_{i_1})^{j(i_1)-1}}{|c_0|^2} \ = \  \frac{j(i_1)}{r_{i_1}f_\Phi(r_{i_1})} \ . 
\end{equation*}  
For each $i = i_1, \cdots, \ell$, on $[r_i,r_{i+1})$ we have $F_\Phi^{\prime}(r) = -f_\varphi^{\prime}(r)/(f_\varphi(r))^2$, so 
\begin{equation*}
\sup_{r \in [r_i,r_{i+1})} |F_\Phi^{\prime}(r)| \ = 
|F_\Phi^{\prime}(r_i)| \ = \ \left| \frac{-f_\Phi^{\prime}(r_i)} {(f_{\Phi}(r_i))^2} \right|
\ = \ \frac{j(i)}{r_i f_\Phi(r_i)} \ .
\end{equation*}
Thus
\begin{equation*}
\Lip_\Berk(\Phi\vert_{[0,B_0]}) \ = \ \sup_{r \in [0,B_0)} |F_\Phi^{\prime}(r)| 
\ = \ \max_{i_1 \le i \le \ell} \Big( \frac{j(i)}{r_i f_\Phi(r_i)} \Big) \ .
\end{equation*} 
Since $i_0 \le i_1$, and $k(i) = j(i)$ and $f_\Phi(r_i) = f_\Psi(r_i)$ for each $i \ge i_1$, 
by applying (\ref{FprimeBasicIneq1}) with $\Phi$ replaced by $\Psi$ 
and then using the bound for $\Lip_\Berk(\Psi\vert_{[0,B_0]})$ from Proposition \ref{Case2}, we get 
\begin{eqnarray*} 
\Lip_\Berk(\Phi|_{[0,B_0]})
& = & \max_{i_1 \le i \le \ell} \Big( \frac{j(i)}{r_i f_\Phi(r_i)} \Big) 
\ \le \ \max_{i_0 \le i \le \ell} \Big( \frac{k(i)}{r_i f_\Psi(r_i)} \Big) \\
& = & \Lip_\Berk(\Psi|_{[0,B_0]})  
\ \le \ \max\Big(\ \frac{1}{\GIR(\Phi) \cdot B_0^d}\ , \frac{d}{\GIR(\Phi)^{1/d} \cdot B_0} \ \Big) \ .
\end{eqnarray*}
\end{proof} 

Case 5 (the central ball) is dealt with by the following Proposition:

\begin{proposition} \label{Case5}
Let $\Phi(z) \in K(z)$ have degree $d \ge 1$; write $B_0 = B_0(\Phi)$. 
Then 
\begin{equation*}
\Lip_\Berk\big(\Phi\vert_{[\zeta_{0,B_0},\zeta_G]}\big) \ \le \ \frac{d}{B_0} \ .
\end{equation*} 
\end{proposition}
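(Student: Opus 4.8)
The plan is to treat this, the simplest of the five cases, directly via Corollary \ref{DerivCor}, exploiting the fact that on the ``central ball'' the image diameter is automatically at most $1$. First I would dispose of the trivial situation $B_0 = 1$ (which holds whenever $d = 1$, by Proposition \ref{B_0RationalityProp}): then $[\zeta_{0,B_0},\zeta_G]$ is the single point $\zeta_G$, Corollary \ref{DerivCor} does not apply but there is nothing to prove. So assume $B_0 < 1$ and set $I = [\zeta_{0,B_0},\zeta_G]$.

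Next I would record the geometry of $I$: its points are exactly the $\zeta_{0,r}$ with $B_0 \le r \le 1$, and by (\ref{diam_G1}) one has $\diam_G(\zeta_{0,r}) = r$ throughout this range; in particular $I$ is radial, and it is $B_0$-limited since $\diam_G(\zeta_{0,B_0}) = B_0$. Applying Lemma \ref{DecompositionLemma} to $I$ produces a partition $\{a_1,\dots,a_{n+1}\}$ together with, for each $i$, a constant $C_i > 0$ and an integer $k_i = \pm\delta_i$ with $1 \le \delta_i \le d$ such that $\diam_G(\Phi(\zeta_{0,r})) = C_i r^{k_i}$ for $\zeta_{0,r} \in [a_i,a_{i+1}]$. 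Corollary \ref{DerivCor} then gives $\Lip_\Berk(\Phi\vert_I) = \max_i \big(\sup_r |F_{\Phi,i}'(r)|\big)$, with $F_{\Phi,i}(r) = C_i r^{k_i}$ and the supremum taken over $r$ in a subinterval of $[B_0,1]$.

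The proof then finishes with a single uniform estimate: $|F_{\Phi,i}'(r)| = |k_i|\, C_i r^{k_i-1} = \delta_i\, \diam_G(\Phi(\zeta_{0,r}))/r$, and since $\diam_G$ takes values in $[0,1]$ while $r \ge B_0$ and $\delta_i \le d$, this is at most $d/B_0$; taking the maximum over $i$ and the supremum over $r$ yields the claim. I do not anticipate any real obstacle here --- in contrast to Cases 2 and 4, no information about the behavior of $\Phi$ outside the unit ball (and no appeal to $\GIR(\Phi)$ or to Lemma \ref{GrowthBound2}) is needed, precisely because $\diam_G(\Phi(\,\cdot\,)) \le 1$ holds automatically. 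The only points requiring care are the bookkeeping that each piece of the partition inherits the ``radial, $B_0$-limited'' hypotheses of Corollary \ref{DerivCor} (immediate, since $I$ itself has them), the verification that the radial coordinate genuinely stays in $[B_0,1]$, and the harmless $B_0 = 1$ boundary case.
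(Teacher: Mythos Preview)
Your proposal is correct and follows essentially the same approach as the paper: both partition $[\zeta_{0,B_0},\zeta_G]$ into pieces on which $\diam_G(\Phi(\zeta_{0,r}))$ is a power $C_i r^{k_i}$, differentiate, and bound by $|k_i|\cdot\diam_G(\Phi(\zeta_{0,r}))/r \le d/B_0$ using $\diam_G \le 1$ and $r \ge B_0$. The only cosmetic difference is that you invoke Lemma~\ref{DecompositionLemma} and Corollary~\ref{DerivCor} directly, whereas the paper rederives the same computation by writing $F_\Phi = E\circ M_i\circ L$ and applying the Chain Rule and Mean Value Theorem by hand; your packaging is arguably cleaner.
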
 

\begin{proof}  We use the fact that in the $\rho$-metric, along a given segment $\Phi$ locally scales distances 
by an integer $1 \le m \le d$.  To obtain the Lipschitz bound for the $d$-metric, 
we conjugate this between the $d$- and $\rho$-metrics.  
Fix a base $q > 1$ such that for each $0 < r \le 1$ we have $\rho(\zeta_G,\zeta_{0,r}) = -\log_q(r)$,
and put $E(z) = q^z$, $L(r) = \log_q(r)$.  Define $F_\Phi : (B_0,1] \rightarrow (0,1]$ by 
\begin{equation*} 
F_\Phi(r) \ = \ \diam_G\big(\Phi(\zeta_{0,r})\big) \ .
\end{equation*} 

Note that $\zeta_G = \zeta_{0,1}$.  Choose a partition $B_0 = r_1 < \cdots < r_{\ell+1} = 1$
of $[B_0,1]$ such that on each subinterval $[r_i,r_{i+1}]$, $\Phi$ has the following properties:  
\begin{enumerate}
\item $\Phi$ maps the segment $[\zeta_{0,r_i},\zeta_{0,r_{i_1}}]$ 
homeomorphically onto some radial segment;
\item there is an integer $1 \le m_i \le d$ such that  $\deg_\Phi(P) = m_i$ 
for all $P \in (\zeta_{0,r_i},\zeta_{0,r_{i+1}})$.  
\end{enumerate}
To prove the Proposition it suffices to show that   
$\Lip_\Berk\big(\Phi\vert_{[\zeta_{0,r_i},\zeta_{0,r_{i+1}}]}\big) \le d/B_0$ for each $i$. 

Fix $i$.  By (1) and (2) there is an affine function $M_i(y) = a_i y + b_i$, 
where $a_i = \pm m_i$ and $b_i \in \RR$, such that 
$\rho(\zeta_G,\Phi(\zeta_{0,r})) = M_i(L(r))$.  Hence for each $r \in [r_i,r_{i+1}]$,   
\begin{equation*}
F_\Phi(r) \ = \ E \circ M_i \circ L(r) \ .
\end{equation*} 
In particular, $F_\Phi$ is differentiable on $(r_i,r_{i+1})$.  
By the Mean Value Theorem, for each $r, s$ with $r_i \le r < s \le r_{i+1}$ 
there is an $r_* \in (r,s)$ such that 
\begin{equation*}
\frac{d(\Phi(\zeta_{0,r}),\Phi(\zeta_{0,s}))}{d(\zeta_{0,r},\zeta_{0,s})} 
\ = \ \left| \frac{F_\Phi(r)-F_\Phi(s)}{r-s} \right|\ = \ |F_\Phi^\prime(r_*)| \ ,  
\end{equation*} 
so it will be enough to show that $|F_\Phi^{\prime}(r)| \le d/B_0$ on $(r_i,r_{i+1})$.
However, this follows easily from the Chain rule:  for each $r \in (r_i,r_{i+1})$   
\begin{equation*}
|F_\Phi^\prime(r)| \ = \ \big(q^{M_i(L(r))} \cdot \ln(q)\big) \cdot |a_i| \cdot \frac{1}{r \cdot \ln(q)} 
\ = \ F_\Phi(r) \cdot \frac{m_i}{r} \ \le \ \frac{m_i}{r_i} \ \le \ \frac{d}{B_0} \ .
\end{equation*}
\end{proof} 

\begin{proof}[Proof of Theorem \ref{MainThm}]  
By Corollary \ref{SupCor}, it suffices to show that for each radial 
segment $I$ of the form $[\alpha,\xi]$ or $[\xi,\zeta_G]$, 
where $\alpha \in \PP^1(K)$ and $\diam_G(\xi) = B_0(\varphi)$, one has 
\begin{equation} \label{DesiredBound} 
\Lip_\Berk(\varphi\vert_I) \ \le \ \max\Big( \frac{1}{\GIR(\varphi) \cdot B_0(\varphi)^d} \, , 
                                  \frac{d}{\GIR(\varphi)^{1/d} \cdot B_0(\varphi)} \Big) \ .
\end{equation} 

First suppose $I = [\alpha,\xi]$.  By the definition of the ball-mapping radius $B_0 = B_0(\varphi)$, 
$\varphi(\cB(\alpha,B_0)^-)$ is a ball, and hence omits some $\beta \in \PP^1(K)$.  
Take any $\gamma_1 \in \GL_2(\cO)$ with $\gamma_2(\beta) = \infty$, and take any $\gamma_2 \in \GL_2(\cO)$ 
with $\gamma_2(0) = \alpha$.   
Put $\Phi = \gamma_1 \circ \varphi \circ \gamma_2$.  Then  $[\alpha,\xi] = \gamma_2([0,\zeta_{0,B_0}])$, 
$\Lip_\Berk(\varphi\vert_{[\alpha,\xi]}) =\Lip_\Berk(\Phi\vert_{[0,\zeta_{0,B_0}]})$, 
and $\Phi(\cD(0,B_0)^-)$ is a disc $\cD(c_0,R)^-$ for some $c_0 \in K$ and some $0 < R < \infty$. 
Propositions \ref{Case1}, \ref{Case2}, \ref{Case3}, and \ref{Case4}  cover all possibilities for $|c_0|$ and $R$, 
and they show that (\ref{DesiredBound}) holds.

Next suppose $I = [\xi,\zeta_G]$.  Take any type II point $\xi_0 \in (\xi,\zeta_G)$, and let $\alpha_0 \in \PP^1(K)$
be such that $\xi_0 \in [\alpha_0,\zeta_G]$.  Choose any $\gamma_2 \in \GL_2(\cO)$ with $\gamma_2(0) = \alpha_0$;  
then $[\xi_0,\zeta_G] \subset \gamma_2([\zeta_{0,B_0},\zeta_G])$. Put $\Phi = \varphi \circ \gamma_2$.  
Then $\Lip_\Berk(\varphi\vert_{[\xi_0,\zeta_G]}) \le \Lip_\Berk(\Phi\vert_{[\zeta_{0,B_0},\zeta_G]})$, and  
Proposition \ref{Case5} shows that $\Lip_\Berk(\Phi\vert_{[\zeta_{0,B_0},\zeta_G]})$ satisfies (\ref{DesiredBound}).
Since we can choose $\xi_0$ as close to $\xi$ as desired, $\Lip_\Berk(\varphi\vert_{[\xi,\zeta_G]})$ satisfies  
(\ref{DesiredBound}) as well. 
\end{proof} 

\begin{proof}[Proof of Theorem \ref{Classical_Lip}.]
Given $\varphi$, we first show that 
\begin{equation} \label{GIRsup} 
\sup_{\substack{x, y \in \PP^1(K) \\ x \ne y}} \frac{\|\varphi(x),\varphi(y)\|}{\|x,y\|} \ \le \ \frac{1}{\GPR(\varphi)} \ . 
\end{equation}

Fix $x, y \in \PP^1(K)$ with $x \ne y$. We claim that $\|\varphi(x),\varphi(y)\|/\|x,y\| \le 1/\GPR(\varphi)$.
If $\|x,y\| \ge \GPR(\varphi)$, the inequality is trivial since $\|\varphi(x),\varphi(y)\| \le 1$.   
Suppose $\|x,y\| <  \GPR(\varphi)$.  After pre-composing and post-composing $\varphi$ with suitable elements 
of $\GL_2(\cO)$, we can assume that $y = 0$ and $\varphi(y) = 0$.  Put $R = \GPR(\varphi)$.  By the definition
of $\GPR(\varphi)$, the image $\varphi(\cD(0,R)^-)$ omits $\zeta_G$.  
In particular, $\varphi$ has no poles in $D(0,R)^-$ and $|\varphi(z)| < 1$ for all $z \in D(0,R)^-$.  
Thus we can expand $\varphi(z)$ as a power series converging in $D(0,R)^-$,
\begin{equation*}
\varphi(z) \ = \ \sum_{i=0}^\infty c_i z^i \ ,
\end{equation*}
where $c_0 = 0$ and $|c_i| \le 1/R^i$ for $i \ge 1$.  Note that $\|x,y\| = |x-0| = |x|$, and that 
\begin{equation*}
\|\varphi(x),\varphi(y)\| \ = \ |\varphi(x)-0| \ = \ |\sum_{i=1}^\infty c_i x^i| 
\ \le \ \max_{i \ge 1} (|x|/R)^i \ = \ |x|/R \ . 
\end{equation*} 
It follows that $\|\varphi(x),\varphi(y)\|/\|x,y\| \le 1/R = 1/\GPR(\varphi)$. 

To complete the proof, we will show that there exist $x, y \in \PP^1(K)$ with 
\begin{equation*}
\frac{\|\varphi(x),\varphi(y)\|}{\|x,y\|} \  = \  \frac{1}{\GPR(\varphi)} \ .  
\end{equation*}   
Let $Q \in \varphi^{-1}(\zeta_G)$ be a point
(necessarily of type II) for which $\diam_G(Q) = \GPR(\varphi)$.  After post-composing $\varphi$ with a 
suitable element of $\GL_2(\cO)$, we can assume $Q = \zeta_{0,R}$;  by construction, $\varphi(Q) = \zeta_G$.
Consider the tangent space $T_Q$.  For any $\vw \in T_{\zeta_G}$, there are at most $d$ directions 
$\vv \in T_Q$ with $\varphi_*(\vv) = \vw$.  Also, there are only finitely many directions $\vv \in T_Q$ for which 
$\varphi(\cB(Q,\vv)^-) = \BPP^1_K$.  Hence, we can choose $\vv_1, \vv_2 \in T_Q \backslash \{\vv_\infty\}$ 
with $\varphi_*(\vv_1) \ne \varphi_*(\vv_2)$, such that $\varphi(\cB(Q,\vv_1)^-)$ and $\varphi(\cB(Q,\vv_2)^-)$ are balls.  
Take any $x \in \PP^1(K) \cap \cB(Q,\vv_1)^-$, $y \in \PP^1(K) \cap \cB(Q,\vv_2)^-$.  
Then $\|x,y\| = R$ and $\|\varphi(x),\varphi(y)\| = 1$.     
\end{proof} 

\begin{proof}[Proof of Theorem \ref{FirstCor}.]
In Theorem \ref{Classical_Lip} we have shown that $\Lip_{\PP^1(K)}(\varphi) \le 1/\GPR(\varphi)$. 
It follows from Corollary \ref{ResultantBoundCor} that $1/|\GPR(\varphi)| \le 1/|\Res(\varphi)|$, 
so $\Lip_{\PP^1(K)}(\varphi) \le 1/|\Res(\varphi)|$.

In Theorem \ref{MainThm0} we have shown that 
\begin{equation} \label{MainBound1}
\Lip_\Berk(\varphi) \ \le \ \max\Big( \frac{d}{\GIR(\varphi)^{1/d} \cdot B_0(\varphi)} \, ,  
                                         \frac{1}{\GIR(\varphi) \cdot  B_0(\varphi)^d} \Big) \ .
\end{equation} 
By Corollary  \ref{ResultantBoundCor} we have $\GIR(\varphi)^d \cdot B_0(\varphi) \ge |\Res(\varphi)|$.  Since  
$1 \ge \GIR(\varphi) > 0$ it follows that $\GIR(\varphi)^{1/d} \ge \GIR(\varphi)^d$, which yields 
$\GIR(\varphi)^{1/d} \cdot B_0(\varphi) \ge |\Res(\varphi)|$.  Similarly 
$\GIR(\varphi)\cdot B_0(\varphi) \ge |\Res(\varphi)|$ and $B_0(\varphi) \ge |\Res(\varphi)|$, so 
$\GIR(\varphi) \cdot  B_0(\varphi)^d \ge |\Res(\varphi)|^d$.  Thus 
$\Lip_\Berk(\varphi) \ \le \ \max\big( d/|\Res(\varphi)|, 1/|\Res(\varphi)|^d\big)$.
\end{proof}

\section{Examples} \label{ExamplesSection}

An analysis of the proof of Theorem \ref{MainThm} leads to the following examples, 
which show the bound (\ref{MainBound}) in Theorem \ref{MainThm} is nearly optimal.  

\medskip
\noindent{\bf Example 1.}  Let $2 \le d \in \ZZ$, and fix $S \in |K^{\times}|$ with $0 < S \le 1$. 
Choose $\beta_1, \ldots, \beta_d \in K$
with $|\beta_i| = S$ for all $i$ and $|\beta_i - \beta_j| = S$ for all $i \ne j$.   
Fix an integer $1 \le k \le d-1$ and a constant $C \in K$ with $|C| \ge 1$, and put 
\begin{equation*}
\varphi(z) \ = \ \frac{Cz^k}{(z-\beta_1) \cdots (z-\beta_d)} \ .
\end{equation*} 
One sees easily that $\varphi(\zeta_G) = \zeta_{0,|C|}$, so $\GIR(\varphi) = 1/|C|$.  
Using the theory of Newton polygons, one sees that for each $a \in \PP^1(K)$,
the image of $B(a,S)^-$ omits at least one point of $\PP^1(K)$, 
and that $\varphi(\cD(0,S)^-) = \cD(0,|C|/S^d)^-$ but $\varphi(\cD(0,S)) = \PP^1(K)$.  
Thus $B_0(\varphi) = S$. For $0 \le r \le S$ one has 
\begin{equation*}
f_\varphi(r) \ := \ \diam_\infty(\varphi(\zeta_{0,r})) \ = \ |C| r^k/S^d \ .
\end{equation*} 
Put $r_1 = (S^d/|C|)^{1/k} \le S$;  then $f_\varphi(r_1) = 1$, and 
\begin{equation*}
\Lip_\Berk(\varphi) \ \ge \ f_\varphi^{\prime}(r_1) \ = \ k \cdot |C| r_1^{k-1}
\ = \ \frac{k}{r_1} \ = \ \frac{k}{\GIR(\varphi)^{1/k} \cdot B_0(\varphi)^{d/k}} \ .
\end{equation*}  
Taking $k = 1$, one sees that the first term in (\ref{MainBound}) cannot be improved.  
Taking $k = d-1$, one obtains a quantity which differs from the second term 
$d/\big(\GIR(\varphi)^{1/d} \cdot B_0(\varphi)\big)$ by a factor 
$\Delta$ satisfying $(d-1)/d < \Delta < 1$.

\medskip
\noindent{\bf Example 2.} With $d$, $S$, $C$ and the $\beta_i$ as in Example 1, put 
\begin{equation*}
\varphi(z) \ = \ \frac{C z^d}{(z-\beta_1) \cdots (z-\beta_{d-1})} \ .
\end{equation*}
As before, one has $\GIR(\varphi) = 1/|C|$ and $B_0(\varphi) = S$.  For $0 \le r \le S$ one has 
\begin{equation*}
f_\varphi(r) \ := \ \diam_\infty(\varphi(\zeta_{0,r})) \ = \ |C| r^d/S^{d-1} \ .
\end{equation*} 
Put $r_1 = (S^{d-1}/|C|)^{1/d} \le S$;  then $f_\varphi(r_1) = 1$, and 
\begin{equation*}
\Lip_\Berk(\varphi) \ \ge \ f_\varphi^{\prime}(r_1) \ = \ d \cdot |C| r_1^{d-1}
\ = \ \frac{d}{r_1} \ = \ \frac{d}{\GIR(\varphi)^{1/d} \cdot B_0(\varphi)^{(d-1)/d}} \ ,
\end{equation*}  
which differs from $d/\big(\GIR(\varphi)^{1/d} \cdot B_0(\varphi)\big)$ by the factor 
$\Delta = B_0(\varphi)^{1/d}$.  

Thus the second term in $(\ref{MainBound})$ cannot be greatly improved, and when $B_0(\varphi) = 1$ it is sharp.

\end{document}